\documentclass[11pt,  reqno]{amsart}
\usepackage{amsmath,amssymb,amscd,amsthm}

\usepackage{color}
\usepackage{hyperref}

\usepackage{cases}

\usepackage{bm}
\usepackage{comment}

\allowdisplaybreaks[3]

\newtheorem{theorem}{Theorem} [section]

\newtheorem{lemma}[theorem]{Lemma}
\newtheorem{proposition}[theorem]{Proposition}
\newtheorem{remark}[theorem]{Remark}
\newtheorem{definition}[theorem]{Definition}
\newtheorem{corollary}[theorem]{Corollary}

\DeclareMathOperator*{\supp}{supp}

\DeclareMathOperator*{\esssup}{ess \ sup}

\newcommand{\noi}{\noindent}
\newcommand{\Z}{\mathbb{Z}}
\newcommand{\R}{\mathbb{R}}
\newcommand{\C}{\mathbb{C}}
\newcommand{\T}{\mathbb{T}}

\newcommand{\al}{\alpha}
\newcommand{\be}{\beta}
\newcommand{\dl}{\delta}

\newcommand{\eps}{\varepsilon}

\newcommand{\G}{\Gamma}

\newcommand{\Ld}{\Lambda}
\newcommand{\s}{\sigma}

\newcommand{\ft}{\widehat}
\newcommand{\Ft}{{\mathcal{F}}}
\newcommand{\wt}{\widetilde}
\newcommand{\cj}{\overline}
\newcommand{\dx}{\partial_x}

\newcommand{\dt}{\partial_t}

\newcommand{\pd}{\partial}

\newcommand{\ta}{\theta}

\renewcommand{\l}{\ell}

\newcommand{\les}{\lesssim}
\newcommand{\ges}{\gtrsim}

\newcommand{\jb}[1]
{\langle #1 \rangle}

\newcommand{\Nl}{\mathcal{N}}

\newcommand{\N}{\mathbb{N}}

\newcommand{\Al}{\mathcal A}
\newcommand{\Gl}{\mathcal G}

\newcommand{\fR}{\mathfrak R}

\newcommand{\kk}{\kappa}

\newcommand{\I}{\mathrm{I}}
\newcommand{\II}{\mathrm{I \! I}}

\newcommand{\mm}{{\bf m}}

\newcommand{\wsl}{\overset{*}{\rightharpoonup}}

\numberwithin{equation}{section}
\numberwithin{theorem}{section}

\let\Re=\undefined\DeclareMathOperator*{\Re}{Re}
\let\Im=\undefined\DeclareMathOperator*{\Im}{Im}

\date{\today}

\begin{document}
\baselineskip = 14pt

\title[A sharp criterion for semilinear Schr\"odinger equations]
{A sharp criterion for well-posedness
\\
of semilinear Schr\"odinger equations
\\
on the real line}

\author[T.~Kondo and M.~Okamoto]
{Toshiki Kondo and Mamoru Okamoto}

\address{
Toshiki Kondo\\
Department of Mathematics\\\
Graduate School of Advanced Science and Engineering\\
Hiroshima University\\
\hfill\break 1-3-1 Kagamiyama, Higashi-Hiroshima, 739-8526 Japan}
\email{d263049@hiroshima-u.ac.jp}

\address{
Mamoru Okamoto\\
Department of Mathematics\\
Graduate School of Advanced Science and Engineering\\
Hiroshima University\\
1-3-1 Kagamiyama,
Higashi-Hiroshima, 739-8526
Japan}
\email{mokamoto@hiroshima-u.ac.jp}

\subjclass[2020]{35Q55}

\keywords{Schr\"odinger equation; well-posedness; ill-posedness;
Mizohata condition; wave packet}

\begin{abstract}
We consider the Cauchy problem for semilinear Schr\"odinger equations on the real line.
We establish a sharp necessary and sufficient condition on the nonlinearity for local well-posedness in Sobolev spaces of regularity greater than five halves.
This criterion is formulated in terms of the Mizohata condition and is equivalent to explicit algebraic conditions on the nonlinearity.
To prove sufficiency,
we combine energy estimates with a gauge transformation adapted to the bounded primitive arising from the Mizohata condition.
To prove necessity,
we show that the failure of the condition yields a dense set of initial data for which no solution exists on any positive time interval.
\end{abstract}

\maketitle

\tableofcontents

\newpage

\section{Introduction}
\label{sec:intro}

We consider the following Cauchy problem for semilinear Schr\"odinger equations:
\begin{equation}
\left\{
\begin{aligned}
&\dt u + i \dx^2 u = F(u,\dx u,\cj u,\cj{\dx u}),\\
&u|_{t=0} = \phi,
\end{aligned}
\right.
\label{NLS}
\end{equation}
where
$u = u(t,x) :\R \times \R \to \C$ is an unknown function,
$\phi : \R \to \C$ is a given function,
and
$F\in C^\infty(\C^2;\C)$.%
\footnote{Throughout this paper,
$F \in C^\infty(\C^2;\C)$
means that
$F (\al,\be, \cj \al, \cj \be)$ is smooth with respect to $\Re \al$, $\Im \al$, $\Re \be$, and $\Im \be$
for $\al, \be \in \C$.
}
We write $F$ in the form
$F(\al,\be,\cj\al,\cj\be)$,
where
$\cj \al$ denotes the complex conjugate of $\al \in \C$.

For $\al = a + ib \in \C$ ($a,b \in \R$), we write
\[
\frac {\pd F}{\pd \al} := \frac 12 \Big(\frac{\pd F}{\pd a} - i \frac {\pd F}{\pd b} \Big), \quad
\frac {\pd F}{\pd \cj \al} := \frac 12 \Big(\frac{\pd F}{\pd a} + i \frac {\pd F}{\pd b} \Big).
\]
For simplicity, we use the following notation:
\[
F_\al = \frac {\pd F}{\pd \al}, \quad F_\be = \frac{\pd F}{\pd \be},
\quad F_{\cj \al} = \frac {\pd F}{\pd \cj \al}, \quad \cdots
\]
We abbreviate $F(0,0,0,0)$ to $F(0)$.

In the linear setting, let $a$ and $b$ be smooth complex-valued functions on $\R$ with all derivatives bounded.
The Cauchy problem
\[
\left\{
\begin{aligned}
&\dt u + i \dx^2 u = a(x)u + b(x)\dx u, \\
&u|_{t=0}=\phi
\end{aligned}
\right.
\]
is well-posed in $L^2(\R)$ if and only if
\[
\sup_{x\in\R}
\bigg| \int_0^x \Im b(y) dy \bigg| < \infty.
\]
This condition is known as the Mizohata condition.
See \cite{Mi85}; see also \cite{Tak80, Ich84}.

When $F$ is a polynomial consisting only of terms of degree at least three,
Kenig, Ponce, and Vega \cite{KPV93} proved small-data local well-posedness of \eqref{NLS} in $H^s(\R)$ for $s \ge \frac 72$.
Hayashi and Ozawa \cite{HaOz94} removed the smallness assumption on the initial data and improved the regularity threshold from $s \ge \frac 72$ to $s \ge 3$.
In addition, Pornnopparath \cite{Por18} showed small-data local well-posedness in $H^s(\R)$ for $s\ge \frac 32$.
For
$F\in C^\infty(\C^2;\C)$ satisfying
\[
F(\al, \be, \cj \al, \cj \be) = O (|\al|^3 + |\be|^3)
\]
near the origin,
Chihara \cite{Chi98} showed local existence and uniqueness in $H^m(\R)$ for integers $m\ge4$.
For further results on derivative nonlinear Schr\"odinger equations,
see \cite{Taka99, CKSTT02, AmSi15, HaOz16, HGKNV26} and the references therein.

On the other hand,
when the quadratic part of the Taylor expansion of $F$ at the origin does not vanish,
the situation becomes more delicate.
Indeed, Christ \cite{Chr} proved that the Cauchy problem for
\[
\dt u + i \dx^2 u = u \dx u
\]
is ill-posed in $H^s(\R)$ for every $s \in \R$.
See also \cite{KoOk26}.
In contrast, Gr\"unrock \cite{Gr00} showed that the Cauchy
problem for
\[
\dt u + i \dx^2 u = \cj{u} \dx \cj{u}
\]
is globally well-posed in $H^s(\R)$ for $s \ge 0$.
Moreover, Akase \cite{Ak25} showed that, for $c \in \C$, the Cauchy problem for
\[
\dt u + i \dx^2 u = i \dx (c u^2 - 2 \cj c |u|^2)
\]
is globally well-posed in $H^s(\R)$ for $s \ge 1$.
These results demonstrate that well-posedness in Sobolev spaces depends crucially on the structure of the quadratic derivative nonlinearity.

For more general quadratic derivative nonlinearities, 
local well-posedness has also been studied in weighted Sobolev spaces \cite{KPV93,Chi94,HaOz94,Chi98}.
Moreover,
for a certain class of quadratic derivative nonlinearities,
Ozawa \cite{Oz98}
proved local well-posedness
for initial data $f\in H^1(\R)$ satisfying
\[
\sup_{x \in \R} \Big| \int_0^x f (y) dy \Big| < \infty,
\quad
\lim_{x \to -\infty} \int_0^x f(y) dy
\text{ exists in } \C.
\]
For further results,
see \cite{HaNa02, St07, Ak25}.
For the quasilinear case, see \cite{KPV04, HGM22, IfTa25, JeOh26, ShZh26} and the references therein.

The aim of this paper is to establish a necessary and sufficient condition on the nonlinearity $F$ for the Cauchy problem \eqref{NLS} to be well-posed in the Sobolev space $H^s(\R)$ for $s>\frac52$.
Our result thus provides a semilinear analogue of the Mizohata condition.

A natural nonlinear analogue of the Mizohata condition is
\begin{equation}
\sup_{x\in\R}
\bigg|
\int_0^x
\Im F_\be
(\phi,\dx\phi,\cj\phi,\cj{\dx\phi})(y) dy
\bigg|
<\infty
\label{Fbez}
\end{equation}
for every $\phi\in H^s(\R)$.
For $s>\frac32$, we have
\[
\phi,\dx\phi\in L^2(\R)\cap L^\infty(\R).
\]
Hence, the quadratic remainder in the Taylor expansion of $F_\be$ at the origin is integrable.
Moreover, the terms linear in $\dx \phi$ and $\cj{\dx \phi}$ have bounded primitives.
Thus,
whether \eqref{Fbez} holds depends only on the constant term and the terms linear in $\phi$ and $\cj \phi$.
Consequently,
\eqref{Fbez} holds for every
$\phi\in H^s(\R)$ if and only if
\[
\Im F_\be(0)=0,
\quad
F_{\cj\al\be}(0)=\cj{F_{\al\be}(0)}.
\]
See Proposition \ref{prop:Miz} for details.
Our main result shows that these algebraic conditions, together with $F(0)=0$,
characterize local well-posedness.

Before stating the main theorem, we introduce the notions of a solution and local well-posedness.
In what follows,
we restrict our attention to solutions of \eqref{NLS} for nonnegative times,
since the case of negative times can be treated similarly.

\begin{definition}
\label{def:sol}
Let $s>\frac32$, $T>0$, and $\phi\in H^s(\R)$.
We say that $u$ is a solution to \eqref{NLS} in $H^s(\R)$
on $[0,T]$ if the following two conditions hold:
\begin{enumerate}
\item[(i)]
$u\in L^\infty([0,T];H^s(\R))$.

\item[(ii)]
For every $\chi\in C_c^\infty([0,T)\times\R)$,%
\footnote{Here, $C_c^\infty([0,T)\times\R)$ denotes the space
of functions that are smooth up to $t=0$ and have compact support
in $[0,T)\times\R$. Such functions need not vanish at $t=0$.}
we have
\begin{align*}
&-\int_0^T\int_\R u(t,x)\dt\chi(t,x) dx dt
-\int_\R\phi(x)\chi(0,x) dx
\\
&\quad
+i\int_0^T\int_\R u(t,x)\dx^2\chi(t,x) dx dt
\\
&=
\int_0^T\int_\R
F(u,\dx u,\cj u,\cj{\dx u})(t,x)\chi(t,x) dx dt.
\end{align*}
\end{enumerate}
\end{definition}

Condition (ii) in Definition \ref{def:sol} incorporates both
the equation and the initial condition in the distributional sense.

\begin{remark}
\label{rem:sold}
\rm

Assume that $F(0)=0$, and let $u$ be a solution in the sense of
Definition \ref{def:sol}.
Lemma \ref{lem:comp} gives
\begin{equation}
F(u,\dx u,\cj u,\cj{\dx u})
\in L^\infty([0,T];H^{s-1}(\R)).
\label{eq:Fnd}
\end{equation}
The equation therefore yields
\[
\dt u\in L^\infty([0,T];H^{s-2}(\R)).
\]
Hence, $u$ has an $H^{s-2}(\R)$-valued absolutely continuous
representative.
Using the weak formulation, including its initial term, we obtain the
Duhamel formula
\begin{equation}
u(t)
=
e^{-it\dx^2}\phi
+
\int_0^t e^{-i(t-t')\dx^2}
F(u,\dx u,\cj u,\cj{\dx u})(t') dt'
\label{eq:Duh}
\end{equation}
as an identity in $H^{s-1}(\R)$ for almost every $t\in[0,T]$.
The integral is understood as a Bochner integral in $H^{s-1}(\R)$.

Throughout this paper, we use the right-hand side of \eqref{eq:Duh} as
the representative of $u$.
By \eqref{eq:Fnd}, this representative belongs to
$C([0,T];H^{s-1}(\R))$.
Together with the $L^\infty([0,T];H^s)$ bound, this implies that it
belongs to $C_w([0,T];H^s(\R))$.
Here, $C_w([0,T];H^s(\R))$ denotes the space of weakly continuous
$H^s(\R)$-valued functions on $[0,T]$.
With this convention, \eqref{eq:Duh} holds for every $t\in[0,T]$ and
$u(0)=\phi$.
\end{remark}

We use the following notion of local well-posedness.

\begin{definition}
\label{def:wp}
For $s>\frac32$, we say that \eqref{NLS} is locally well-posed
in $H^s(\R)$ if, for every $\psi\in H^s(\R)$, there exist
$T>0$ and an open neighborhood $U$ of $\psi$ in $H^s(\R)$
such that, for each $\phi\in U$, there is a unique solution
$u^{[\phi]}$ to \eqref{NLS} on $[0,T]$ with initial datum $\phi$
in the sense of Definition \ref{def:sol}, and the solution map
\[
U\ni\phi\mapsto u^{[\phi]}
\in L^\infty([0,T];H^s(\R))
\]
is continuous.
\end{definition}

\begin{remark}
\rm

Assume that $F(0)=0$ and that \eqref{NLS} is locally well-posed
in $H^s(\R)$.
Let $T$ and $U$ be as in Definition \ref{def:wp}.
For each $\phi\in U$, the representative of $u^{[\phi]}$
specified in Remark \ref{rem:sold} belongs to
$C([0,T];H^s(\R))$.
We give a proof of this fact in Appendix \ref{SEC:ASCT} for completeness.
Moreover, the solution map
\[
U\ni\phi
\mapsto u^{[\phi]}
\in C([0,T];H^s(\R))
\]
is continuous.
\end{remark}

We are now ready to state our main result.

\begin{theorem}
\label{thm:main}
Let $s>\frac52$ and let $F\in C^\infty(\C^2;\C)$.
The following are equivalent.
\begin{enumerate}
\item
The Cauchy problem \eqref{NLS} is locally well-posed in $H^s(\R)$.

\item
There exists a nonempty open ball $B\subset H^s(\R)$ such that,
for every initial datum $\phi\in B$,
there exist $T_\phi>0$ and a solution to \eqref{NLS} in $H^s(\R)$ on $[0,T_\phi]$
in the sense of Definition \ref{def:sol}.

\item
The following three conditions hold:
\begin{equation}
F(0)=0,
\quad
\Im F_\be(0)=0,
\quad
F_{\cj\al\be}(0)=\cj{F_{\al\be}(0)}.
\label{eq:condition}
\end{equation}
\end{enumerate}

If any of the conditions in \eqref{eq:condition} fails, then, for every
$\psi\in H^s(\R)$ and every $\eps>0$, there exists
$\phi\in H^s(\R)$ such that
\[
\|\phi-\psi\|_{H^s}<\eps
\]
and
\eqref{NLS} with initial datum $\phi$ admits no solution
$u\in C_w([0,T];H^s(\R))$ for any $T>0$.
\end{theorem}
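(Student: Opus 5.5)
The plan is to prove the cycle (1) $\Rightarrow$ (2) $\Rightarrow$ (3) $\Rightarrow$ (1). The implication (1) $\Rightarrow$ (2) is trivial: take $B$ to be a ball inside the neighborhood $U$ of any $\psi$. For (2) $\Rightarrow$ (3) we argue by contraposition and prove the stronger final statement of the theorem. If any condition in \eqref{eq:condition} fails, every ball contains data with no solution in $C_w([0,T];H^s)$ for any $T>0$. By Remark \ref{rem:sold}, every solution in the sense of Definition \ref{def:sol} has such a representative, so this contradicts (2).

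For (3) $\Rightarrow$ (1), write $u_1 = \dx u$ and differentiate the equation. The top-order part of the linearized operator acting on $\dx^k u$ is
\[
\dt + i\dx^2 - F_\be \dx - F_{\cj\be}\,\overline{\dx}\,,
\]
with coefficients evaluated at $(u,\dx u,\cj u,\cj{\dx u})$. The conjugate term $F_{\cj\be}\cj{\dx^{k+1}u}$ is handled in the standard way: diagonalize the system for $(\dx^k u,\cj{\dx^k u})$ using an operator of order $-1$, such as $F_{\cj\be}\,\dx^{-1}$ adjusted by the symbol $i\xi/(2\xi^2)$ away from low frequencies. The real part of $F_\be$ gives a harmless commutator after integration by parts, since $\Re F_\be \dx$ is skew up to zero order. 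The imaginary part $\Im F_\be$ is the Mizohata term.

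We expand $\Im F_\be$ at the origin. Under \eqref{eq:condition}, the constant term vanishes and the linear terms in $u,\cj u$ combine into $\Im(\text{const}\cdot u)$-type expressions. The condition $F_{\cj\al\be}(0)=\cj{F_{\al\be}(0)}$ makes the $u$-linear part of $\Im F_\be$ equal to $\Im(F_{\al\be}(0)u)+\Im(F_{\cj\al\be}(0)\cj u)=\Im(2F_{\al\be}(0)u)$ up to sign. This is not a derivative, so it must be treated as in Proposition \ref{prop:Miz}. The key point is that the real-valued function $\Phi(t,x)=\int_{-\infty}^x \Im F_\be(\dots)\,dy$ is bounded, with $L^\infty$ norm controlled by $\|u\|_{H^s}$. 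This requires a cancellation: the linear-in-$u$ part must be a real combination that can be handled via the equation, as Ozawa does. Given this boundedness, the gauge $w=e^{-\Phi/2}\dx^k u$ (sign chosen appropriately) removes $\Im F_\be\dx$ in the energy identity, producing only lower-order terms. The term $\dt\Phi$ is controlled because $\dt u=-i\dx^2u+F$, and $\int^x \dx^2 u$ is a derivative, so $\dt\Phi$ is bounded with $k\ge 3$ derivatives available. This is where $s>5/2$ enters.

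We then obtain a priori bounds in $H^s$ via the gauged energy. For fractional $s$ we use $\jb{\dx}^{s-1}$ applied to $\dx u$ with commutator estimates. Existence follows by parabolic regularization $\dt u+i\dx^2u-\eps\dx^2 u=F$ and compactness. Uniqueness and continuous dependence come from the same gauged estimate at a lower regularity ($H^{s-1}$ difference estimates) combined with a Bona–Smith argument.

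For the ill-posedness direction there are three cases. If $F(0)\neq0$, a constant forcing term is not in $L^2$, so $u(t)\notin H^s$ for $t>0$; this follows by testing the weak formulation against a spread-out cutoff. Otherwise $\Im F_\be$ has a nonzero constant term or a nonzero linear part whose primitive is unbounded for suitable $\phi$. We perturb $\psi$ by a small $\phi_0$ with $\int_0^x\Im F_\be\to\infty$ logarithmically; in the linear case, take $\phi_0$ with $\Im(c\phi_0)\sim \eps\jb{x}^{-1}$, which is in $H^s$. We then add wave packets at high frequency $N$ travelling through the region where the Mizohata primitive grows. Along the bicharacteristic $x=x_0+2Nt$, the packet amplitude is multiplied by $\exp(\int\Im F_\be)$, which makes the $H^s$ norm blow up instantly. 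To get a dense set of bad data, we sum infinitely many packets with frequencies $N_j\to\infty$ and rapidly decaying amplitudes, chosen so that for every $T>0$ some packet grows by more than any bound. A solution in $C_w H^s$ would then violate the $L^\infty_t H^s$ bound.

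The main obstacle is making this rigorous for a nonlinear solution, which is only assumed to exist and whose regularity is limited. I would test the solution against an approximate solution of the adjoint linearized equation, namely a backward wave packet concentrated near the bicharacteristic, and use the Duhamel formula \eqref{eq:Duh}. Nonlinear error terms are controlled by the a priori $L^\infty H^s$ bound, which is small on short times. Making the growth factor dominate these errors uniformly in small $T$ is the delicate step.
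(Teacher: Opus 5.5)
Your overall architecture matches the paper's: a gauge plus energy estimates, parabolic regularization and Bona--Smith for sufficiency, and adjoint wave packets for necessity. However, two steps are not right as stated.

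First, the algebra at the crux of the sufficiency part is wrong. Under $F_{\cj\al\be}(0)=\cj{F_{\al\be}(0)}$, the $u$-linear part of $\Im F_\be$ is $\Im\big(F_{\al\be}(0)u+\cj{F_{\al\be}(0)u}\big)=0$. It vanishes identically; it does not equal $2\Im(F_{\al\be}(0)u)$. This cancellation is precisely why the gauge is bounded (Proposition \ref{prop:Miz}). What remains of $\Im F_\be$ is $\dx\Im\big(F_{\be\be}(0)u+F_{\cj\be\be}(0)\cj u\big)$ plus a remainder quadratic in $(u,\dx u)$, which lies in $L^1$ for $s>\frac32$. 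So $\|\Phi\|_{L^\infty}\le C(\|u\|_{H^s})$ follows with no use of the equation. The mechanism you propose instead, a surviving term $\Im(cu)$ with $c\ne0$ ``handled via the equation, as Ozawa does,'' cannot produce such a bound. Ozawa's result needs the extra hypothesis that the primitive of the datum is bounded. Moreover, your own necessity paragraph, like the proof of Proposition \ref{prop:Miz}, rests on the fact that a nonzero $u$-linear part has an unbounded primitive for some data in every ball. Your bound on $\dt\Phi$ also works only after this cancellation: otherwise $\dt\int_0^x\Im(cu)\,dy$ contains $\int_0^x\Im\big(c(F_\al(0)u+F_{\cj\al}(0)\cj u)\big)dy$, which is unbounded in general. (Diagonalizing the $F_{\cj\be}\cj{\dx w}$ term is a valid but unnecessary alternative. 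The identity $(b\,\cj{\dx f},f)_{L^2}=-\frac12\int(\dx b)\cj{f}^{\,2}dx$ already makes that term harmless, which is Lemma \ref{lem:ened}(ii).)

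Second, in the necessity direction you leave the delicate step open, and the difficulty is not that nonlinear errors must be small on short time intervals. Pair $w_N=\dx^2u_N$ with an adjoint packet $h_N$ at frequency $N$, normalized in $H^{-(s-2)}$. The term $(b_0\,\dx\cj{w_N},h_N)_{L^2}$ coming from $F_{\cj\be}\cj{\dx w}$ is a priori only $O(N)$. Over the transit time $T_N\sim L/N$ it therefore contributes $O(L)$, which swamps both the initial pairing of size $\eta$ and the decay of $h_N(T_N)$. The missing idea is non-resonance: $\cj{w_N}$ carries the opposite Schr\"odinger phase, so $\frac{d}{dt}(b_0\cj{v_N},h_N)_{L^2}=2i(b_0\dx\cj{w_N},h_N)_{L^2}+O(1)$. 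Integrating in time then bounds this contribution by $O(N^{-1}+T_N)$, since the boundary terms are controlled by $\|h_N\|_{H^{-(s-1)}}\les N^{-1}$.

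The coefficient error $((a_N-a_0)\dx w_N,h_N)_{L^2}$ is also of size $N\|a_N-a_0\|_{L^\infty}$. It is handled not by the a priori bound $M$ but by $\|u_N(t)-\phi_N\|_{H^{s-1}}\les t^{1/2}$ together with $\|\kk_N\|_{W^{1,\infty}}\les N^{1-s}$. This gives a total error $O(NT_N^{3/2})$ and forces a balance of parameters such as $T_N=L/(2N)$, $N=L^4$.

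Finally, your infinite superposition of packets for the density claim is only sketched. It would require controlling cross-terms between packets against the unknown bound of a hypothetical solution. The paper sidesteps this by proving, with one packet at a time, that each $\Al_s(T,M)$ is closed (Lemma \ref{lem:ATM-closed}) and has empty interior (Theorem \ref{thm:necessity}), and then applying the Baire category theorem.
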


Condition (ii) in Theorem \ref{thm:main} requires only the existence of solutions for all initial data in a single nonempty open ball.
In particular, neither uniqueness nor continuous dependence is required,
and the lifespan may depend on the initial datum.
Nevertheless, this weak existence assumption already forces the full structural condition \eqref{eq:condition} on the nonlinearity.

The assumption in condition (ii) cannot, in general, be replaced by existence on a dense subset of $H^s(\R)$.
Indeed, consider
\[
F(\al,\be,\cj\al,\cj\be)=i\be.
\]
Then
\[
F(0)=0,
\quad
\Im F_\be(0)=1,
\]
so \eqref{eq:condition} fails.
On the other hand, if $\ft \phi$ has compact support,
then \eqref{NLS} admits a global solution in $H^s(\R)$,
given by
\[
\widehat{u}(t,\xi)
=
e^{it\xi^2-t\xi}\widehat{\phi}(\xi).
\]
Since such initial data are dense in $H^s(\R)$, existence on a dense
subset alone does not imply the structural condition in
Theorem \ref{thm:main}.

We now outline the proof of Theorem~\ref{thm:main}.
Since (i)$\implies$(ii) is immediate,
it suffices to prove (ii)$\implies$(iii) and (iii)$\implies$(i).

The proof of (iii)$\implies$(i) is based on the energy method.
However, a direct application of this method leads to a loss of derivatives.
To overcome this difficulty, we set
\[
v := \dx u,
\quad
w := \dx^2 u.
\]
Differentiating \eqref{NLS} twice, we obtain
\begin{equation}
\begin{aligned}
\dt w + i \dx^2 w
&=
i (\Im F_\be (u, v, \cj u, \cj v)) \dx w
+ (\Re F_\be (u, v, \cj u, \cj v)) \dx w
\\
&\quad
+ F_{\cj \be}(u, v, \cj u, \cj v) \cj {\dx w}
\\
&\quad
+ \text{(terms involving no derivatives of $w$ or $\cj w$)}.
\end{aligned}
\label{eqw1}
\end{equation}
The first term on the right-hand side causes a loss of derivatives,
whereas the other terms can be handled by the energy method.

To eliminate the troublesome term, we apply a gauge transformation.
For related gauge transformations,
see \cite{HaOz92, Taka99}.
Thanks to \eqref{Fbez}, we can define the bounded real-valued function
\[
\Ld(t,x)
:=
- \frac 12 \int_0^x
\Im F_\be (u, v, \cj u, \cj v)(t,y) dy.
\]
Setting
\[
W := e^\Ld w,
\]
we observe that
\[
F_\be(u,v,\cj u,\cj v)+2i\dx\Ld
=
\Re F_\be(u,v,\cj u,\cj v).
\]
Consequently, the equation for $W$ takes the form
\[
\begin{aligned}
\dt W + i \dx^2 W
&=
(\Re F_\be (u, v, \cj u, \cj v)) \dx W
+ F_{\cj \be}(u, v, \cj u, \cj v) \cj {\dx W}
\\
&\quad
+ (\dt \Ld + i \dx^2 \Ld) W
+ \text{(other terms)}.
\end{aligned}
\]
Although $\dt \Ld$ need not belong to $L^2(\R)$,
the term $(\dt \Ld) W$ can be controlled using \eqref{NLS} and the algebraic conditions in \eqref{eq:condition}. 
See Lemma~\ref{lem:remain}.
Since the coefficient of $\dx W$ is real-valued,
the remaining first-order terms involving $\dx W$ and $\cj{\dx W}$ can be controlled by the energy estimates in Lemma \ref{lem:ened}.

To construct a solution to \eqref{NLS} rigorously,
we introduce a parabolic regularization of \eqref{NLS}.
We then establish uniform energy estimates for the regularized solutions, independent of the regularization parameter.
Passing to the limit, we obtain a solution to \eqref{NLS}.
Finally, continuous dependence on the initial data in $H^s(\R)$ is established by a Bona-Smith-type approximation argument
as in \cite{BoSm75}.

Our energy estimate for $W$ requires $s>\frac52=2+\frac12$,
since we use the Sobolev embedding
$\| W \|_{L^\infty}\les \|W\|_{H^{s-2}}$ for $s-2>\frac12$.
Although this regularity threshold is not optimal
(see, e.g., \cite{Por18} for nonlinearities of cubic or higher order),
we do not pursue lower regularity here,
since our focus is the algebraic characterization of well-posedness.

We next outline the proof of (ii)$\implies$(iii).
We argue by contraposition.
If $F(0)\ne0$, the fact that a nonzero constant function does not belong to $H^s(\R)$ allows us
to show that \eqref{NLS} admits no $H^s(\R)$-solution for any initial datum in $H^s(\R)$.
See Proposition \ref{prop:F0}.

Assume that $F(0)=0$ and that at least one of the other two conditions in \eqref{eq:condition} fails.
Our argument is based on the wave-packet method used to establish the necessity of the Mizohata condition \cite{Mi85};
see also \cite{JeOh26}.
We prove that,
for any fixed existence time and any prescribed bound on the solutions,
the set of initial data admitting solutions with these bounds has empty interior in $H^s(\R)$
(see Theorem~\ref{thm:necessity}).
Combining this with the closedness of these sets and the Baire category theorem,
we obtain the non-existence result for a dense subset of $H^s(\R)$ stated in Theorem \ref{thm:main}.

We now explain the main idea of the wave-packet argument.
The failure of \eqref{eq:condition} allows us to construct
a smooth function $\psi$, arbitrarily close to any given
$\phi\in H^s(\R)$, such that
\[
a_0(x)
:=
F_\be(\psi,\dx\psi,\cj\psi,\cj{\dx\psi})
\]
does not satisfy the Mizohata condition.
In view of \eqref{eqw1}, we consider the model equation
\[
\dt w=(-i\dx^2+a_0\dx)w.
\]
For a smooth function $h$, we formally have
\begin{equation}
\frac d{dt}(w,h)_{L^2}
=
(w,(\dt+(-i\dx^2+a_0\dx)^\ast )h)_{L^2}
+\text{(error terms)},
\label{wp1a}
\end{equation}
where $(-i\dx^2+a_0\dx)^\ast $ denotes the formal adjoint of $-i\dx^2+a_0\dx$ in $L^2(\R)$.
We therefore construct a high-frequency wave packet $h$ satisfying
\[
\dt h+(-i\dx^2+a_0\dx)^\ast h
=\text{(error)}.
\]
By exploiting the failure of the Mizohata condition,
we can choose $h$ so that its $H^{-(s-2)}(\R)$-norm becomes small as it propagates through a suitable spatial interval.

More precisely, assuming the existence of solutions with a uniform $H^s(\mathbb R)$ bound,
we construct a sequence of wave packets $h_N$ for $N \in \N$,
corresponding times $T_N \to 0$ as $N \to \infty$,
and solutions $u_N$ such that $\|u_N(0)-\psi\|_{H^s}\ll1$ and
\[
|(w_N(0),h_N(0))_{L^2}|\ge c
\]
for some $c>0$ independent of $N$.
At time $T_N$, however,
we obtain
\[
|(w_N(T_N),h_N(T_N))_{L^2}|\to0.
\]

To derive a contradiction,
we estimate
\[
\int_0^{T_N}
\frac d{dt}(w_N(t),h_N(t))_{L^2} dt.
\]
The main difficulty comes from the third term on the right-hand side of \eqref{eqw1},
which involves $\cj{\dx w}$.
Unlike the term $a_0\dx w$, it carries the opposite Schr\"odinger phase.
Exploiting this nonresonance,
we integrate its contribution to the dual pairing by parts in time and gain a decaying factor at high frequency.
The other error terms are controlled by the uniform $H^s$ bounds and the time regularity of the solutions.
Consequently, integrating \eqref{wp1a}, we obtain
\begin{align*}
&|(w_N(T_N),h_N(T_N))_{L^2}-(w_N(0),h_N(0))_{L^2}|
\\
&=
\bigg|\int_0^{T_N}
\frac d{dt} (w_N(t),h_N(t))_{L^2} dt\bigg|
\to0,
\end{align*}
which contradicts the preceding two estimates.

\begin{remark}
\rm

On the torus $\T:=\R/(2\pi\Z)$,
the authors proved in \cite{KoOk25} that,
for polynomial nonlinearities,
the Cauchy problem is locally well-posed in $H^s(\T)$
for $s>\frac52$ if and only if
\[
\int_\T
\Im F_\be(\phi,\dx \phi,\cj \phi,\cj{\dx \phi}) dx=0
\]
for every $\phi \in H^s(\T)$.
As explained after Theorem 1.4 in \cite{KoOk25},
this condition is equivalent to the existence of
a smooth real-valued function $\Phi(\al,\cj\al)$ such that
\[
\Im F_\be(\phi,\dx \phi,\cj \phi,\cj{\dx \phi})
=\dx ( \Phi(\phi,\cj \phi) )
\]
for every $\phi\in H^s(\T)$.

This contrasts with the condition \eqref{Fbez} on $\R$.
Indeed, if $F$ is a polynomial consisting only of terms of degree at least three,
then \eqref{Fbez} holds automatically,
since $\Im F_\be(\phi,\dx \phi,\cj \phi,\cj{\dx \phi})$
belongs to $L^1(\R)$ for every $\phi \in H^s(\R)$.
On $\T$, however, the integral condition may fail
even for such nonlinearities.
\end{remark}

We briefly describe the organization of the paper.
In Section \ref{sec:prel},
we collect the analytic estimates used throughout the paper.
In Section \ref{sec:miz},
we characterize the nonlinear Mizohata condition \eqref{Fbez} in terms of the derivatives of $F$ at the origin.
In Section \ref{sec:suff},
we prove the sufficiency of \eqref{eq:condition}.
In Section \ref{sec:necessity},
we prove the necessity  of \eqref{eq:condition}.
Finally,
in Appendix \ref{SEC:ASCT},
we prove the strong continuity in time of the solutions appearing in Definition \ref{def:wp}.

\subsection{Notation}

We conclude this section by fixing notation.
For a Banach space $X$, we denote by $B_X(\phi,\rho)$ the ball centered
at $\phi\in X$ with radius $\rho>0$:
\[
B_X(\phi,\rho)
:=
\{f\in X\mid \|f-\phi\|_X<\rho\}.
\]

For $T>0$,
let $L^\infty([0,T];X)$ be the space of essentially bounded $X$-valued functions on $[0,T]$
equipped with the norm
\[
\|u\|_{L_T^\infty X}
:=
\esssup_{t\in[0,T]}
\|u(t)\|_{X}.
\]
Let $C_w([0,T];X)$ be the space of weakly continuous $X$-valued functions on $[0,T]$.
If $u\in C_w([0,T];X)$, then
$t\mapsto\|u(t)\|_{X}$ is lower semicontinuous on $[0,T]$.
Consequently,
we have
\begin{equation}
\sup_{t\in[0,T]}\|u(t)\|_{X}
=
\|u\|_{L_T^\infty X}.
\label{NNN1}
\end{equation}
Let $AC([0,T];X)$ be the space of absolutely continuous $X$-valued functions on $[0,T]$.

For $k \in \N$ and $p \in [1,\infty]$,
we denote by $W^{k,p}(\R)$
the Sobolev space equipped with the norm
\[
\|f\|_{W^{k,p}}
:=
\| f \|_{L^p} + \| \dx^k f \|_{L^p}.
\]

When $f \in L^1(\R)$, we define
\[
\ft f(\xi)
:= \frac 1{\sqrt {2\pi}} \int_\R e^{-ix \xi} f(x) dx.
\]
We denote by $\ft f$ or $\Ft [f]$ the Fourier transform of a tempered distribution $f$.
For $s \in \R$,
let $J^s$ be the Fourier multiplier with symbol $\jb{\xi}^s$,
where
\[
\jb{\xi} := (1+|\xi|^2)^{\frac 12}.
\]
We denote by $H^s(\R)$ the $L^2$-based Sobolev space equipped with the norm
\[
\|f\|_{H^s}
:=
\| J^s f \|_{L^2}
= \|\jb{\cdot }^s \ft f\|_{L^2}.
\]
We write
\[
(f,g)_{L^2}:=
\int_\R f \cj{g} dx,
\quad
(f,g)_{H^s}
:= (J^s f,J^s g)_{L^2}.
\]

We write $A \les B$ if $A \le CB$ for some constant $C>0$.
When the constant $C>0$ is sufficiently small,
we write $A \ll B$.
We write $A \sim B$ if both $A \les B$ and $B \les A$ hold.

\section{Preliminaries}
\label{sec:prel}

We state the analytic results in the precise forms used below.

\begin{lemma}
\label{lem:product}

\begin{enumerate}
\item
For $r>0$ and $f,g\in H^r (\R) \cap L^\infty (\R)$,
\[
\|fg\|_{H^r}
\les
\|f\|_{L^\infty} \|g\|_{H^r}
+\|g\|_{L^\infty} \|f\|_{H^r}
.
\]

\item
For $\s> \frac 12$ and $|r|\le \s$,
multiplication extends to a continuous bilinear map
$H^\s(\R) \times H^r(\R) \to H^r(\R)$, with
\[
\|fg\|_{H^r}
\les
\|f\|_{H^\s}\|g\|_{H^r}.
\]
In particular, $H^\s(\R)$ is an algebra, and
\[
\|fg\|_{H^{\s-1}}
\les
\|f\|_{H^\s} \|g\|_{H^{\s-1}}
\]
for $\s>\frac12$.
\end{enumerate}

\end{lemma}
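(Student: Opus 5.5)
Both parts are standard, and I would prove them with a Littlewood--Paley (Bony paraproduct) decomposition for (i) and Fourier-side weighted convolution bounds for (ii).

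For (i), write
\[
fg=T_fg+T_gf+R(f,g),
\]
where $T_fg=\sum_j S_{j-3}f\,\Delta_j g$ and $R$ collects the nearly diagonal frequency interactions $\Delta_j f\,\wt\Delta_j g$. Since $\|S_{j-3}f\|_{L^\infty}\les\|f\|_{L^\infty}$ and the blocks $S_{j-3}f\,\Delta_jg$ have Fourier support in annuli of size $\sim 2^j$, almost orthogonality gives
\[
\|T_fg\|_{H^r}\les\|f\|_{L^\infty}\|g\|_{H^r}.
\]
By symmetry, $\|T_gf\|_{H^r}\les\|g\|_{L^\infty}\|f\|_{H^r}$. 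For the remainder, each term $\Delta_jf\,\wt\Delta_jg$ has Fourier support in a ball of radius $\les 2^j$. Hence
\[
\|\Delta_k R\|_{L^2}\les\sum_{j\ge k-C}\|f\|_{L^\infty}\|\wt\Delta_jg\|_{L^2}.
\]
Multiplying by $2^{kr}$ and using $r>0$ to sum the geometric tail $\sum_{j\ge k-C}2^{(k-j)r}$, Schur's test gives
\[
\|R\|_{H^r}\les\|f\|_{L^\infty}\|g\|_{H^r}.
\]
The positivity of $r$ is used exactly at this summation step, which is the only nontrivial point of (i).

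For (ii), I would first treat $0\le r\le\s$ on the Fourier side. Use
\[
\jb{\xi}^r\les\jb{\xi-\eta}^r+\jb{\eta}^r,
\]
so that
\[
\jb{\xi}^r|\ft{fg}(\xi)|\les\big(\jb{\cdot}^r|\ft f|\big)*|\ft g|+|\ft f|*\big(\jb{\cdot}^r|\ft g|\big).
\]
The second term is bounded in $L^2$ by Young's inequality as $\|\ft f\|_{L^1}\|g\|_{H^r}$, and $\|\ft f\|_{L^1}\les\|f\|_{H^\s}$ by Cauchy--Schwarz since $\s>\frac12$.

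The first term is the main obstacle when $r$ is close to $\s$, because it puts $r$ derivatives on $f$ while $g$ lies only in $H^r$. I would split it according to which frequency is larger.
\begin{itemize}
\item Where $|\xi-\eta|\le|\eta|$, replace $\jb{\xi-\eta}^r$ by $\jb{\eta}^r$; this reduces to the previous case.
\item Where $|\xi-\eta|>|\eta|$, write
\[
\jb{\xi-\eta}^r|\ft f(\xi-\eta)|=\jb{\xi-\eta}^{\s}|\ft f(\xi-\eta)|\cdot\jb{\xi-\eta}^{r-\s}
\le\jb{\xi-\eta}^{\s}|\ft f(\xi-\eta)|\cdot\jb{\eta}^{r-\s},
\]
using $r\le\s$ and $|\xi-\eta|>|\eta|$. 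It then remains to bound
\[
\big\|\big(\jb{\cdot}^\s|\ft f|\big)*\big(\jb{\cdot}^{r-\s}|\ft g|\big)\big\|_{L^2}\les\|f\|_{H^\s}\big\|\jb{\cdot}^{r-\s}\ft g\big\|_{L^1}.
\]
By Cauchy--Schwarz, $\|\jb{\cdot}^{r-\s}\ft g\|_{L^1}\le\|\jb{\cdot}^{-\s}\|_{L^2}\|g\|_{H^r}\les\|g\|_{H^r}$, again because $\s>\frac12$.
\end{itemize}

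For $-\s\le r<0$, argue by duality. We have
\[
\|fg\|_{H^r}=\sup_{\|h\|_{H^{-r}}\le1}|(g,\cj f h)_{L^2}|\le\|g\|_{H^r}\sup_{\|h\|_{H^{-r}}\le1}\|\cj f h\|_{H^{-r}}.
\]
Since $0<-r\le\s$, the nonnegative case bounds $\|\cj fh\|_{H^{-r}}\les\|f\|_{H^\s}$, which gives the estimate.

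The extension of multiplication to a continuous bilinear map follows from these estimates by density of Schwartz functions. The algebra property is the case $r=\s$. The final $H^{\s-1}$ estimate is the case $r=\s-1$, which is admissible since $|\s-1|\le\s$ for $\s>\frac12$.
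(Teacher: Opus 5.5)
Your proof is correct, but it differs from the paper in kind. The paper gives no argument of its own: it cites Lemma X4 of Kato--Ponce for (i) and Corollary 3.16 of Tao for (ii). You instead prove the one-dimensional $L^2$ statements directly.

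In (i), your Bony decomposition isolates the only place where $r>0$ matters, namely summing $\sum_{j\ge k-C}2^{(k-j)r}$ for the resonant part. You even bound that part by $\|f\|_{L^\infty}\|g\|_{H^r}$ alone.

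In (ii), you split $\int\langle\xi-\eta\rangle^r|\widehat f(\xi-\eta)||\widehat g(\eta)|\,d\eta$ into the regions $|\xi-\eta|\le|\eta|$ and $|\xi-\eta|>|\eta|$. This is legitimate since the integrand is nonnegative, and it correctly handles the endpoint $r=\sigma$. The duality identity $(fg,h)_{L^2}=(g,\overline{f}h)_{L^2}$ then transfers the bound to $-\sigma\le r<0$, and the check $|\sigma-1|\le\sigma$ gives the final claim.

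The citations buy brevity and greater generality; for instance, the Kato--Ponce estimate holds in $L^p$ for $1<p<\infty$. The price is reliance on external results. Your route buys self-containedness. It uses only tools the paper already employs: Littlewood--Paley blocks and almost orthogonality as in Lemma \ref{lem:prod2}, and Fourier-side splitting, Young's inequality, and $\langle\cdot\rangle^{-\sigma}\in L^2$ for $\sigma>\frac12$ as in Lemma \ref{lem:Com1}. It would therefore fit naturally in Section \ref{sec:prel}.
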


\begin{proof}
For (i),
see Lemma X4 in \cite{KaPo88}.
For (ii),
see Corollary 3.16 on page 855 in \cite{Tao01}.
\end{proof}

\begin{lemma}
\label{lem:comp}
Let $F \in C^\infty(\C^2;\C)$.
For $r>1$ and $M>0$,
there exists $C=C(r,F,M)>0$ such that
\begin{align}
&\| F(f,\dx f,\cj f,\cj{\dx f}) - F(0)\|_{H^{r-1}}
\le C \|f\|_{H^r},
\label{comp1}
\\
\begin{split}
&\|F(f,\dx f,\cj{f},\cj{\dx f})-F(g,\dx g,\cj{g},\cj{\dx g})\|_{H^{r-1}}
\\
&\le C
\{
\|f-g\|_{H^r}
+
(\|f\|_{H^r}+\|g\|_{H^r})
\|f-g\|_{W^{1,\infty}}
\}
\end{split}
\label{comp2}
\end{align}
for any
$f,g \in H^r (\R) \cap W^{1,\infty}(\R)$ with
\[
\| f \|_{W^{1,\infty}} + \| g \|_{W^{1,\infty}} \le M.
\]
\end{lemma}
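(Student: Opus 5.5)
We prove \eqref{comp1} and \eqref{comp2} by combining the fundamental theorem of calculus in the parameter $\theta\in[0,1]$ with the product estimates of Lemma \ref{lem:product}. Write $\mathbf f=(f,\dx f,\cj f,\cj{\dx f})$ and $\mathbf g$ analogously. Since $f\in H^r\cap W^{1,\infty}$ with $r>1$, every component of $\mathbf f$ lies in $H^{r-1}\cap L^\infty$, and $\|\mathbf f\|_{H^{r-1}}\les\|f\|_{H^r}$. All derivatives of $F$ are evaluated on the compact set $\{|z|\le M\}\subset\C^4$, so every constant depends only on $F$, $M$ and $r$.

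\emph{Step 1: a composition estimate.} For $G\in C^\infty(\C^2;\C)$ with $G(0)=0$ and $r-1>0$, we first show
\[
\|G(\mathbf f)\|_{H^{r-1}}\le C(G,M,r)\|\mathbf f\|_{H^{r-1}}.
\]
This is the classical Moser-type estimate. When $r-1\le 1$, the $L^2$ part follows from $|G(z)|\le C|z|$ on $|z|\le M$. The fractional part follows from the difference characterization $\|h\|_{\dot H^\s}^2\sim\iint|h(x+y)-h(x)|^2|y|^{-1-2\s}\,dx\,dy$ for $0<\s<1$, combined with the Lipschitz bound of $G$ on the ball of radius $M$. When $r-1>1$, we induct on $\lfloor r-1\rfloor$. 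Writing $\dx G(\mathbf f)=\sum_j (\pd_jG)(\mathbf f)\,\dx\mathbf f_j$, we apply Lemma \ref{lem:product} (i) with exponent $r-2$ to the products, and bound $(\pd_jG)(\mathbf f)-\pd_jG(0)$ in $H^{r-2}$ by the inductive hypothesis. The $L^\infty$ factors are controlled by $M$. Estimate \eqref{comp1} is then the case $G=F-F(0)$.

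\emph{Step 2: the difference estimate.} We write
\[
F(\mathbf f)-F(\mathbf g)=\sum_j\Big(\int_0^1(\pd_jF)(\mathbf g+\theta(\mathbf f-\mathbf g))\,d\theta\Big)(\mathbf f-\mathbf g)_j=:\sum_j A_j\,(\mathbf f-\mathbf g)_j,
\]
where $\pd_j$ ranges over $\pd_\al,\pd_\be,\pd_{\cj\al},\pd_{\cj\be}$. We split $A_j=A_j(0)+\wt A_j$ with $A_j(0)=\pd_jF(0)$. The constant part contributes at most $C\|f-g\|_{H^r}$. For the remaining part, Lemma \ref{lem:product} (i) gives
\[
\|\wt A_j(\mathbf f-\mathbf g)_j\|_{H^{r-1}}\les\|\wt A_j\|_{L^\infty}\|\mathbf f-\mathbf g\|_{H^{r-1}}+\|\wt A_j\|_{H^{r-1}}\|\mathbf f-\mathbf g\|_{L^\infty}.
\]
Here $\|\wt A_j\|_{L^\infty}\le C(M)$ and $\|\mathbf f-\mathbf g\|_{L^\infty}\le\|f-g\|_{W^{1,\infty}}$. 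Applying Step 1 to $G=\pd_jF-\pd_jF(0)$, evaluated at the convex combination and integrated in $\theta$ (Minkowski's inequality), yields $\|\wt A_j\|_{H^{r-1}}\le C(\|f\|_{H^r}+\|g\|_{H^r})$. This is exactly the right-hand side of \eqref{comp2}.

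\emph{Main obstacle.} The only delicate point is Step 1 for non-integer $r-1$, particularly $0<r-1<1$, since Lemma \ref{lem:product} cannot be applied directly to nonlinear functions. If the difference-quotient argument becomes cumbersome, we will instead cite the standard Moser composition estimate for $H^\s\cap L^\infty$ with $\s>0$ (e.g.\ from Taylor's or Tao's notes). With either route, the induction in Step 1 reduces the general case to $\s\in(0,1]$.
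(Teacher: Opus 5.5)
Your Step 2 is the paper's argument for \eqref{comp2}: the same $\theta$-integral, then Lemma \ref{lem:product} (i) and \eqref{comp1} applied at the convex combination, whose $W^{1,\infty}$-norm is still at most $M$. For \eqref{comp1} itself the paper gives no proof. It cites the Moser-type composition estimate of Christ--Weinstein (Proposition 3.1 of \cite{ChWe91}) and of Runst--Sickel. The gap is in your self-contained proof of Step 1, in the induction for $\s=r-1>1$. Applying Lemma \ref{lem:product} (i) at exponent $\s-1$ to $\big((\partial_jG)(\mathbf f)-\partial_jG(0)\big)\dx\mathbf f_j$ produces the term
\[
\|(\partial_jG)(\mathbf f)-\partial_jG(0)\|_{H^{\s-1}}\,\|\dx\mathbf f_j\|_{L^\infty}.
\]
For the components $\mathbf f_j=\dx f,\cj{\dx f}$, the factor $\dx\mathbf f_j$ is $\dx^2 f$ or its conjugate, and this is not controlled by $M$, since the hypothesis only bounds $\|f\|_{W^{1,\infty}}$. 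For $2<r\le\frac52$ it need not be bounded at all. For $r>\frac52$ the best available bound is $\|\dx^2 f\|_{L^\infty}\les\|f\|_{H^r}$, which turns your estimate into $C(M)\|f\|_{H^{r-1}}\|f\|_{H^r}$ instead of the bound $C(M)\|f\|_{H^r}$ claimed in \eqref{comp1}. So ``the $L^\infty$ factors are controlled by $M$'' fails for exactly the factor that matters. Part (ii) of Lemma \ref{lem:product} does not help either: it also yields a bound quadratic in Sobolev norms of $f$.

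Your diagnosis of the main obstacle is therefore inverted. The range $0<\s<1$ is handled correctly by your Gagliardo-seminorm plus Lipschitz argument, and $\s=1$ follows from the chain rule. The real difficulty is $\s>1$, where a bound linear in $\|\mathbf f\|_{H^\s}$ with constant depending only on $\|\mathbf f\|_{L^\infty}$ requires interpolation.
\begin{itemize}
\item For integer order $k$, expand $\dx^kG(\mathbf f)$ by Fa\`a di Bruno and use Gagliardo--Nirenberg, $\|\dx^{j}\mathbf f\|_{L^{2k/j}}\les\|\mathbf f\|_{L^\infty}^{1-j/k}\|\dx^{k}\mathbf f\|_{L^2}^{j/k}$, exactly as in the paper's proof of Lemma \ref{lem:gauge}.
\item For non-integer $\s>1$ you additionally need an $L^p$-based fractional Leibniz or chain rule (Kato--Ponce, Christ--Weinstein), or a paralinearization argument.
\end{itemize}
The simplest repair is the paper's route. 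Invoke the Moser estimate $\|G(\mathbf f)\|_{H^\s}\le C(G,\|\mathbf f\|_{L^\infty},\s)\|\mathbf f\|_{H^\s}$ for all $\s>0$, not only for $\s\in(0,1)$, and drop the induction. With that, your Step 2 goes through as written.
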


\begin{proof}
For the composition estimate \eqref{comp1},
see Proposition 3.1 in \cite{ChWe91}.
See also Theorem 1 on page 367 and Theorem 2 on page 368 in \cite{RuSi96}.
For the difference estimate \eqref{comp2},
we write
$F(f,\dx f,\cj f,\cj{\dx f})=F[f]$ for short.
We have
\begin{align*}
F[f]-F[g]
&=
\int_0^1
\{
F_\al [f + \theta (f-g)] (f-g)
+
F_\be [f + \theta (f-g)] \dx (f-g) 
\\
&\quad
+
F_{\cj \al} [f + \theta (f-g)] (\cj f-\cj g) 
+
F_{\cj \be} [f + \theta (f-g)] \dx (\cj f-\cj g) 
\} d\theta.
\end{align*}
Then,
the desired estimate follows from Lemma \ref{lem:product} and \eqref{comp1}.
\end{proof}

\begin{corollary}
\label{cor:comppro}
Let $\s > \frac 12$, $r>1$, and $F \in C^\infty(\C^2;\C)$.
Then, we have
\begin{equation}
\begin{aligned}
&\| F(f, \dx f, \cj f, \cj {\dx f}) g \|_{H^{r-1}}
\le
C(\|f\|_{H^{\s+1}}, \|g\|_{H^{\s}})
\big( \|g\|_{H^{r-1}} + \|f\|_{H^r} \big)
\end{aligned}
\label{compro}
\end{equation}
for any
\[
f\in H^{\s+1}(\R)\cap H^r(\R),
\quad
g\in H^{\s}(\R)\cap H^{r-1}(\R).
\]
\end{corollary}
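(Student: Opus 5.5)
The plan is to split $F(f,\dx f,\cj f,\cj{\dx f})g$ into a constant part and a remainder that vanishes at the origin, namely
\[
F(f,\dx f,\cj f,\cj{\dx f}) g = F(0) g + \big(F(f,\dx f,\cj f,\cj{\dx f}) - F(0)\big) g,
\]
and to estimate the two pieces separately. The first piece is immediate: $\|F(0)g\|_{H^{r-1}} \le |F(0)|\,\|g\|_{H^{r-1}}$, which is of the required form.

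For the second piece, write $G := F(f,\dx f,\cj f,\cj{\dx f}) - F(0)$. Since $r-1>0$, we can apply Lemma~\ref{lem:product}(i) with exponent $r-1$. This gives
\[
\|Gg\|_{H^{r-1}} \les \|G\|_{L^\infty}\|g\|_{H^{r-1}} + \|g\|_{L^\infty}\|G\|_{H^{r-1}}.
\]
Because $\s>\frac12$, Sobolev embedding yields $\|g\|_{L^\infty}\les \|g\|_{H^\s}$ and $\|f\|_{W^{1,\infty}} \les \|f\|_{H^{\s+1}}$. Two consequences follow.
\begin{enumerate}
\item $\|G\|_{L^\infty}$ is bounded by a constant depending only on $\|f\|_{H^{\s+1}}$, since $F$ is continuous and hence bounded on the compact set $\{|\al|,|\be|\le M\}$, where $M\les \|f\|_{H^{\s+1}}$.
\item By \eqref{comp1} of Lemma~\ref{lem:comp}, applied with $M$ chosen depending on $\|f\|_{H^{\s+1}}$, we get $\|G\|_{H^{r-1}} \le C(r,F,M)\|f\|_{H^r}$.
\end{enumerate}
Substituting these into the product estimate gives $\|Gg\|_{H^{r-1}} \le C(\|f\|_{H^{\s+1}},\|g\|_{H^\s})(\|g\|_{H^{r-1}}+\|f\|_{H^r})$. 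Adding the bound for $F(0)g$ yields \eqref{compro}.

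The main obstacle is verifying that Lemma~\ref{lem:product}(i) and Lemma~\ref{lem:comp} apply, i.e.\ that their hypotheses $G,g\in H^{r-1}\cap L^\infty$ and $f\in H^r\cap W^{1,\infty}$ are met. These follow from $f\in H^{\s+1}\cap H^r$ and $g\in H^\s\cap H^{r-1}$ together with the embeddings above, and from \eqref{comp1}, which places $G$ in $H^{r-1}$. No splitting according to whether $r-1\le\s$ is needed, since the Kato--Ponce estimate holds for every positive order.
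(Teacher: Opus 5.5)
Your proposal is correct and is essentially the paper's proof: split off $F(0)g$, apply Lemma~\ref{lem:product}(i) at order $r-1>0$ to $(F(f,\dx f,\cj f,\cj{\dx f})-F(0))g$, and bound the $L^\infty$ and $H^{r-1}$ norms of the remainder using $H^{\s+1}(\R)\hookrightarrow W^{1,\infty}(\R)$, $H^\s(\R)\hookrightarrow L^\infty(\R)$, and \eqref{comp1}. Your direct continuity argument for the $L^\infty$ bound of the remainder is a harmless variant of the paper's appeal to Lemma~\ref{lem:comp}.
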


\begin{proof}
In this proof, we write
$F(f,\dx f,\cj f,\cj{\dx f})=F[f]$ for short.
Since $\s>\frac12$,
the Sobolev embedding $H^\s(\R) \hookrightarrow L^\infty(\R)$ and Lemma \ref{lem:comp} give
\[
\|F[f]-F[0]\|_{L^\infty}
\le C(\|f\|_{H^{\s+1}}),
\quad
\|g\|_{L^\infty}\les\|g\|_{H^\s}.
\]
Moreover, since $r-1>0$, Lemma \ref{lem:comp} yields that
\[
\begin{aligned}
\|F[f]-F[0]\|_{H^{r-1}}
&\le C(\|f\|_{H^{\s+1}})
\big(\|f\|_{H^{r-1}}+\|\dx f\|_{H^{r-1}}\big)
\\
&\le C(\|f\|_{H^{\s+1}})\|f\|_{H^r}.
\end{aligned}
\]
Hence, Lemma \ref{lem:product} (i) implies
\begin{align*}
\|F[f]g\|_{H^{r-1}}
&\les
\|g\|_{H^{r-1}}
+\|(F[f]-F[0])g\|_{H^{r-1}}
\\
&\les
\|g\|_{H^{r-1}}
+\|F[f]-F[0]\|_{H^{r-1}}\|g\|_{L^\infty}
\\
&\quad
+\|F[f]-F[0]\|_{L^\infty}\|g\|_{H^{r-1}}
\\
&\le
C(\|f\|_{H^{\s+1}})
\|g\|_{H^{r-1}}
+
C(\|f\|_{H^{\s+1}},\|g\|_{H^\s})
\|f\|_{H^r}.
\end{align*}
This proves \eqref{compro}.
\end{proof}

\begin{lemma}
\label{lem:prod2}
Let $r \ge 0$ and
\[
0<
\dl<
\min\Big( \frac12,1-r+[r]\Big),
\]
where $[r]$ means the largest integer less than or equal to $r$.
Then,
we have
\[
\| f g \|_{H^r} \les \|f \|_{H^r} \| g \|_{L^\infty}
+ \| f \|_{H^{r-[r]-\frac 12 + \dl}} \| \dx^{[r]+1} g \|_{L^2}
\]
for
$f \in H^r(\R)$ and
$g \in L^\infty(\R)$ with $\dx^{[r]+1} g \in L^2(\R)$.
\end{lemma}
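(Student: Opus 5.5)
The approach is a paraproduct (Bony) decomposition in Littlewood--Paley pieces. Write $k=[r]$ and $\theta=r-k\in[0,1)$, and split
\[
fg=T_g f+T_f g+R(f,g),
\]
where $T_g f=\sum_j S_{j-3}g\,\Delta_j f$, $T_f g=\sum_j S_{j-3}f\,\Delta_j g$, and $R$ collects the comparable-frequency interactions.

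The term $T_g f$ is handled directly. Each summand is frequency-localized to $|\xi|\sim 2^j$ and satisfies $\|S_{j-3}g\|_{L^\infty}\les\|g\|_{L^\infty}$. Almost orthogonality then gives
\[
\|T_gf\|_{H^r}\les\|g\|_{L^\infty}\|f\|_{H^r},
\]
which is the first term on the right-hand side.

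For the terms where $g$ carries high frequency, we place $\Delta_j g$ in $L^2$ using the hypothesis $\dx^{k+1}g\in L^2$. For $j\ge1$,
\[
\|\Delta_j g\|_{L^2}\sim 2^{-(k+1)j}\|\Delta_j\dx^{k+1}g\|_{L^2}.
\]
In $T_f g$ the output frequency is $\sim 2^j$. We estimate $\|S_{j-3}f\|_{L^\infty}\les\sum_{l<j}2^{l/2}\|\Delta_l f\|_{L^2}$ by Bernstein's inequality. This gives
\[
2^{jr}\|S_{j-3}f\,\Delta_jg\|_{L^2}\les 2^{j(r-k-1)}\sum_{l<j}2^{l/2}\|\Delta_lf\|_{L^2}\,\|\Delta_j\dx^{k+1}g\|_{L^2}.
\]
Set $\sigma=r-k-\frac12+\dl$. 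Then
\[
2^{l/2}\|\Delta_l f\|_{L^2}=2^{l(\frac12-\sigma)}\,2^{l\sigma}\|\Delta_l f\|_{L^2},
\]
and $\frac12-\sigma=1-\theta-\dl>0$ by the assumption $\dl<1-r+[r]$. Hence the sum over $l<j$ is bounded by $2^{j(1-\theta-\dl)}\|f\|_{H^\sigma}$ (using Cauchy--Schwarz when $\sigma$ might be negative, or a geometric bound in $\ell^\infty$). Multiplying by $2^{j(\theta-1)}$ leaves a factor $2^{-j\dl}$, which is summable. The resulting square sum in $j$ is therefore controlled by
\[
\|f\|_{H^\sigma}\|\dx^{k+1}g\|_{L^2}.
\]
The low-frequency part $j\le0$ of $g$ is contained in $S_0 g$, which is bounded in $L^\infty$ by $\|g\|_{L^\infty}$. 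It is absorbed into the paraproduct-type bound $\|f\|_{H^r}\|g\|_{L^\infty}$: a smooth low-frequency multiplier times $f$, plus a harmless commutator.

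For the resonant term $R=\sum_{|j-j'|\le2}\Delta_jf\,\Delta_{j'}g$, the output has frequency $\les 2^j$. By Bernstein's inequality on the output at frequency $2^m\le 2^j$, with $L^1\to L^2$ costing $2^{m/2}$,
\[
\|\Delta_m R\|_{L^2}\les \sum_{j\ge m}2^{m/2}\|\Delta_jf\|_{L^2}\,\|\Delta_jg\|_{L^2}.
\]
This gives
\[
2^{mr}\|\Delta_mR\|_{L^2}\les\sum_{j\ge m}2^{(m-j)(r+\frac12)}\,2^{j(r+\frac12-k-1-\sigma)}\,2^{j\sigma}\|\Delta_jf\|_{L^2}\,\|\Delta_j\dx^{k+1}g\|_{L^2}.
\]
The exponent is $r+\frac12-k-1-\sigma=-\dl<0$, and $r+\frac12>0$, so Young's inequality for the discrete convolution gives the bound $\|f\|_{H^\sigma}\|\dx^{k+1}g\|_{L^2}$. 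Alternatively, one can use $\|\Delta_jg\|_{L^\infty}\les\|g\|_{L^\infty}$ together with $r>0$ when $r>0$; either route works.

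The main obstacle is the $T_f g$ term when $r$ is such that $\sigma<\frac12$. There $f$ is not bounded, and the condition $\dl<1-\theta$ is exactly what makes the low-frequency sum of $f$ in $L^\infty$ grow more slowly than the decay $2^{j(\theta-1)}$ gained from $g$. The condition $\dl<\frac12$ ensures $\sigma<r$, so that the lemma is a genuine improvement over the bound $\|f\|_{H^r}\|g\|_{W^{r,\infty}}$. I would check the endpoint bookkeeping carefully, since $\dl>0$ is precisely what gives the summable factor $2^{-j\dl}$.
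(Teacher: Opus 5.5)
Your argument is correct and is essentially the paper's proof. The paper splits only into $N_1\gg N_2$, bounded by $\|f\|_{H^r}\|g\|_{L^\infty}$ via almost orthogonality as in your $T_gf$, and $N_1\lesssim N_2$, where it uses your Bernstein bound $N_1^{1/2}\|P_{N_1}f\|_{L^2}\|P_{N_2}g\|_{L^2}$ and the same exponent bookkeeping with $1-r+[r]-\dl>0$. The one difference is that the paper sums this second region by the triangle inequality, and that is where the factor $N_2^{-\dl}$, hence $\dl>0$, is genuinely needed; your orthogonality and discrete Young summation of $T_fg$ and $R$ does not use it, so your closing remark overstates the role of $\dl>0$.
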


\begin{proof}
Let $P_N$ be the inhomogeneous Littlewood-Paley projector onto frequencies
$|\xi|\sim N$ when $N \in 2^\N$ and $|\xi| \le 1$ when $N=1$.
Note that
\[
f = \sum_{N \in 2^{\N_0} } P_N f
\]
holds for any tempered distribution.

Using the paraproduct decomposition, we have
\begin{equation}
\begin{aligned}
f g
=
\sum_{ \substack {N_1, N_2 \in 2^{\N_0} \\ N_1 \gg N_2} }
(P_{N_1} f) P_{N_2} g
+
\sum_{ \substack {N_1, N_2 \in 2^{\N_0} \\ N_1 \les N_2} }
(P_{N_1} f) P_{N_2} g
=:
\I + \II.
\label{pro1}
\end{aligned}
\end{equation}
For $\I$, put
\[
P_{\ll N}g:=\sum_{\substack{N_2 \in 2^{\N_0} \\ N_2\ll N}} P_{N_2}g.
\]
Since $(P_Nf)P_{\ll N}g$ is localized to frequencies $|\xi|\sim N$ and the kernel of $P_{\ll N}$ has uniformly bounded $L^1$-norm, almost orthogonality gives
\begin{align*}
\|\I\|_{H^r}^2
&\les
\sum_{N\in2^{\N_0}}N^{2r}\| (P_Nf) P_{\ll N}g\|_{L^2}^2
\\
&\les
\|g\|_{L^\infty}^2\sum_{N\in2^{\N_0}} N^{2r} \|P_Nf\|_{L^2}^2
\les
\|f\|_{H^r}^2\|g\|_{L^\infty}^2.
\end{align*}
It remains to estimate $\II$ in \eqref{pro1}.
Since the Fourier support of $(P_{N_1}f) P_{N_2}g$ is contained in $\{|\xi|\lesssim N_2\}$ when $N_1\les N_2$, we have
\[
\|\II\|_{H^r}
\les
\sum_{\substack{N_1,N_2\in2^{\N_0}\\N_1\les N_2}}
N_2^r \| (P_{N_1}f) P_{N_2}g\|_{L^2}.
\]
By Bernstein's inequality, for $N_2>1$ and $N_1\les N_2$,
\begin{align*}
&N_2^r \|(P_{N_1}f) P_{N_2}g\|_{L^2}
\\
&\les
N_2^r N_1^{\frac12}
\|P_{N_1}f\|_{L^2}\|P_{N_2}g\|_{L^2}
\\
&\les
\Big(\frac{N_1}{N_2}\Big)^{1-r+[r]-\dl} N_2^{-\dl}
\|P_{N_1} N_1^{r-[r]-\frac12+\dl}f\|_{L^2}
\|P_{N_2} N_2^{[r]+1}g\|_{L^2}.
\end{align*}
The Cauchy-Schwarz inequality gives
\begin{align*}
&\sum_{\substack{N_2\in2^\N, \, N_1\in2^{\N_0}\\N_1\les N_2}}
N_2^r \|P_{N_1}fP_{N_2}g\|_{L^2}
\les
\|f\|_{H^{r-[r]-\frac12+\dl}}
 \| \dx^{[r]+1} g \|_{L^2}
.
\end{align*}
When $N_2\les1$, both frequencies are bounded and
\[
\sum_{\substack{N_1,N_2\in2^{\N_0} \\ N_1\les N_2\les1}}
N_2^r \|P_{N_1}fP_{N_2}g\|_{L^2}
\les
\|f\|_{L^2}\|g\|_{L^\infty}.
\]
Consequently,
\[
\|\II\|_{H^r}
\les
\|f\|_{L^2}\|g\|_{L^\infty}
+\|f\|_{H^{r-[r]-\frac12+\dl}}
\| \dx^{[r]+1} g \|_{L^2}
.
\]
Together with \eqref{pro1}, this proves the desired bound.
\end{proof}

For the gauge transformation,
we use the following bound.

\begin{lemma}
\label{lem:gauge}
Let $\s, r \in \R$ satisfy
\[
\frac 12 < \s \le r.
\]
Let $M>0$.
Assume that $\Ld \in L^\infty(\R)$ satisfies $\dx \Ld \in H^r (\R)$ and
\[
\| \Ld \|_{L^\infty} + \| \dx \Ld \|_{H^\s} \le M.
\]
Then,
there exists a constant $C(M)>0$ such that
\[
\| e^\Ld f\|_{H^r}
\le
C(M)
(\|f\|_{H^r}
+ \|\dx \Ld \|_{H^r}\|f\|_{H^\s})
\]
for any $f\in H^r(\R)$.
\end{lemma}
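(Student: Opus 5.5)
We write $e^\Ld f = f + (e^\Ld - 1) f$ and estimate the correction term with Lemma \ref{lem:product} (i). The obstacle is that $\Ld$ is only bounded, not in $L^2$, so $e^\Ld-1$ is not in any $H^r$. We therefore avoid putting $e^\Ld$ itself into a Sobolev norm. Instead we use only $L^\infty$ bounds on $e^\Ld$ and $H^\rho$ bounds on its derivative $\dx(e^\Ld)=e^\Ld\dx\Ld$.

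The first step is the case $0\le r\le 1$ with no derivative placed on $f$. Here we use the characterization $\|g\|_{H^r}\sim\|g\|_{L^2}+\|\dx g\|_{H^{r-1}}$. Since $\|e^\Ld f\|_{L^2}\le e^M\|f\|_{L^2}$, it remains to control
\[
\dx(e^\Ld f)=e^\Ld\dx f+e^\Ld(\dx\Ld)f .
\]
For the second term we use Lemma \ref{lem:product} (ii) with $\s>\frac12$: $\|e^\Ld(\dx\Ld) f\|_{H^{r-1}}\le\|e^\Ld(\dx \Ld)f\|_{L^2}\le e^M\|\dx\Ld\|_{L^\infty}\|f\|_{L^2}\le C(M)\|f\|_{H^\s}$. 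The first term $e^\Ld\dx f$ in $H^{r-1}$ with $r-1\le 0$ is awkward, because multiplication by a merely bounded function is not controlled on negative Sobolev spaces. A cleaner route is therefore an induction on derivatives, with the fractional part handled by a commutator or paraproduct estimate.

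Concretely, write $r=k+\theta$ with $k=[r]$ and $\theta\in[0,1)$. By Leibniz,
\[
\dx^k(e^\Ld f)=\sum_j c_j\,\dx^{j}(e^\Ld)\,\dx^{k-j}f .
\]
Each $\dx^j(e^\Ld)$ with $j\ge1$ is $e^\Ld$ times a polynomial in $\dx\Ld,\dots,\dx^j\Ld$, so it lies in $H^{r-j}$. Its $H^{r-j}$ norm is bounded by $C(M)\|\dx\Ld\|_{H^{r}}$, by Lemma \ref{lem:product} (i) combined with the $L^\infty$ bound of $\dx\Ld$ from $H^\s$, and its $L^\infty$ norm is controlled when $j-1<\s-\frac12$. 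For the top-order piece $j=0$, namely $e^\Ld\dx^k f$, we estimate in $H^\theta$ by Lemma \ref{lem:prod2} with $r=\theta$ and $g=e^\Ld$. This gives
\[
\|e^\Ld\dx^kf\|_{H^\theta}\les e^M\|\dx^kf\|_{H^\theta}+\|\dx^kf\|_{H^{\theta-\frac12+\dl}}\|e^\Ld\dx\Ld\|_{L^2},
\]
and Lemma \ref{lem:prod2} only needs $\dx g\in L^2$, which $e^\Ld\dx\Ld$ satisfies. The terms with $j\ge1$ are products of a factor in $H^{r-j}$ with $\dx^{k-j}f$. We estimate these by Lemma \ref{lem:product} (i) in $H^\theta$, distributing norms so that the top Sobolev norm falls on one factor while the other is measured in $L^\infty$ or low regularity. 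This yields $\|f\|_{H^r}+\|\dx\Ld\|_{H^r}\|f\|_{H^\s}$ after interpolation.

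The main obstacle is this bookkeeping of intermediate terms. When both $\dx^j(e^\Ld)$ and $\dx^{k-j}f$ carry mid-level regularity, a naive bound produces $\|\dx\Ld\|_{H^{r'}}\|f\|_{H^{r''}}$ with $\s<r',r''<r$. We handle this with Gagliardo–Nirenberg interpolation: $\|\dx\Ld\|_{H^{r'}}\le\|\dx\Ld\|_{H^\s}^{1-a}\|\dx\Ld\|_{H^r}^a$ and $\|f\|_{H^{r''}}\le\|f\|_{H^\s}^{a}\|f\|_{H^r}^{1-a}$, with matching exponents, followed by Young's inequality. This reduces everything to the two endpoint products $\|f\|_{H^r}$ and $\|\dx\Ld\|_{H^r}\|f\|_{H^\s}$, with constants depending only on $M$.
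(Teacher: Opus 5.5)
Your treatment of the top-order piece $e^\Ld\dx^k f$ by Lemma~\ref{lem:prod2} at level $\theta$ is correct. The terms with $j\ge1$, however, contain a genuine gap, and this is where the substance of the lemma lies.

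You assert that $\dx^j(e^\Ld)$ is bounded in $H^{r-j}$ by $C(M)\|\dx\Ld\|_{H^r}$ ``by Lemma~\ref{lem:product}~(i) combined with the $L^\infty$ bound of $\dx\Ld$.'' This does not go through, for two reasons.
\begin{itemize}
\item Since $\dx^j(e^\Ld)=e^\Ld P_j(\dx\Ld,\dots,\dx^j\Ld)$, Lemma~\ref{lem:product}~(i) would have to absorb the factor $e^\Ld$, which lies in no $H^\rho$. This is the obstacle you pointed out at the start.
\item $P_j$ contains products such as $(\dx^2\Ld)^2$ whose factors are not bounded in $L^\infty$ in terms of $M$. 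So the $L^\infty$ bound on $\dx\Ld$ alone gives estimates that are quadratic, not linear, in the high norm.
\end{itemize}
Making them linear requires Gagliardo--Nirenberg interpolation between $\|\dx\Ld\|_{L^\infty}\les M$ and the top derivative of $\dx\Ld$ in $L^2$, with exponents summing to one. This step is missing.

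Moreover, the bound you state is not the one your Young step needs. Pairing $\|\dx^j(e^\Ld)\|_{H^\theta}\le C(M)\|\dx\Ld\|_{H^r}$ with $\|\dx^{k-j}f\|_{L^\infty}\les\|f\|_{H^{k-j+\frac12+\eps}}$ leaves $\|\dx\Ld\|_{H^r}\|f\|_{H^{k-j+\frac12+\eps}}$. This cannot be reduced to your two endpoint products once $k-j+\frac12>\s$. Your last paragraph tacitly uses tame bounds at intermediate levels, such as $\|\dx^j(e^\Ld)\|_{H^\theta}\le C(M)\|\dx\Ld\|_{H^{j-1+\theta}}$ and $\|\dx^j(e^\Ld)\|_{L^\infty}\le C(M)\|\dx\Ld\|_{H^{j-\frac12+\eps}}$. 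Those are exactly the nontrivial composition estimates. They would also require checking $r'+r''\le r+\s$ for the exponent matching; this does hold, since each $L^\infty$ placement costs only $\frac12+\eps$ derivatives. As written, that paragraph is a plan rather than an argument.

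The paper avoids all of this by applying Lemma~\ref{lem:prod2} once, at the full level $r$, with $g=e^\Ld$. Since $r-[r]-\frac12+\dl<\frac12<\s$, this gives (with your $k=[r]$)
\[
\|e^\Ld f\|_{H^r}\les e^M\|f\|_{H^r}+\|f\|_{H^\s}\|\dx^{k+1}e^\Ld\|_{L^2}.
\]
There is no Leibniz expansion on $f$ and no interpolation between $f$ and $\Ld$. It then suffices to prove the single integer-order estimate $\|\dx^{k+1}e^\Ld\|_{L^2}\le C(M)\|\dx\Ld\|_{H^r}$.
\begin{itemize}
\item By Fa\`a di Bruno's formula, this reduces to bounding $\|\prod_{\l=1}^{n}\dx^{j_\l}\Ld\|_{L^2}$ with $j_1+\dots+j_n=k+1$.
\item Factors with $j_\l=1$ go into $L^\infty$.
\item Each factor $\dx^{m_\l}\dx\Ld$ with $m_\l=j_\l-1\ge1$ goes into $L^{2(k+1-n)/m_\l}$, by Gagliardo--Nirenberg between $\|\dx\Ld\|_{L^\infty}$ and $\|\dx^{k+1-n}\dx\Ld\|_{L^2}\le\|\dx\Ld\|_{H^r}$.
\item Since $\sum_\l m_\l=k+1-n$, the powers of $\|\dx\Ld\|_{H^r}$ add up to exactly one.
\item When $n=k+1$, one uses $\|\dx\Ld\|_{L^\infty}^{k}\|\dx\Ld\|_{L^2}$ directly.
\end{itemize}
You already had the key observation that Lemma~\ref{lem:prod2} only needs a derivative of $g$ in $L^2$. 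Using it at level $r$ rather than $\theta$ eliminates the mixed terms entirely.
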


\begin{proof}
It suffices to show that
\begin{equation}
\| \dx^{[r]+1} e^\Ld \|_{L^2}
\le
C(M)
\| \dx \Ld \|_{H^r}.
\label{ggg0}
\end{equation}
Indeed, once \eqref{ggg0} is established,
Lemma \ref{lem:prod2} gives
\begin{align*}
\| e^\Ld f\|_{H^r}
&\les
\| f \|_{H^r} \| e^\Ld \|_{L^\infty}
+ \| f \|_{H^\s} \| \dx^{[r]+1} e^\Ld \|_{L^2}
\\
&\le
C(M)
( \| f \|_{H^r}
+ \| \dx \Ld \|_{H^r} \| f \|_{H^\s}).
\end{align*}

A direct calculation yields that
\begin{equation}
\| \dx^{[r]+1} e^\Ld \|_{L^2}
\les
\| e^{\Ld} \|_{L^\infty}
\sum_{k=1}^{[r]+1}
\sum_{\substack{j_1, \dots, j_k \in \N \\
j_1+ \dots+ j_k = [r]+1}}
\bigg\|
\prod_{\l=1}^k \dx^{j_\l} \Ld \bigg\|_{L^2}
.
\label{ggga}
\end{equation}
When $k=[r]+1$,
H\"older's inequality and the Sobolev embedding $H^\s(\R) \hookrightarrow L^\infty(\R)$ imply that
\begin{equation}
\bigg\|
\prod_{\l=1}^{[r]+1} \dx \Ld \bigg\|_{L^2}
\le
\| \dx \Ld \|_{L^\infty}^{[r]}
\| \dx \Ld \|_{L^2}
\les
\| \dx \Ld \|_{H^\s}^{[r]+1}
\le C(M)
\| \dx \Ld \|_{H^r}
.
\label{gggb}
\end{equation}
In what follows, we consider the case $k<[r]+1$.
For each term in the sum, set
\[
m_\l :=j_\l-1
\]
for $\l=1,\dots, k$.
Note that
\[
m_\l \ge0,
\quad
m_1+\cdots+m_k
=[r]+1-k \ge 1.
\]
Let
\[
\mm _k := (m_1, \dots, m_k).
\]
We set
\begin{align*}
L_0 (\mm_k) &:= \{ \l \in \{ 1, \dots, k \} \mid
m_\l =0 \},
\\
L_1 (\mm_k) &:= \{ \l \in \{ 1, \dots, k \} \mid
m_\l >0 \}.
\end{align*}

For $\l \in L_1(\mm_k)$,
the Gagliardo-Nirenberg inequality yields that
\begin{equation}
\begin{aligned}
\|\dx^{m_\l} \dx \Ld \|_{L^{\frac{2([r]+1-k)}{m_\l}}}
&\les
\| \dx \Ld \|_{L^\infty}^{1-\frac{m_\l}{[r]+1-k}}
\| \dx^{[r]+1-k} \dx \Ld \|_{L^2}^{\frac{m_\l}{[r]+1-k}}
\\
&\le
C(M)
\| \dx \Ld \|_{H^r}^{\frac{m_\l}{[r]+1-k}}.
\end{aligned}
\label{ggg1}
\end{equation}
It follows from H\"older's inequality and \eqref{ggg1}
that
\begin{equation}
\begin{aligned}
&\bigg\|
\prod_{\l=1}^k \dx^{m_\l} \dx \Ld
\bigg\|_{L^2}
\\
&\le
\bigg(
\prod_{\l \in L_0(\mm_k)} \|\dx \Ld \|_{L^\infty}
\bigg)
\bigg(
\prod_{\l \in L_1(\mm_k)}
\|\dx^{m_\l} \dx \Ld \|_{L^{\frac{2([r]+1-k)}{m_\l}}}
\bigg)
\\
&\le
C(M)
\prod_{\l \in L_1 (\mm_k)}
\| \dx \Ld \|_{H^r}^{\frac{m_\l}{[r]+1-k}}
\le
C(M)
\| \dx \Ld \|_{H^r}.
\end{aligned}
\label{gggc}
\end{equation}
From \eqref{ggga}, \eqref{gggb}, and \eqref{gggc},
we have \eqref{ggg0}.
\end{proof}

To prove local well-posedness in $H^s(\R)$ for $s>\frac 52$,
we use the following commutator estimate.
Set 
\[
[J^r, f] \dx g := J^r (f \dx g) - f J^r \dx g.
\]

\begin{lemma}
\label{lem:Com1}
Let $r \ge 0$.
Then,
we have
\begin{align*}
\| [J^r, f] \dx g \|_{L^2}
&\les
\| f \|_{H^{\frac 32+\dl}} \| g \|_{H^r}
+ \| f \|_{H^{r+1}} \| g \|_{H^{\frac 12+\dl}}
\end{align*}
for any $\dl>0$, 
$f \in H^{\max (r+1, \frac 32 + \dl)}(\R)$, 
and $g \in H^{\max (r, \frac 12 + \dl)}(\R)$.
\end{lemma}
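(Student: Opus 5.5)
We prove Lemma \ref{lem:Com1} by a Littlewood--Paley paraproduct decomposition, in the same spirit as the proof of Lemma \ref{lem:prod2}. With $P_N$ as there, we write
\[
f\dx g = \sum_{N_1 \ll N_2} (P_{N_1} f)\dx P_{N_2} g + \sum_{N_1 \gtrsim N_2} (P_{N_1} f) \dx P_{N_2} g,
\]
and we split $f J^r \dx g$ in the same way. The commutator then becomes a low--high commutator part
\[
\sum_{N_2} [J^r, P_{\ll N_2} f]\dx P_{N_2} g,
\]
plus the high--low and high--high parts of $J^r(f\dx g)$ and of $fJ^r\dx g$. The latter parts are estimated separately, without using any cancellation.

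For the low--high commutator we write the kernel in frequency. On the support, $\xi = \xi_1+\xi_2$ with $|\xi_1|\ll|\xi_2|\sim N_2$, and the symbol is
\[
(\jb{\xi}^r - \jb{\xi_2}^r)\, i\xi_2 .
\]
By the mean value theorem,
\[
|\jb{\xi}^r-\jb{\xi_2}^r| \les |\xi_1| N_2^{r-1}.
\]
More precisely, a first-order Taylor expansion (or a Coifman--Meyer bilinear multiplier bound) shows that the operator behaves like
\[
(P_{\ll N_2}\dx f)\, J^{r-1}\dx P_{N_2} g,
\]
up to a symbol that is smooth and bounded on the localized region. Hence, by almost orthogonality in $N_2$,
\[
\Big\|\sum_{N_2}[J^r,P_{\ll N_2}f]\dx P_{N_2}g\Big\|_{L^2}
\les \|\dx f\|_{L^\infty}\|g\|_{H^r}
\les \|f\|_{H^{\frac32+\dl}}\|g\|_{H^r}.
\]
This step, controlling the commutator symbol uniformly, is the main technical point. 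I would handle it by writing
\[
\jb{\xi_1+\xi_2}^r-\jb{\xi_2}^r = \xi_1\int_0^1 (\partial\jb{\cdot}^r)(\xi_2+\theta\xi_1)\,d\theta,
\]
and noting that the kernel of $P_{\ll N}$ is in $L^1$ uniformly in $N$. This gives an integral of translated paraproducts with uniformly bounded kernels.

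The remaining terms are handled as follows.

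\textbf{High--low part of $fJ^r\dx g$} (where $N_1\gtrsim N_2$): we bound
\[
\|(P_{N_1}f)J^r\dx P_{N_2}g\|_{L^2}\les \|P_{N_1}f\|_{L^2}\, N_2^{r+1}\|P_{N_2}g\|_{L^\infty}.
\]
By Bernstein's inequality,
\[
\|P_{N_2}g\|_{L^\infty}\les N_2^{\frac12}\|P_{N_2}g\|_{L^2}.
\]
We then distribute the powers as $N_1^{r+1}$ on $f$ and $N_2^{\frac12+\dl}$ on $g$, which leaves a factor $(N_2/N_1)^{r+1}\cdots N_2^{-\dl}$ that is summable. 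This gives the bound $\|f\|_{H^{r+1}}\|g\|_{H^{\frac12+\dl}}$. When $r$ is small this is still fine, since only $r\ge0$ is needed.

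\textbf{High--low and high--high parts of $J^r(f\dx g)$:} the output frequency is $\les N_1$, so
\[
\|J^r((P_{N_1}f)\dx P_{N_2} g)\|_{L^2}\les N_1^{r}\|P_{N_1}f\|_{L^2}\,N_2\|P_{N_2}g\|_{L^\infty}.
\]
As before, this is bounded via $\|f\|_{H^{r+1}}\|g\|_{H^{\frac12+\dl}}$, gaining $N_1^{-1}N_2\cdot N_2^{-\dl}$ after Bernstein's inequality. In the high--high regime $N_1\sim N_2$ we can use Cauchy--Schwarz over the diagonal instead.

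\textbf{Low frequencies:} terms with $N_1,N_2\les1$ are trivially bounded by $\|f\|_{L^\infty}\|g\|_{L^2}$, which is dominated by the right-hand side.

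Summing all the pieces gives the stated estimate.
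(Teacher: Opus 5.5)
Your argument is correct, but it takes a heavier route than the paper. The paper never decomposes dyadically. It writes the commutator on the Fourier side with symbol $(\jb{\xi}^r-\jb{\eta}^r)i\eta$ and proves the single pointwise bound
\[
|\jb{\xi}^r-\jb{\eta}^r|\,|\eta| \les \jb{\xi-\eta}\jb{\eta}^r+\jb{\xi-\eta}^{r+1},
\]
using the mean value theorem when $|\xi-\eta|\ll\jb{\eta}$ and a crude bound otherwise. From then on it works only with $|\ft f|$ and $|\ft g|$. Young's inequality $L^1\ast L^2\to L^2$ gives
\[
\|\jb{\cdot}\ft f\|_{L^1}\|g\|_{H^r}+\|f\|_{H^{r+1}}\|\ft g\|_{L^1},
\]
and Cauchy--Schwarz turns these $L^1$ norms into $H^{\frac32+\dl}$ and $H^{\frac12+\dl}$ norms.

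The two regions of that pointwise bound match your low--high commutator and your remaining high--low/high--high pieces. For the latter, your Bernstein computation is an equivalent way of doing what the paper does.

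The genuine difference is the low--high part. There you aim for the sharper Kato--Ponce-type bound $\|\dx f\|_{L^\infty}\|g\|_{H^r}$, which needs a real bilinear multiplier argument. The Taylor formula alone does not decouple $\xi_1$ and $\xi_2$ in $(\partial\jb{\cdot}^r)(\xi_2+\theta\xi_1)$, so the uniform $L^1$ bound on the kernel of $P_{\ll N}$ is not enough by itself. You need Coifman--Meyer, or a Fourier-series expansion of the rescaled symbol on the localized region.

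That sharper bound would matter for $L^p$ or $W^{1,\infty}$-endpoint versions, but the stated lemma does not need it. The lemma already loses $\dl$ derivatives through $\|f\|_{H^{\frac32+\dl}}$. So the size bound $|\xi_1|N_2^{r}$ on the localized symbol, combined with Young's inequality and almost orthogonality in $N_2$, already gives $\|\jb{\cdot}\ft f\|_{L^1}\|g\|_{H^r}$. That is exactly the paper's mechanism, and it avoids multiplier theory entirely.
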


The corresponding estimate on $\T$ was proved in Proposition 2.4 of \cite{KoOk25}.
For completeness, we give a proof here.

\begin{proof}
The case $r=0$ is trivial.
Assume $r>0$.
By density, it suffices to consider Schwartz functions.
We write
\begin{equation}
\Ft \big[ [J^r, f] \dx g \big] (\xi)
=
\frac 1{\sqrt{2\pi}}
\int_\R (\jb{\xi}^r - \jb{\eta}^r ) i \eta \ft f (\xi - \eta) \ft g(\eta) d \eta.
\label{Com1a}
\end{equation}
We first note that
\begin{equation}
|\jb{\xi}^r-\jb{\eta}^r| |\eta|
\les
\jb{\xi-\eta}\jb{\eta}^r
+
\jb{\xi-\eta}^{r+1}
\label{Com1b}
\end{equation}
Indeed,
if
$|\xi-\eta|\ll\jb{\eta}$,
then $\jb{\xi}\sim\jb{\eta}$, and the mean value theorem gives
\[
|\jb{\xi}^r-\jb{\eta}^r|
\les
|\xi-\eta|\jb{\eta}^{r-1},
\]
which yields the first term in \eqref{Com1b}.
If
$|\xi-\eta| \ges \jb{\eta},$
then
$\jb{\xi} \les \jb{\xi-\eta}$
and the second term on the right-hand side of \eqref{Com1b} gives the desired bound.

By Plancherel's theorem, \eqref{Com1a}, and \eqref{Com1b},
we have
\begin{align*}
\| [J^r,f]\dx g \|_{L^2}
\les
\|(\jb{\cdot}^{r+1}|\ft f|)\ast|\ft g|\|_{L^2}
+
\|(\jb{\cdot}|\ft f|) \ast(\jb{\cdot}^r|\ft g|)\|_{L^2}.
\end{align*}
Young's inequality gives
\begin{align*}
\| [J^r,f]\dx g \|_{L^2}
&\les
\|\jb{\cdot}\ft f\|_{L^1}
\| g \|_{H^r}
+
\| f \|_{H^{r+1}} \| \ft g\|_{L^1}
\\
&\les
\|f\|_{H^{\frac32+\dl}}\|g\|_{H^r}
+
\|f\|_{H^{r+1}}\|g\|_{H^{\frac12+\dl}},
\end{align*}
which proves the assertion.
\end{proof}

For later use, we record the following compactness lemma.

\begin{lemma}
\label{lem:cpt}
Let $s\in\R$, $T>0$, and
\[
\{u_n\}_{n\in\N}
\subset
C_w([0,T];H^s(\R))
\cap
AC([0,T];H^{s-2}(\R)).
\]
Assume that
\[
M_0
:=
\sup_{n\in\N}
\|u_n\|_{L_T^\infty H^s}
<\infty,
\quad
M_1
:=
\sup_{n\in\N}
\|\dt u_n\|_{L_T^\infty H^{s-2}}
<\infty.
\]
Then, there exist a subsequence, still denoted by
$\{u_n\}_{n\in\N}$, and
\[
u
\in
C_w([0,T];H^s(\R))
\cap
AC([0,T];H^{s-2}(\R))
\]
such that
\begin{align*}
u_n
& \wsl 
u
\quad\text{in }
L^\infty([0,T];H^s(\R)),
\\
\dt u_n
& \wsl 
\dt u
\quad\text{in }
L^\infty([0,T];H^{s-2}(\R)),
\end{align*}
and
\[
\sup_{t\in[0,T]}
\|u(t)\|_{H^s}
\le M_0,
\quad
\|\dt u\|_{L_T^\infty H^{s-2}}
\le M_1.
\]
Moreover,
if $s>\frac 32$,
then
\[
u_n\to u,
\quad
\dx u_n\to\dx u
\]
locally uniformly on $[0,T]\times\R$.

Furthermore, if
\[
u_n(0)\to\phi
\]
in the sense of distributions on $\R$, then
$u(0)=\phi$.
\end{lemma}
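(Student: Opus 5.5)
The plan is a standard Banach--Alaoglu argument combined with an Aubin--Lions type compactness step and a diagonal extraction.

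\emph{Weak-$*$ limits.} Since $L^\infty([0,T];H^s(\R))$ is the dual of the separable space $L^1([0,T];H^{-s}(\R))$, and similarly for $H^{s-2}$, Banach--Alaoglu gives a subsequence with $u_n \wsl u$ in $L^\infty_T H^s$ and $\dt u_n \wsl v$ in $L^\infty_T H^{s-2}$. Testing against $\chi(t)\varphi$ with $\chi\in C_c^\infty((0,T))$ and $\varphi$ Schwartz identifies $v=\dt u$ in the distributional sense. The representative
\[
u(t)=u(0)+\int_0^t v(t')\,dt'
\]
is then Lipschitz into $H^{s-2}$, hence lies in $AC([0,T];H^{s-2})$, with $\|\dt u\|_{L^\infty_T H^{s-2}}\le M_1$ by weak-$*$ lower semicontinuity.

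\emph{Pointwise-in-time convergence.} The uniform Lipschitz bound $\|u_n(t)-u_n(t')\|_{H^{s-2}}\le M_1|t-t'|$ and $\sup_t\|u_n(t)\|_{H^s}\le M_0$ (from \eqref{NNN1}) are the key inputs. For a countable dense set of times and a countable dense set of test functions $\varphi\in\mathcal S$, a diagonal argument gives convergence of $(u_n(t),\varphi)$. Equicontinuity in $H^{s-2}$ then upgrades this to: $u_n(t)\rightharpoonup \tilde u(t)$ weakly in $H^s$ for every $t$, uniformly in $t$ when paired with $\varphi\in\mathcal S$. We identify $\tilde u=u$ a.e., and hence everywhere for the continuous representative. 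Weak continuity of $t\mapsto u(t)$ in $H^s$ follows from weak continuity in $H^{s-2}$ together with the uniform $H^s$ bound, which also gives $\sup_t\|u(t)\|_{H^s}\le M_0$ by weak lower semicontinuity of the norm. In particular $u_n(0)\rightharpoonup u(0)$, so if $u_n(0)\to\phi$ in distributions then $u(0)=\phi$.

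\emph{Local uniform convergence.} This is the main step. Fix $R>0$ and a cutoff $\theta\in C_c^\infty$ with $\theta=1$ on $[-R,R]$, and pick $\frac32<\s<s$. The functions $\theta u_n$ are bounded in $C([0,T];H^{s-2})$ with a uniform Lipschitz constant, and lie in a bounded subset of $H^s$ with support in a fixed compact set. By Rellich, $H^s$ functions supported in a fixed compact set embed compactly into $H^\s$. Interpolating,
\[
\|f\|_{H^\s}\le \|f\|_{H^{s-2}}^{1-\ta}\|f\|_{H^s}^{\ta},
\]
so the family $\theta u_n$ is equicontinuous in $C([0,T];H^\s)$. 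Arzel\`a--Ascoli then applies, and a further diagonal extraction in $R\in\N$ gives $\theta u_n\to\theta u$ in $C([0,T];H^\s)$; the limit is identified by the weak convergence already established. Since $H^\s\hookrightarrow W^{1,\infty}$ for $\s>\frac32$, this yields $u_n\to u$ and $\dx u_n\to\dx u$ uniformly on $[0,T]\times[-R,R]$.
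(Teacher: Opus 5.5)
Your argument is correct and has the same architecture as the paper's: Banach--Alaoglu, compactness of the cut-off sequences in $C([0,T];H^{\sigma})$, a diagonal extraction, and the embedding $H^\sigma\hookrightarrow W^{1,\infty}$. The difference is that you carry out the compactness step by hand instead of citing it. The paper applies Simon's Aubin--Lions-type compactness theorem to $\chi_k u_n$, which gives convergence in $C([0,T];H^{s-\delta})$ for every $s\in\R$. It then reads off $u(0)=\phi$ directly from this strong local convergence at $t=0$. You replace that theorem with Rellich, interpolation between $H^{s-2}$ and $H^s$, and Arzel\`a--Ascoli, which is essentially how Simon's result is proved in this Hilbert-scale setting. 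You also obtain the $C_w$ property and $u(0)=\phi$ from a separate pointwise-in-time weak convergence step. The paper's route is shorter, since one cited result delivers all the conclusions, including the initial datum, for arbitrary $s$. Yours is self-contained and yields slightly more: $u_n(t)\rightharpoonup u(t)$ in $H^s$ for every $t$, and an explicit H\"older modulus for $\theta u_n$ in $H^\sigma$. It also makes explicit the identification of the $H^{s-2}$-continuous representative, which the paper leaves implicit. Two small fixes are needed. Your interpolation inequality requires $\sigma\ge s-2$, so take $\sigma\in(\max(\frac32,s-2),s)$, or use $\|f\|_{H^\sigma}\le\|f\|_{H^{s-2}}$ when $\sigma\le s-2$. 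In the first step, write the representative as $u_0+\int_0^t v(t')\,dt'$ for some $u_0\in H^{s-2}$, since $u(0)$ is not yet defined for an element of $L^\infty([0,T];H^s(\R))$.
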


\begin{proof}
By the Banach-Alaoglu theorem, after taking a subsequence,
there exist
\[
u\in L^\infty([0,T];H^s(\R)),
\quad
v\in L^\infty([0,T];H^{s-2}(\R))
\]
such that
\[
u_n \wsl u
\text{ in }
L^\infty([0,T];H^s(\R)),
\quad
\dt u_n \wsl v
\text{ in }
L^\infty([0,T];H^{s-2}(\R)).
\]
Passing to the limit in the definition of the distributional time derivative,
we obtain
\[
v=\dt u.
\]
Hence,
$u\in AC([0,T];H^{s-2}(\R))$,
and
\[
\|u\|_{L_T^\infty H^s}\le M_0,
\quad
\|\dt u\|_{L_T^\infty H^{s-2}}\le M_1.
\]
The $H^{s-2}(\R)$-continuous representative of $u$ belongs to
$C_w([0,T];H^s(\R))$ and
\eqref{NNN1} gives
\[
\sup_{t\in[0,T]}\|u(t)\|_{H^s}\le M_0.
\]

Fix $0<\dl<2$.
For $k \in \N$, let $\chi_k \in C_c^\infty(\R)$ satisfy $\chi_k(x) =1$ for $|x| \le k$.
Since
\[
\supp (\chi_k u_n)
\subset \supp \chi_k,
\quad
\sup_{n \in \N} (\| \chi_k u_n \|_{L_T^\infty H^s}
+ \| \dt( \chi_k u_n) \|_{L_T^\infty H^{s-2}}) < \infty,
\]
the compactness theorem \cite[Corollary 4, p.85]{Simon87} yields,
after taking a subsequence,
that there exists $U_k \in C([0,T];H^{s-\dl}(\R))$ satisfying
\[
\chi_k u_n\to U_k
\quad\text{in }C([0,T];H^{s-\dl}(\R)).
\]
The weak-$\ast$ convergence implies $U_k=\chi_k u$.
A diagonal argument yields,
after taking a further subsequence,
that
\begin{equation}
\chi u_n\to\chi u
\quad\text{in }
C([0,T];H^{s-\dl}(\R))
\label{eq:cpt01}
\end{equation}
for every
$\chi\in C_c^\infty(\R)$.

If $s>\frac 32$,
we can choose $\dl$ satisfying $0<\dl< s-\frac 32$.
Then,
Sobolev embedding gives
\[
H^{s-\dl}(\R)\subset W^{1,\infty}(\R),
\]
and hence
\[
u_n\to u,
\quad
\dx u_n\to\dx u
\]
locally uniformly on $[0,T]\times\R$.

Finally, if $u_n(0)\to\phi$ in the sense of distributions, then
\eqref{eq:cpt01} at $t=0$ implies
\[
\chi u(0)=\chi\phi
\]
for every $\chi\in C_c^\infty(\R)$.
Thus, $u(0)=\phi$.
\end{proof}

\section{Characterization of the Mizohata condition}
\label{sec:miz}

In this section,
we characterize the Mizohata condition \eqref{Fbez} in terms of the derivatives of $F$ at the origin.
For $s>\frac 32$, let
\[
\mathcal G_s
:=
\bigg\{
\phi \in H^s(\R)
\mid
\sup_{x\in\R}
\bigg|
\int_0^x
\Im F_\be(\phi,\dx \phi,\cj \phi,\cj{\dx \phi})(y) dy
\bigg|
<\infty
\bigg\}
.
\]

\begin{proposition}
\label{prop:Miz}
Let $s>\frac 32$ and $F\in C^3(\C^2;\C)$.  The following are equivalent.

\begin{enumerate}
\item
$\Gl_s=H^s(\R)$.

\item
$\Gl_s$ has nonempty interior in $H^s(\R)$.

\item
The following two conditions hold.
\[
\Im F_\be(0)=0,
\quad
F_{\cj\al\be}(0)=\cj{F_{\al\be}(0)}.
\]
\end{enumerate}
When these conditions hold, define the real-valued functions
\begin{align}
\Phi(\al,\cj\al)
&:=
\Im ( F_{\be\be}(0)\al +F_{\cj\be\be}(0)\cj\al ),
\label{MI2}
\\
\begin{split}
G(\al,\be,\cj\al,\cj\be)
&:=\Im F_\be(\al,\be,\cj\al,\cj\be)
-
\Im ( F_{\be\be}(0)\be + F_{\cj\be\be} (0) \cj \be ).
\end{split}
\label{MI3}
\end{align}
Then,
we have
\begin{align}
&\Im F_\be
(\phi,\dx \phi,\cj \phi,\cj{\dx \phi})
=\dx \Phi(\phi,\cj \phi)
+ G (\phi,\dx \phi,\cj \phi,\cj{\dx \phi}),
\label{MI4}
\\
&
\| \Phi(\phi, \cj \phi) \|_{L^\infty}
+
\| G(\phi,\dx \phi,\cj \phi,\cj{\dx \phi}) \|_{L^1}
\le C(\| \phi \|_{H^s})
\label{MI5}
\end{align}
for $\phi \in H^s(\R)$.
\end{proposition}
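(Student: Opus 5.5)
The plan is to prove (i)$\Rightarrow$(ii) trivially, then (iii)$\Rightarrow$(i) together with \eqref{MI4}--\eqref{MI5}, and finally (ii)$\Rightarrow$(iii) by contraposition.

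\textbf{Taylor decomposition.} We Taylor expand $F_\be$ at the origin to first order with integral remainder:
\[
F_\be(\al,\be,\cj\al,\cj\be)=F_\be(0)+F_{\al\be}(0)\al+F_{\cj\al\be}(0)\cj\al+F_{\be\be}(0)\be+F_{\cj\be\be}(0)\cj\be+R(\al,\be,\cj\al,\cj\be).
\]
Since $F\in C^3$, the remainder satisfies $|R|\le C(K)(|\al|^2+|\be|^2)$ on $\{|\al|+|\be|\le K\}$.

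For $\phi\in H^s$ with $s>\frac32$, we have $\phi,\dx\phi\in L^2\cap L^\infty$ with norms bounded by $\|\phi\|_{H^s}$. Hence
\[
\|R(\phi,\dx\phi,\cj\phi,\cj{\dx\phi})\|_{L^1}\le C(\|\phi\|_{H^s})\|\phi\|_{H^1}^2.
\]

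\textbf{(iii)$\Rightarrow$(i).} Under (iii), $\Im F_\be(0)=0$. Moreover,
\[
\Im\big(F_{\al\be}(0)\al+F_{\cj\al\be}(0)\cj\al\big)=\Im\big(F_{\al\be}(0)\al+\cj{F_{\al\be}(0)\al}\big)=0,
\]
so the terms linear in $\phi,\cj\phi$ drop out. By definition \eqref{MI3}, $G=\Im R$ plus the vanishing terms, so $\|G(\phi,\dots)\|_{L^1}\le C(\|\phi\|_{H^s})$.

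For the derivative terms, $\Im(F_{\be\be}(0)\dx\phi+F_{\cj\be\be}(0)\cj{\dx\phi})=\dx\Phi(\phi,\cj\phi)$ with $\Phi$ as in \eqref{MI2}, since $\cj{\dx\phi}=\dx\cj\phi$. Also $\|\Phi\|_{L^\infty}\lesssim\|\phi\|_{L^\infty}$. This gives \eqref{MI4} and \eqref{MI5}.

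The primitive $\int_0^x\Im F_\be\,dy$ is then bounded by $2\|\Phi\|_{L^\infty}+\|G\|_{L^1}$. Thus $\Gl_s=H^s$. Note that (iii) is only used to kill the constant and $\al$-linear terms, while the decomposition itself is general.

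\textbf{(ii)$\Rightarrow$(iii): setup.} Without assuming (iii), the same computation shows
\[
\phi\in\Gl_s \iff \sup_x\Big|\int_0^x \Ell_\phi(y)\,dy\Big|<\infty,
\qquad
\Ell_\phi:=c_0+\Im\big(a\phi+b\cj\phi\big)=c_0+\Re\big(\kappa\,\phi\big),
\]
where $c_0=\Im F_\be(0)$, $a=F_{\al\be}(0)$, $b=F_{\cj\al\be}(0)$, and $\kappa:=-i(a-\cj b)$. Indeed,
\[
\Im(a\phi+b\cj\phi)=\Im(a\phi)-\Im(\cj b\phi)=\Im((a-\cj b)\phi).
\]
So (iii) fails precisely when $(c_0,\kappa)\neq(0,0)$.

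Suppose $\phi_0+B_{H^s}(0,\rho)\subset\Gl_s$. The set of admissible $\phi$ is a linear condition on $\phi$ up to the constant $c_0$. Taking the difference of $\phi_0+\eta$ and $\phi_0$ shows that $\int_0^x\Re(\kappa\eta)\,dy$ must be bounded for every small $\eta\in H^s$, hence by scaling for every $\eta\in H^s$.

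\textbf{(ii)$\Rightarrow$(iii): the two cases.} If $\kappa\ne0$, choose $\eta$ with $\Re(\kappa\eta)=\jb{x}^{-1}$ (for instance $\eta=\cj\kappa|\kappa|^{-2}\jb{x}^{-1}\in H^s$). Its primitive grows like $\log x$, a contradiction. So $\kappa=0$.

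Then $\Ell_\phi=c_0$, and the primitive $c_0x$ is bounded only if $c_0=0$. Both conditions in (iii) therefore hold.

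The main care point is the $C^3$ remainder bound uniform on bounded sets, which is routine. The only genuine idea is recognizing that the $\al$-linear part reduces to $\Re(\kappa\phi)$, which can be made non-integrable.
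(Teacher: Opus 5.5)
Your proof is correct and is essentially the paper's argument. You use the same first-order Taylor splitting of $\Im F_\be$ into a constant, an $\alpha$-linear part, $\dx\Phi$, and an $L^1$ remainder, and the same $\jb{x}^{-1}$ perturbation in a direction where the $\alpha$-linear part is nonzero. The only difference is organizational: you first observe, without assuming (iii), that $\phi\in\Gl_s$ exactly when $c_0+\Re(\kappa\phi)$ has bounded primitive. This lets you skip the paper's mean-value estimate on the difference of the two remainders, since each is already in $L^1$ by the quadratic bound. It also gives $\Im F_\be(0)=0$ from the primitive $c_0x$ rather than from the limit of $\Im F_\be(\phi,\dx\phi,\cj\phi,\cj{\dx\phi})$ at infinity.
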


\begin{proof}

Define
\[
\begin{aligned}
R(\al,\be,\cj\al,\cj\be)
&:=
\Im F_\be(\al,\be,\cj\al,\cj\be)
-\Im F_\be(0)
\\
&\quad
-\Im (F_{\al\be}(0)\al+F_{\cj\al\be}(0)\cj\al)
\\
&\quad
-\Im (F_{\be\be}(0)\be+F_{\cj\be\be}(0)\cj\be).
\end{aligned}
\]
Then,
by definition,
\begin{equation}
\begin{aligned}
\Im F_\be(\al,\be,\cj\al,\cj\be)
&=
\Im F_\be(0)
+\Im (F_{\al\be}(0)\al+F_{\cj\al\be}(0)\cj\al)
\\
&\quad
+\Im (F_{\be\be}(0)\be+F_{\cj\be\be}(0)\cj\be)
+R(\al,\be,\cj\al,\cj\be).
\end{aligned}
\label{MI6}
\end{equation}
Notice that
\[
R(0)=0,
\quad
dR(0)=0,
\]
where
\[
dR:=(R_\al,R_\be,R_{\cj\al},R_{\cj\be}).
\]
Fix $M>0$.
Since $F\in C^3(\C^2;\C)$,
all second-order derivatives of $R$ are bounded on
$\{X\in\C^2:|X|\le M \}$.
By the mean value theorem,
there exists $C>0$ such that
\begin{equation}
|R(X)|\le C|X|^2,
\quad
|dR(X)|\le C|X|
\label{MI7}
\end{equation}
for $X\in\C^2$ with $|X|\le M$.

Assume (iii).
Then, $\Im F_\be (0)=0$ and
\[
\Im (F_{\al\be}(0)\al +F_{\cj\al\be}(0)\cj\al )
= \Im ( (F_{\al\be}(0) - \cj{F_{\cj\al\be}(0)}) \al )
= 0
\]
for every $\al \in \C$.
Hence,
the function $G$ in \eqref{MI3} is precisely the remainder $R$ in \eqref{MI6}.
With \eqref{MI2},
we have
\[
\dx\Phi( \phi,\cj \phi)
=
\Im (F_{\be\be}(0)\dx \phi + F_{\cj\be\be}(0)\cj{\dx \phi} ),
\]
which gives \eqref{MI4}.
Since $s>\frac 32$,
$\phi, \dx \phi \in L^2 (\R) \cap L^\infty(\R)$
for $\phi \in H^s(\R)$.
Thus,
\eqref{MI7}
with $M = 2(\| \phi \|_{L^\infty} + \| \dx \phi \|_{L^\infty} +1)$
implies \eqref{MI5}.

Integrating
\eqref{MI4} from $0$ to $x$, we obtain $\phi \in \Gl_s$.
This proves (iii)$\implies$(i).

Clearly, (i)$\implies$(ii).
It remains to prove (ii)$\implies$(iii).  Suppose that, for some
$\phi\in H^s(\R)$ and $\rho>0$,
\[
B_{H^s}(\phi,\rho)\subset\Gl_s.
\]
Since $s>\frac 32$, both $\phi$ and $\dx\phi$ are continuous and satisfy
\[
\lim_{|x| \to \infty} \phi(x)
= \lim_{|x| \to \infty} \dx \phi(x)
= 0.
\]
Therefore,
\[
\lim_{|x| \to \infty}
\Im F_\be (\phi,\dx\phi,\cj\phi,\cj{\dx\phi}) (x)
=
\Im F_\be (0).
\]
Since $\phi\in\Gl_s$,
the primitive in \eqref{Fbez} is bounded, which forces $\Im F_\be(0)=0$.

Set
\[
L(\al)
:=
\Im (F_{\al\be}(0) \al +F_{\cj\al\be}(0)\cj \al)
= \Im ( (F_{\al\be}(0) - \cj{F_{\cj\al\be}(0)} ) \al).
\]
Suppose that $L\not\equiv0$.
Choose $\al_\ast  \in \C$ with $L(\al_\ast ) \ne 0$
and put
\[
e(x) := \jb{x}^{-1}.
\]
Then,
$e \in H^k(\R)$ for every $k \in \N$,
whereas $e \notin L^1(\R)$.
By choosing a sufficiently small constant $\eta>0$,
we have
\[
\phi_\eta:=\phi+\eta \al_\ast e
\in B_{H^s}(\phi,\rho).
\]
Set
\[
X[\psi] := (\psi,\dx\psi,\cj\psi,\cj{\dx\psi}).
\]
We then obtain from \eqref{MI6}
\begin{equation}
\begin{aligned}
&\Im F_\be (X[\phi_\eta]) - \Im F_\be (X[\phi])
\\
&=\eta L(\al_\ast )e +
\eta\dx\Im (F_{\be\be}(0) \al_\ast  e +F_{\cj\be\be}(0)\cj{\al_\ast } e )
\\
&\quad
+
(R(X[\phi_\eta])-R(X[\phi])).
\end{aligned}
\label{MI8}
\end{equation}
By \eqref{MI7} with $M = 2(\| X[\phi] \|_{L^\infty} + \| X[\phi_\eta] \|_{L^\infty} + 1)$
and the mean value theorem,
we have
\[
|R(X[\phi_\eta])-R(X[\phi])|
\les
(|X[\phi]|+|X[\phi_\eta]|) e.
\]
The right-hand side belongs to $L^1(\R)$ because
\[
X[\phi], \, X[\phi_\eta] \in (L^2 (\R) \cap L^\infty(\R))^4.
\]
Thus,
the second and third terms in
\eqref{MI8} have bounded primitives, while the
first does not.  Since $\phi\in\Gl_s$, it follows that
$\phi_\eta\notin\Gl_s$, a contradiction.
Hence, $L\equiv0$, which is equivalent to $F_{\cj\al\be}(0)=\cj{F_{\al\be}(0)}$.
This proves (iii).
\end{proof}

\section{Well-posedness}
\label{sec:suff}

In this section,
we prove local well-posedness for \eqref{NLS} under the condition \eqref{eq:condition}.

\subsection{Existence and uniqueness}
\label{SUBSEC:energy}

To construct a solution to \eqref{NLS},
we consider the Cauchy problem for the following parabolic regularization of \eqref{NLS}:
\begin{equation}
\left\{
\begin{aligned}
&\dt u^{\eps}+(i-\eps)\dx^2u^{\eps}
=F(u^{\eps},\dx u^{\eps},\cj{u^{\eps}},\cj{\dx u^{\eps}}),\\
&u^{\eps}(0,x)=\phi(x)
\end{aligned}
\right.
\label{RNLS}
\end{equation}
for $\eps>0$, $t>0$, and $x\in\R$.
The following local existence result follows from a standard contraction mapping argument.
We omit the proof;
see, for example, Lemma 3 in \cite{Chi94}.

\begin{proposition}
\label{prop:exRNLS}
Let $\eps>0$ and $s>\frac52$.
Assume that $F(0)=0$.
Then, for any $\phi\in H^s(\R)$,
there exist $T_\eps>0$ and a unique solution
\[
u^\eps\in C([0,T_\eps];H^s(\R))
\]
to \eqref{RNLS}.
Moreover, if $\phi\in H^r(\R)$ for some $r\ge s$, then
$u^\eps\in C([0,T_\eps];H^r(\R))$ and
$u^\eps\in C((0,T_\eps];H^\infty(\R))$,
where
$H^\infty (\R) := \bigcap_{k \in \N} H^k(\R)$.
\end{proposition}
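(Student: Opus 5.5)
The plan is to write \eqref{RNLS} in Duhamel form with the analytic semigroup $S_\eps(t):=e^{-t(i-\eps)\dx^2}$, whose Fourier symbol is $e^{t(i-\eps)\xi^2}$ with modulus $e^{-\eps t\xi^2}$:
\[
\Gamma[u](t):=S_\eps(t)\phi+\int_0^t S_\eps(t-t')F(u,\dx u,\cj u,\cj{\dx u})(t')\,dt'.
\]
The first step is the parabolic smoothing bound
\[
\|S_\eps(t)f\|_{H^{s}}\les \bigl(1+(\eps t)^{-\frac12}\bigr)\|f\|_{H^{s-1}},
\]
which follows from $\sup_\xi |\xi| e^{-\eps t\xi^2}\les (\eps t)^{-1/2}$. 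Together with \eqref{comp1} (note $F(0)=0$), this gives
\[
\|\Gamma[u]\|_{L^\infty_TH^s}\le \|\phi\|_{H^s}+C(\eps)\,(T+T^{\frac12})\,C(R)\,R
\]
on the ball $\{\|u\|_{L^\infty_TH^s}\le R\}$ in $C([0,T];H^s)$. Here $R=2\|\phi\|_{H^s}$ and $M$ in Lemma~\ref{lem:comp} is controlled by $R$ via $H^{s}\hookrightarrow W^{1,\infty}$ (valid since $s>\frac32$).

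For the contraction, I would use \eqref{comp2} with $r=s$. The $W^{1,\infty}$ difference is bounded by the $H^s$ difference, so $\|\Gamma[u]-\Gamma[v]\|_{L^\infty_TH^s}\le C(\eps,R)T^{1/2}\|u-v\|_{L^\infty_TH^s}$. Taking $T_\eps$ small then gives a unique fixed point. Continuity in time with values in $H^s$ follows from strong continuity of $S_\eps$ and of the Duhamel integral; the integrable singularity $(t-t')^{-1/2}$ allows dominated convergence. Uniqueness in $C([0,T_\eps];H^s)$ among all solutions comes from the same difference estimate combined with a continuity argument on short subintervals.

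For persistence of regularity $r\ge s$, I would run the same argument in $H^r$. The existence time would then depend only on the $H^s$ norm, because \eqref{comp2} and \eqref{comp1} are linear in the top norm with constants depending on $W^{1,\infty}$ bounds; alternatively, a Gronwall argument on the Duhamel formula gives $\|u\|_{H^r}$ bounds on all of $[0,T_\eps]$. For the $H^\infty$ smoothing at $t>0$, I would bootstrap: if $u\in C([0,T];H^{r})$, then $F[u]\in C([0,T];H^{r-1})$, and the Duhamel term gains almost one derivative. Indeed, $\int_0^t\|S_\eps(t-t')\|_{H^{r-1}\to H^{r+1-\delta}}\,dt'<\infty$ because the operator norm is $\les(\eps(t-t'))^{-(2-\delta)/2}$. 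The free part $S_\eps(t)\phi$ lies in $H^\infty$ for $t>0$. Iterating on $[t_0,T]$ with restarted data $u(t_0)$ gives $u\in C([t_0,T_\eps];H^k)$ for every $k$, and hence $u\in C((0,T_\eps];H^\infty)$.

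I expect the main obstacle to be the bootstrap step. One must arrange that the iterated smoothing yields continuity into every $H^k$ uniformly away from $t=0$. To handle this, I would restart at $t_0/2$, $3t_0/4$, and so on, applying a fixed finite derivative gain at each stage. Uniqueness in the higher norms ensures that the restarted solutions coincide with $u$.
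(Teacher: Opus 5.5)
Your proposal is correct. It is the standard Duhamel/contraction-mapping argument that the paper has in mind: the paper omits the proof and refers to a standard contraction argument (Lemma~3 of Chihara), and your parabolic-smoothing bootstrap for $u^\eps\in C((0,T_\eps];H^\infty(\R))$ is the expected complement.

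One small caveat concerns keeping the same interval $[0,T_\eps]$ at regularity $H^r$. Contracting in the full $H^r$ norm gives a Lipschitz constant that depends on $\|\phi\|_{H^r}$, through the factor $(\|f\|_{H^r}+\|g\|_{H^r})$ in \eqref{comp2}. So you should not appeal to \eqref{comp2} being linear in the top norm. Instead, contract in the $H^s$ metric on an $H^r$-bounded set, or use your Gronwall-type alternative, which works as you describe.
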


Let $\psi\in H^s(\R)$ and $\rho>0$ be fixed.
For $\phi\in B_{H^s}(\psi,\rho)$,
we derive estimates for the solution to \eqref{RNLS} on a time interval independent of $\eps\in(0,1)$,
and then pass to the limit $\eps \downarrow 0$.

Let $u^\eps$ be the solution obtained in Proposition \ref{prop:exRNLS} and set
\[
v^\eps:=\dx u^\eps,
\quad
w^\eps:=\dx^2u^\eps.
\]
For simplicity, we write
\[
F^\eps=F(u^\eps,v^\eps,\cj{u^\eps},\cj{v^\eps}),
\quad
F_\al^\eps=F_\al(u^\eps,v^\eps,\cj{u^\eps},\cj{v^\eps}),
\quad
\cdots.
\]
Then,
\begin{equation}
\dt u^\eps+(i-\eps)\dx^2u^\eps=F^\eps,
\label{ueps}
\end{equation}
\begin{equation}
\dt v^\eps+(i-\eps)\dx^2v^\eps
=F_\al^\eps v^\eps+F_\be^\eps w^\eps
+F_{\cj\al}^\eps\cj{v^\eps}+F_{\cj\be}^\eps\cj{w^\eps},
\label{veps}
\end{equation}
and
\begin{equation}
\dt w^\eps+(i-\eps)\dx^2w^\eps
=F_\be^\eps\dx w^\eps+F_{\cj\be}^\eps\cj{\dx w^\eps}+R^\eps,
\label{weps}
\end{equation}
where
\begin{equation}
\begin{aligned}
R^\eps
&:=
F_\al^\eps w^\eps+F_{\cj\al}^\eps\cj{w^\eps}
\\
&\quad+
(F_{\al\al}^\eps v^\eps+F_{\al\be}^\eps w^\eps
+F_{\al\cj\al}^\eps\cj{v^\eps}+F_{\al\cj\be}^\eps\cj{w^\eps})v^\eps
\\
&\quad+
(F_{\be \al}^\eps v^\eps+F_{\be\be}^\eps w^\eps
+F_{\be\cj\al}^\eps\cj{v^\eps}+F_{\be\cj\be}^\eps\cj{w^\eps})w^\eps
\\
&\quad+
(F_{\cj \al \al}^\eps v^\eps+F_{\cj\al \be}^\eps w^\eps
+F_{\cj\al\cj\al}^\eps\cj{v^\eps}+F_{\cj\al\cj\be}^\eps\cj{w^\eps})\cj{v^\eps}
\\
&\quad+
(F_{\cj\be \al}^\eps v^\eps+F_{\cj\be \be}^\eps w^\eps
+F_{\cj\be \cj\al}^\eps\cj{v^\eps}+F_{\cj\be\cj\be}^\eps\cj{w^\eps})\cj{w^\eps}.
\end{aligned}
\label{Reps}
\end{equation}

We eliminate the problematic part of $F_\be^\eps\dx w^\eps$ by the gauge transformation
\begin{align}
\Ld^\eps(t,x)
&:=-\frac12\int_0^x\Im F_\be^\eps(t,y)dy,
\label{Ldep}
\\
W^\eps(t,x)
&:=e^{\Ld^\eps(t,x)}w^\eps(t,x).
\label{Wga}
\end{align}

\begin{lemma}
\label{lem:gggb}
Let $s,r\in\R$ satisfy
\begin{equation}
\frac52<s\le r.
\label{condsr0}
\end{equation}
Assume that $F$ satisfies \eqref{eq:condition}.
Then,
we have
\begin{align*}
\| \Ld^\eps(t) \|_{L^\infty}
&\le C(\|u^\eps(t)\|_{H^{s-1}}),
\\
\|\dx \Ld^\eps(t)\|_{H^{r-1}}
&\le
C(\| u^\eps(t) \|_{H^{s-1}})\|u^\eps(t)\|_{H^{r}}
\end{align*}
for $\eps\in(0,1)$ and $t\in(0,T_\eps)$.
\end{lemma}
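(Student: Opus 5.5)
Since $F$ satisfies \eqref{eq:condition}, Proposition \ref{prop:Miz} applies with $\phi = u^\eps(t)$. Note that $u^\eps(t)\in H^s(\R)$ and $s-1>\frac32$, so the hypothesis of Proposition \ref{prop:Miz} is met even at regularity $s-1$. The first step is to insert the decomposition \eqref{MI4} into \eqref{Ldep}. Since $u^\eps(t,y)\to0$ as $|y|\to\infty$, this gives
\[
\Ld^\eps(t,x)
=
-\frac12\big(\Phi(u^\eps,\cj{u^\eps})(t,x)-\Phi(u^\eps,\cj{u^\eps})(t,0)\big)
-\frac12\int_0^x G(u^\eps,v^\eps,\cj{u^\eps},\cj{v^\eps})(t,y)\,dy .
\]
Applying \eqref{MI5} with $s$ replaced by $s-1$ then yields
\[
\|\Ld^\eps(t)\|_{L^\infty}\le \|\Phi\|_{L^\infty}+\tfrac12\|G\|_{L^1}\le C(\|u^\eps(t)\|_{H^{s-1}}),
\]
which is the first bound.

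For the second bound, we have $\dx\Ld^\eps=-\frac12\Im F_\be^\eps$, so it suffices to estimate $\|\Im F_\be^\eps\|_{H^{r-1}}$. Here $F_\be(0)$ need not vanish; only its imaginary part does, and that is enough. The function $H:=\Im F_\be$ is smooth, real-valued, and satisfies $H(0)=0$. Applying Lemma \ref{lem:comp}, estimate \eqref{comp1}, to $H$ with exponent $r>1$ gives
\[
\|H(u^\eps,v^\eps,\cj{u^\eps},\cj{v^\eps})\|_{H^{r-1}}\le C\|u^\eps\|_{H^r}.
\]
In this estimate $C=C(r,F,M)$, where $M$ bounds $\|u^\eps\|_{W^{1,\infty}}$. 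Since $s-1>\frac32$, Sobolev embedding gives $\|u^\eps\|_{W^{1,\infty}}\les\|u^\eps\|_{H^{s-1}}$, so the constant depends only on $\|u^\eps(t)\|_{H^{s-1}}$. Lemma \ref{lem:comp} is stated for complex $F$, but it applies equally to $\Im F_\be$ regarded as an element of $C^\infty(\C^2;\C)$.

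The main point to check is that the constant in \eqref{comp1} can be taken monotone in $M$, so that it is a genuine function of $\|u^\eps\|_{H^{s-1}}$ alone. This follows because $M$ enters only through bounds on derivatives of $F$ over the ball of radius $M$. We also need \eqref{MI5} to be uniform in $\eps$, which holds since its constant depends only on the norm of the function. Finally, $r\ge s$ guarantees $u^\eps(t)\in H^r$ for $t\in(0,T_\eps)$ by Proposition \ref{prop:exRNLS}; when $\phi\notin H^r$, the right-hand side of the second bound is infinite and that bound holds trivially.
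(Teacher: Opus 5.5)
Your proof is correct and is essentially the paper's argument. For the $L^\infty$ bound you insert the decomposition \eqref{MI4} into \eqref{Ldep} and apply \eqref{MI5} at regularity $s-1>\frac32$; for the $H^{r-1}$ bound you apply \eqref{comp1} to $\Im F_\be$, which vanishes at the origin. One small correction to your closing remark: since $\phi\in H^s(\R)$, Proposition \ref{prop:exRNLS} (with $r=s$) already gives $u^\eps(t)\in H^\infty(\R)$ for every $t\in(0,T_\eps)$, so the right-hand side is never infinite there, although this does not affect the argument.
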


\begin{proof}
For simplicity, we suppress the time dependence.
As in \eqref{MI3},
we define
\begin{equation}
G^\eps
:=\Im F_\be^\eps
-
\Im ( F_{\be\be}(0) v^\eps + F_{\cj\be\be} (0) \cj{v^\eps} ).
\label{Gep}
\end{equation}
It follows from \eqref{MI5} that
\begin{equation}
\| G^\eps \|_{L^1}
\le C(\| u^\eps \|_{H^{s-1}}).
\label{R3g}
\end{equation}
Using \eqref{MI4}, \eqref{Ldep}, and \eqref{Gep},
we have
\begin{align*}
-2 \Ld^\eps (t,x)
&=
\Im (F_{\be \be}(0) u^\eps (t,x) + F_{\cj \be \be}(0) \cj{u^\eps} (t,x))
\\
&\quad
- \Im (F_{\be \be}(0) u^\eps (t,0) + F_{\cj \be \be}(0) \cj{u^\eps} (t,0))
\\
&\quad
+ \int_0^x G^\eps (t,y) dy.
\end{align*}
From \eqref{R3g},
we obtain
\[
\| \Ld^\eps \|_{L^\infty}
\le C(\|u^\eps\|_{H^{s-1}}).
\]

Since $\Im F_\be(0)=0$, Lemma \ref{lem:comp} with \eqref{Ldep} gives
\[
\|\dx \Ld^\eps\|_{H^{r-1}}
\les
\| \Im F_\be^\eps - \Im F_\be (0) \|_{H^{r-1}}
\le
C(\|u^\eps\|_{H^{s-1}})\|u^\eps\|_{H^{r}}.
\]
This shows the desired bounds.
\end{proof}

We define
\begin{equation}
E_r^\eps(t)
:=
\sqrt{
\|u^\eps(t)\|_{H^{r-2}}^2
+\|v^\eps(t)\|_{H^{r-2}}^2
+\|W^\eps(t)\|_{H^{r-2}}^2}
\label{Eesp}
\end{equation}
for $r \in \R$.
When $s,r$ satisfy \eqref{condsr0},
Lemmas \ref{lem:gauge} and \ref{lem:gggb} give
\begin{equation}
\begin{aligned}
E_r^\eps(t)
&\le C(\|u^\eps(t)\|_{H^s})\|u^\eps(t)\|_{H^r},
\\
\|u^\eps(t)\|_{H^r}
&\le C(E_s^\eps(t))E_r^\eps(t)
\end{aligned}
\label{Enebb}
\end{equation}
for $\eps\in(0,1)$ and $t\in(0,T_\eps)$.

Since
\[
\dx\Ld^\eps=-\frac12\Im F_\be^\eps,
\]
a direct calculation with \eqref{weps} and \eqref{Wga} gives
\begin{equation}
\dt W^\eps+(i-\eps)\dx^2W^\eps
=(\Re F_\be^\eps+\eps\Im F_\be^\eps)\dx W^\eps
+F_{\cj\be}^\eps\cj{\dx W^\eps}+\fR^\eps,
\label{Weps}
\end{equation}
where
\begin{equation}
\begin{aligned}
\fR^\eps
&:=
\big(
\dt\Ld^\eps
+ (i-\eps) (\dx^2\Ld^\eps)
+ (i-\eps) (\dx\Ld^\eps)^2 \big) W^\eps
\\
&\qquad
- (\Re F_\be^\eps + \eps \Im F_\be^\eps) (\dx\Ld^\eps)W^\eps
- F_{\cj\be}^\eps (\dx\Ld^\eps) \cj{W^\eps}
+ e^{\Ld^\eps} R^\eps.
\end{aligned}
\label{d:fR}
\end{equation}
Here,
$R^\eps$ in \eqref{d:fR}
is given by \eqref{Reps} with $w^\eps = e^{-\Ld^\eps}W^\eps$.

\begin{lemma}
\label{lem:remain}
Let $s,r\in\R$ satisfy \eqref{condsr0}.
Assume that $F$ satisfies \eqref{eq:condition}.
Then, $\fR^\eps$ defined in \eqref{d:fR} satisfies
\begin{equation}
\|\fR^\eps(t)\|_{H^{r-2}}
\le C(E_s^\eps(t))E_r^\eps(t)
\label{Remain}
\end{equation}
for $\eps\in(0,1)$ and $t\in(0,T_\eps)$.
\end{lemma}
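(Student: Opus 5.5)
The plan is to estimate each term of $\fR^\eps$ in \eqref{d:fR} separately in $H^{r-2}$, keeping the bound tame, i.e.\ linear in the top norm. Throughout we use $r-2>\frac12$, so $H^{r-2}$ is an algebra. We also use the equivalences \eqref{Enebb}, which let us replace $\|u^\eps\|_{H^r}$ by $C(E_s^\eps)E_r^\eps$ and $\|u^\eps\|_{H^s}$ by $C(E_s^\eps)$. Since $s-2>\frac12$, the quantities $\|w^\eps\|_{L^\infty}$, $\|v^\eps\|_{W^{1,\infty}}$ and $\|u^\eps\|_{W^{2,\infty}}$ are all bounded by $C(E_s^\eps)$.

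\textbf{The easy terms.}
\begin{itemize}
\item For $e^{\Ld^\eps}R^\eps$, first apply Lemma \ref{lem:gauge} with $\s=s-2$ and $r-2$ in place of $r$. The hypotheses hold because Lemma \ref{lem:gggb} gives $\|\Ld^\eps\|_{L^\infty}+\|\dx\Ld^\eps\|_{H^{s-2}}\le C(\|u^\eps\|_{H^s})$ and $\|\dx\Ld^\eps\|_{H^{r-2}}\le C\|u^\eps\|_{H^{r-1}}$. This reduces matters to bounding $\|R^\eps\|_{H^{r-2}}$. That quantity is a sum of products of coefficients $F_{\cdot}^\eps$ or $F_{\cdot\cdot}^\eps$ with at most two factors drawn from $v^\eps,w^\eps$ and their conjugates. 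Corollary \ref{cor:comppro} with $\s=s-1$ and $r-1$ replaced by $r-2$, together with Lemma \ref{lem:product}(i), bounds it by $C(\|u^\eps\|_{H^s})\|u^\eps\|_{H^r}$.
\item The terms $(\dx^2\Ld^\eps)W^\eps$, $(\dx\Ld^\eps)^2W^\eps$, $(\Re F_\be^\eps+\eps\Im F_\be^\eps)(\dx\Ld^\eps)W^\eps$ and $F_{\cj\be}^\eps(\dx\Ld^\eps)\cj{W^\eps}$ are treated the same way. We have $\dx^2\Ld^\eps=-\frac12\dx\Im F_\be^\eps$, whose $H^{r-2}$ norm is at most $C\|u^\eps\|_{H^r}$ by Lemma \ref{lem:gggb}, and whose $L^\infty$ norm is controlled by $\|w^\eps\|_{L^\infty}$. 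Lemma \ref{lem:product}(i) then gives the bound $C(E_s^\eps)E_r^\eps$.
\end{itemize}

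\textbf{The main obstacle: $(\dt\Ld^\eps)W^\eps$.} Here $\dt\Ld^\eps=-\frac12\int_0^x\dt\Im F_\be^\eps\,dy$ is a priori neither bounded nor in $L^2$, and this is where \eqref{eq:condition} must be used. By \eqref{MI4},
\[
-2\Ld^\eps=\Phi(u^\eps,\cj{u^\eps})(x)-\Phi(u^\eps,\cj{u^\eps})(0)+\int_0^xG^\eps\,dy,
\]
so it suffices to control $\dt\Phi(u^\eps)$ and $\int_0^x\dt G^\eps\,dy$.
\begin{itemize}
\item $\Phi$ is linear, so $\dt\Phi(u^\eps)=\Im(F_{\be\be}(0)\dt u^\eps+F_{\cj\be\be}(0)\cj{\dt u^\eps})$. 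By \eqref{ueps}, $\dt u^\eps=-(i-\eps)w^\eps+F^\eps$, which lies in $H^{r-2}\cap L^\infty$ with the required bounds. The point value at $x=0$ is bounded by the $L^\infty$ norm.
\item For $G^\eps$, differentiate in time: $\dt G^\eps=dG(X^\eps)\cdot(\dt u^\eps,\dt v^\eps,\dots)$. Since $G$ vanishes to second order (it is the remainder $R$ of \eqref{MI6}), we have $|dG(X)|\les|X|$. The delicate factor is $\dt v^\eps=-(i-\eps)\dx w^\eps+\dx F^\eps$, because $\dx w^\eps$ involves three derivatives of $u$ and is not in $L^2$ at the level $s$.
\item To handle it, integrate by parts in $y$. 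For this term write $G_\be(X^\eps)\dx w^\eps=\dx(G_\be(X^\eps)w^\eps)-(\dx G_\be(X^\eps))w^\eps$, and similarly for the conjugate. The boundary contribution $G_\be(X^\eps)w^\eps$ is bounded in $L^\infty$ and lies in $H^{r-2}$ tamely. The remaining integrand is a product of a factor $O(|X^\eps|)\in L^2$ with $L^2\cap L^\infty$ quantities such as $w^\eps$, $\dt u^\eps$ and $\dx F^\eps$. It therefore lies in $L^1$, with norm $\le C(E_s^\eps)$, so its primitive is bounded in $L^\infty$.
\item This yields $\|\dt\Ld^\eps\|_{L^\infty}\le C(E_s^\eps)$. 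For the tame estimate we also need $\|\dx\dt\Ld^\eps\|_{H^{r-3}}\les\|\dt\Im F_\be^\eps\|_{H^{r-3}}\le C(E_s^\eps)E_r^\eps$, which follows from \eqref{ueps}, \eqref{veps} and Lemma \ref{lem:product}(ii). Combining the two bounds through Lemma \ref{lem:prod2}, with $g=\dt\Ld^\eps$ and $f=W^\eps$, gives $\|(\dt\Ld^\eps)W^\eps\|_{H^{r-2}}\le C(E_s^\eps)E_r^\eps$. Lemma \ref{lem:prod2} requires $\dx^{[r-2]+1}\dt\Ld^\eps\in L^2$, and here $[r-2]+1\le r-2$ when $r-2\ge1$. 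In the borderline regime the alternative is to use $\|W^\eps\|_{H^{r-2}}\|\dt\Ld^\eps\|_{L^\infty}+\|W^\eps\|_{L^\infty}\|\dt\Ld^\eps\|_{\dot H^{r-2}}$ from a Kato--Ponce type bound (Lemma \ref{lem:product}(i) in homogeneous form), which only needs $\dx\dt\Ld^\eps\in H^{r-3}$.
\end{itemize}

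The justification of the time differentiation itself is harmless, since $u^\eps\in C((0,T_\eps];H^\infty)$ by Proposition \ref{prop:exRNLS}.
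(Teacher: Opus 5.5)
\textbf{Verdict: there is a genuine gap, in the final step for $(\dt\Ld^\eps)W^\eps$.}

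Your treatment of the easy terms is fine, and so is your decomposition of $\dt\Ld^\eps$ via \eqref{MI4} with integration by parts of $G_\be^\eps\dx w^\eps$; it is essentially the paper's. The problem comes when you merge all the pieces back into the single function $g=\dt\Ld^\eps$. You then try to bound $gW^\eps$ in $H^{r-2}$ using only $\|g\|_{L^\infty}$ and $\|\dx g\|_{H^{r-3}}$, and neither tool you cite does this.

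\emph{Lemma \ref{lem:prod2} does not apply.} At regularity $r-2$ it requires $\dx^{[r-2]+1}g=-\frac12\dx^{[r]-2}\dt\Im F_\be^\eps\in L^2$. But $\dt\Im F_\be^\eps$ contains $\Im((-i+\eps)F_{\be\be}^\eps\dx w^\eps)$, and $\dx w^\eps=\dx^3u^\eps$ lies only in $H^{r-3}$. Since $[r]-2>r-3$ for every $r$, this needs more than $H^r$ regularity; for $r=4$ it would require $u^\eps\in H^5$. Your claim $[r-2]+1\le r-2$ is never true.

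\emph{The homogeneous Kato--Ponce fallback does not work where you use it.} It needs $\|g\|_{\dot H^{r-2}}=\frac12\|\,|\xi|^{r-3}\Ft[\dt\Im F_\be^\eps]\|_{L^2}$. For $r\ge3$ this is controlled by $\|\dx g\|_{H^{r-3}}$. For $\frac52<r<3$, exactly the regime where you invoke it, the weight $|\xi|^{r-3}$ is singular at $\xi=0$. There $\|g\|_{L^\infty}$ and $\|\dx g\|_{H^{r-3}}$ together do not control the low-frequency part: a bounded $g$ with $\dx g\in L^2$ need not lie in $\dot H^{\s}$ for $\s\in(\frac12,1)$. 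The inhomogeneous Lemma \ref{lem:product}(i) is also unavailable, since $g\notin L^2$ in general. This regime cannot be avoided, because Corollary \ref{cor:enebdur} uses the lemma with $r=s$, and $s$ may be arbitrarily close to $\frac52$.

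\textbf{The fix is to keep the structure you already found instead of recombining it.}
\begin{itemize}
\item Put into $\Nl_1^\eps$ the exact primitives: $\dt\Phi(u^\eps,\cj{u^\eps})$ and the boundary terms $(-i+\eps)G_\be^\eps w^\eps+(i+\eps)G_{\cj\be}^\eps\cj{w^\eps}$.
\item Put into $\Nl_2^\eps$ the values at $x=0$ together with $\int_0^x R_3^\eps\,dy$, where $R_3^\eps$ is your $L^1$ integrand.
\end{itemize}
Then $-2\dt\Ld^\eps=\Nl_1^\eps+\Nl_2^\eps$ as in \eqref{intGep}, up to harmless bookkeeping of the nonlinear part of $F^\eps$. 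Since $\|\Nl_1^\eps\|_{H^{r-2}}\le C(E_s^\eps)E_r^\eps$, the product $\Nl_1^\eps W^\eps$ is handled by Lemma \ref{lem:product}. The function $\Nl_2^\eps$ is bounded, and $\dx\Nl_2^\eps=R_3^\eps\in H^{r-2}$, one full derivative better than $\dx\dt\Ld^\eps$. Hence $\|\dx^{[r]-1}\Nl_2^\eps\|_{L^2}\le\|R_3^\eps\|_{H^{r-2}}\le C(E_s^\eps)E_r^\eps$, and Lemma \ref{lem:prod2} applies to $\Nl_2^\eps W^\eps$. This is the paper's argument.

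\textbf{An alternative that keeps your single $g$.} Your two bounds on $g$ do suffice via a sharper paraproduct estimate: $\|fg\|_{H^\s}\les\|g\|_{L^\infty}\|f\|_{H^\s}+\|f\|_{H^{\tau}}\|\dx g\|_{H^{\s-1}}$ for $\s>0$ and $\tau>\frac12$. You would obtain it by rerunning the proof of Lemma \ref{lem:prod2} with $\|P_Ng\|_{L^2}\sim N^{-1}\|P_N\dx g\|_{L^2}$ for $N>1$. However, this is a new lemma you would have to prove; it is not among the results you cite.

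\textbf{A minor slip.} For $R^\eps$, Corollary \ref{cor:comppro} must be used with $\s=s-2$, not $s-1$, since $w^\eps$ is only in $H^{s-2}$.
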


\begin{proof}
For simplicity, we suppress the time dependence.
The only delicate term in \eqref{d:fR} is $(\dt\Ld^\eps)W^\eps$.
Indeed,
Corollary \ref{cor:comppro}, Lemmas \ref{lem:product} and \ref{lem:gggb}, and \eqref{Enebb} give
\begin{align}
\begin{split}
&\|(\dx^2\Ld^\eps)W^\eps\|_{H^{r-2}}
+
\|(\dx\Ld^\eps)^2W^\eps\|_{H^{r-2}}
\\
&\quad
+
\|F_\be^\eps(\dx\Ld^\eps)W^\eps\|_{H^{r-2}}
+
\|F_{\cj\be}^\eps\cj{(\dx\Ld^\eps)W^\eps}\|_{H^{r-2}}
\\
&
\le C(E_s^\eps)E_r^\eps.
\end{split}
\label{fR1}
\end{align}
Moreover,
Corollary \ref{cor:comppro},
Lemmas \ref{lem:gauge} and \ref{lem:gggb},
and \eqref{Enebb} yield that
\begin{equation}
\|e^{\Ld^\eps}R^\eps\|_{H^{r-2}}
\le C(E_s^\eps)E_r^\eps.
\label{fR6}
\end{equation}

We estimate $\dt\Ld^\eps$.
Using \eqref{ueps} and \eqref{veps},
we write
\begin{equation}
\dt u^\eps=R_1^\eps,
\quad
\dt v^\eps=(-i+\eps)\dx^2v^\eps+R_2^\eps,
\label{eq:uve}
\end{equation}
where
\begin{align}
\begin{split}
R_1^\eps&:=(-i+\eps)\dx v^\eps+F^\eps,
\\
R_2^\eps&:=
F_\al^\eps v^\eps+F_\be^\eps\dx v^\eps
+F_{\cj\al}^\eps\cj{v^\eps}+F_{\cj\be}^\eps\cj{\dx v^\eps}.
\end{split}
\label{R2}
\end{align}
Since $F(0)=0$,
Lemma \ref{lem:comp} with \eqref{Enebb} yields that
\begin{equation}
\| R_1^\eps \|_{H^{r-2}} + \| R_2^\eps \|_{H^{r-2}}
\le
C(E_s^\eps) E_r^\eps.
\label{N2L1a}
\end{equation}

Set
\begin{align}
Q^\eps
&:=
F_\al(0)u^\eps+F_\be(0)v^\eps
+F_{\cj\al}(0)\cj{u^\eps}+F_{\cj\be}(0)\cj{v^\eps},
\label{R2q}
\\
\begin{split}
\wt R_2^\eps
&:= R_2^\eps -\dx Q^\eps
=
(F_\al^\eps - F_\al(0))v^\eps
+ ( F_\be^\eps -F_\be(0)) \dx v^\eps
\\
&\hspace*{90pt}
+ (F_{\cj\al}^\eps - F_{\cj\al}(0)) \cj{v^\eps}
+ (F_{\cj\be}^\eps - F_{\cj\be}(0)) \cj{\dx v^\eps}.
\end{split}
\label{R2q2}
\end{align}
It follows from \eqref{Enebb} that
\begin{equation}
\|Q^\eps\|_{L^\infty}
+\|\wt R_2^\eps\|_{L^1}
\le C(E_s^\eps).
\label{N2L1}
\end{equation}
A direct calculation with \eqref{Gep} and \eqref{eq:uve}--\eqref{R2q2} gives
\begin{equation}
\begin{aligned}
\dt\Im F_\be^\eps
&=
\Im\big(
(-i+\eps)F_{\be\be}(0)\dx^2v^\eps
+(i+\eps)F_{\cj\be\be}(0)\cj{\dx^2v^\eps}
\big)
\\
&\quad
+ \dx \Im\big(F_{\be\be}(0) Q^\eps + F_{\cj\be\be}(0) \cj{Q^\eps}\big)
\\
&\quad
+ (-i+\eps)\dx(G_\be^\eps\dx v^\eps)
+ (i+\eps)\dx(G_{\cj\be}^\eps\cj{\dx v^\eps})
+ R_3^\eps,
\end{aligned}
\label{Tabbep}
\end{equation}
where
\begin{equation}
\begin{split}
R_3^\eps
&:=
\Im\big(F_{\be\be}(0) \wt R_2^\eps+F_{\cj\be\be}(0)\cj{\wt R_2^\eps}\big)
\\
&\quad
+
G_{\al}^\eps R_1^\eps + G_{\cj \al}^\eps \cj{R_1^\eps}
+
G_{\be}^\eps R_2^\eps + G_{\cj \be}^\eps \cj{R_2^\eps}
\\
&\quad
- (-i+\eps) (\dx G_\be^\eps) \dx v^\eps
- (i+\eps) (\dx G_{\cj\be}^\eps) \cj{\dx v^\eps}.
\end{split}
\label{R3}
\end{equation}
Combining \eqref{N2L1}, \eqref{Gep}, and \eqref{R3g},
we obtain
\begin{equation}
\| R_3^\eps \|_{L^1}
\le C(E_s^\eps).
\label{N2L5}
\end{equation}

Integrating \eqref{Tabbep} from $0$ to $x$ and using \eqref{Ldep}, we obtain
\begin{equation}
-2\dt\Ld^\eps=\Nl_1^\eps+\Nl_2^\eps,
\label{intGep}
\end{equation}
where
\begin{align*}
\Nl_1^\eps
&:=
\Im\big(
(-i+\eps)F_{\be\be}(0)\dx v^\eps
+(i+\eps)F_{\cj\be\be}(0)\cj{\dx v^\eps}
\big)
\\
&\quad
+ \Im\big(F_{\be\be}(0) Q^\eps + F_{\cj\be\be}(0) \cj{Q^\eps}\big)
\\
&\quad
+(-i+\eps)G_\be^\eps\dx v^\eps
+(i+\eps)G_{\cj\be}^\eps\cj{\dx v^\eps},
\\
\Nl_2^\eps(t,x)
&:=-\Nl_1^\eps(t,0)+\int_0^xR_3^\eps(t,y)dy.
\end{align*}
Corollary \ref{cor:comppro}, \eqref{R2q}, and  \eqref{Enebb} give
\[
\| \Nl_1^\eps \|_{H^{r-2}}
\le C(E_s^\eps) E_r^\eps.
\]
From Lemma \ref{lem:product},
we have
\begin{equation}
\begin{aligned}
\| \Nl_1^\eps W^\eps\|_{H^{r-2}}
&\les
\| \Nl_1^\eps \|_{H^{s-2}} \| W^\eps\|_{H^{r-2}}
+ \| \Nl_1^\eps \|_{H^{r-2}} \| W^\eps\|_{H^{s-2}}
\\
&\le
C(E_s^\eps) E_r^\eps.
\end{aligned}
\label{Nl1ep5}
\end{equation}

Corollary \ref{cor:comppro}, \eqref{R2q}, \eqref{Enebb}, and \eqref{N2L5} give
\[
\| \Nl_2^\eps \|_{L^\infty}
\le C(E_s^\eps).
\]
Moreover,
it follows from
$\dx\Nl_2^\eps=R_3^\eps$,
Lemma \ref{lem:comp}, \eqref{R3}, \eqref{Enebb}, and \eqref{N2L1a} that
\[
\| \dx^{[r]-1} \Nl_2^\eps \|_{L^2}
=
\| \dx^{[r]-2} R_3^\eps \|_{L^2}
\le
\| R_3^\eps \|_{H^{r-2}}
\le C(E_s^\eps)E_r^\eps.
\]
Hence,
Lemma \ref{lem:prod2} implies that
\begin{equation}
\begin{aligned}
\| \Nl_2^\eps W^\eps\|_{H^{r-2}}
&\les
\| W^\eps \|_{H^{r-2}} \| \Nl_2^\eps \|_{L^\infty}
+
\| W^\eps \|_{H^{s-2}} \| \dx^{[r]-1} \Nl_2^\eps \|_{L^2}
\\
&\le
C(E_s^\eps) E_r^\eps.
\end{aligned}
\label{Nl1ep6}
\end{equation}

From \eqref{intGep}--\eqref{Nl1ep6},
we have
\[
\| (\dt \Ld^\eps) W^\eps\|_{H^{r-2}}
\le
C(E_s^\eps) E_r^\eps.
\]
Together with \eqref{fR1} and \eqref{fR6}, this proves \eqref{Remain}.
\end{proof}

We now record the following estimates for the first-order terms.

\begin{lemma}
\label{lem:ened}
Let $s,r\in\R$ satisfy \eqref{condsr0}.

\begin{enumerate}
\item
Assume that $A : \R\to\R$ satisfies
\[
A-A_0 \in H^{r-1}(\R)
\]
for some $A_0 \in \R$.
Then,
we have
\begin{align*}
&| \Re (A\dx f,f)_{H^{r-2}} |
\\
&\les
\|A-A_0\|_{H^{s-1}} \|f\|_{H^{r-2}}^2
+
\|A-A_0\|_{H^{r-1}}
\|f\|_{H^{s-2}}\|f\|_{H^{r-2}}
\end{align*}
for $f\in H^{r-1}(\R)$.

\item
Assume that $B : \R\to\C$ satisfies
\[
B-B_0\in H^{r-1}(\R)
\]
for some $B_0\in\C$.
Then,
we have
\begin{align*}
&| (B\cj{\dx f},f)_{H^{r-2}} |
\\
&\les
\|B-B_0\|_{H^{s-1}}
\|f\|_{H^{r-2}}^2
+
\|B-B_0\|_{H^{r-1}}
\|f\|_{H^{s-2}}\|f\|_{H^{r-2}}
\end{align*}
for $f\in H^{r-1}(\R)$.
\end{enumerate}
\end{lemma}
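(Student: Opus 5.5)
We first reduce to the case $A_0=0$, $B_0=0$ for the variable parts, treating the constant parts separately. For (i), write $A=A_0+a$ with $a:=A-A_0\in H^{r-1}(\R)$ real-valued. The constant part gives
\[
\Re(A_0\dx f,f)_{H^{r-2}}=A_0\Re(\dx J^{r-2}f,J^{r-2}f)_{L^2}=0,
\]
since $A_0$ is real and $\dx$ is skew-adjoint. For the variable part we commute $J^{r-2}$ past $a$:
\[
(a\dx f,f)_{H^{r-2}}=([J^{r-2},a]\dx f,J^{r-2}f)_{L^2}+(a\dx J^{r-2}f,J^{r-2}f)_{L^2}.
\]
Because $a$ is real, integration by parts gives $\Re(a\dx g,g)_{L^2}=-\frac12(\dx a\, g,g)_{L^2}$ with $g=J^{r-2}f$. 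This term is bounded by $\|\dx a\|_{L^\infty}\|f\|_{H^{r-2}}^2\les\|a\|_{H^{s-1}}\|f\|_{H^{r-2}}^2$, using $s-2>\frac12$. For the commutator we apply Lemma~\ref{lem:Com1} with exponent $r-2\ge 0$ and $\dl:=s-\frac52>0$:
\[
\|[J^{r-2},a]\dx f\|_{L^2}\les\|a\|_{H^{s-1}}\|f\|_{H^{r-2}}+\|a\|_{H^{r-1}}\|f\|_{H^{s-2}}.
\]
Cauchy--Schwarz then finishes (i). For the integration by parts we use that $f\in H^{r-1}$, or argue by density.

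For (ii), write $B=B_0+b$. The constant part is
\[
(B_0\cj{\dx f},f)_{H^{r-2}}=B_0\int\cj{\dx g}\,\cj g\,dx=\frac{B_0}2\int\dx(\cj g^2)\,dx=0,
\]
where $g=J^{r-2}f$ and we use that $J^{r-2}$ commutes with conjugation, its symbol being real and even. For the variable part,
\[
(b\cj{\dx f},f)_{H^{r-2}}=([J^{r-2},b]\cj{\dx f},g)_{L^2}+\int b\,\cj{\dx g}\,\cj g\,dx.
\]
The commutator is handled by Lemma~\ref{lem:Com1} applied with $\cj f$ in place of $g$, exactly as in (i). The last integral equals $\frac12\int b\,\dx(\cj g^2)\,dx=-\frac12\int\dx b\,\cj g^2\,dx$. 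This is the key point: no real-valuedness of $b$ is needed, because $\cj{\dx g}\,\cj g$ is an exact derivative. The integral is bounded by $\|\dx b\|_{L^\infty}\|f\|_{H^{r-2}}^2$.

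The only mild subtlety, and the main thing to check, is the applicability of Lemma~\ref{lem:Com1}. It needs $a\in H^{\max(r-1,\frac32+\dl)}$ and $f\in H^{\max(r-2,\frac12+\dl)}$, which holds with $\dl=s-\frac52$ since $s\le r$. It also needs the regularity used to justify the integrations by parts; this is fine since $g\in H^1$ when $f\in H^{r-1}$.
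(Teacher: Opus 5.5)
Your proof is correct and follows the paper's route. You subtract the constant, bound the commutator $[J^{r-2},A-A_0]\dx f$ (resp.\ with $\cj f$ in place of $f$) by Lemma~\ref{lem:Com1} with $\dl=s-\frac52$, and handle the remaining term by integration by parts, using real-valuedness in (i) and the exact-derivative identity $\cj{\dx g}\,\cj g=\frac12\dx(\cj g^{2})$ in (ii); your explicit treatment of $A_0$, $B_0$ and of $\dl$ only spells out what the paper leaves implicit.
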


\begin{proof}
If $g \in W^{1,\infty}(\R)$ is real-valued,
integration by parts yields that
\[
\Re(g J^{r-2}\dx f, J^{r-2} f)_{L^2}
= - \frac 12 \int_\R (\dx g) |J^{r-2} f|^2 dx.
\]
We have
\begin{align*}
&\Re (A\dx f,f)_{H^{r-2}}
\\
&=
\Re ( (A-A_0) \dx f,f)_{H^{r-2}}
\\
&=
\Re ( [J^{r-2}, A-A_0] \dx f, J^{r-2}f)_{L^2}
- \frac 12 \int_\R (\dx (A-A_0)) |J^{r-2} f|^2 dx.
\end{align*}
It follows from Lemma \ref{lem:Com1} and $s-1>\frac 32$ that
\begin{align*}
\|[J^{r-2},A-A_0]\dx f\|_{L^2}
&\les
\| A-A_0 \|_{H^{s-1}} \| f \|_{H^{r-2}}
+ \| A-A_0 \|_{H^{r-1}} \| f \|_{H^{s-2}}
.
\end{align*}
Hence, we obtain
\[
|\Re (A\dx f,f)_{H^{r-2}}|
\les
\|A-A_0\|_{H^{s-1}} \| f\|_{H^{r-2}}^2
+
\| A-A_0 \|_{H^{r-1}} \| f \|_{H^{s-2}} \| f \|_{H^{r-2}}.
\]
This proves (i).

If $h \in W^{1,\infty}(\R)$,
integration by parts yields that
\[
(h J^{r-2} \cj{\dx f}, J^{r-2} f)_{L^2}
= - \frac 12 \int_\R (\dx h) (J^{r-2} \cj f)^2 dx.
\]
Hence, the same calculation as above yields that
\[
|(B \cj{\dx f},f)_{H^{r-2}}|
\les
\| B-B_0 \|_{H^{s-1}} \| f\|_{H^{r-2}}^2
+
\| B-B_0 \|_{H^{r-1}} \| f \|_{H^{s-2}} \| f \|_{H^{r-2}}.
\]
This proves (ii).
\end{proof}

\begin{proposition}
\label{prop:dtEne}
Let $s,r\in\R$ satisfy \eqref{condsr0}.
Assume that $F$ satisfies \eqref{eq:condition}.
Then,
there exists a non-decreasing function
$C_{0,r}:[0,\infty)\to(0,\infty)$ such that
\[
\dt E_r^\eps(t)
\le C_{0,r}(E_s^\eps(t))E_r^\eps(t)
\]
for $\eps\in(0,1)$ and $t\in(0,T_\eps)$.
\end{proposition}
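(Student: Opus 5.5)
We differentiate $(E_r^\eps)^2$ in time and estimate the three pieces $\|u^\eps\|_{H^{r-2}}^2$, $\|v^\eps\|_{H^{r-2}}^2$, $\|W^\eps\|_{H^{r-2}}^2$ separately. For $t\in(0,T_\eps)$, Proposition \ref{prop:exRNLS} gives $u^\eps(t)\in H^\infty(\R)$. Hence all the pairings below are classical and the time derivatives are justified; in particular $W^\eps\in H^{r-1}(\R)$, by Lemmas \ref{lem:gauge} and \ref{lem:gggb}. In each case the dissipative term contributes $-\eps\|\dx f\|_{H^{r-2}}^2\le0$, and the dispersive term $i\dx^2$ is skew-adjoint, so both can be dropped. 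It then suffices to prove
\[
\tfrac{d}{dt}(E_r^\eps)^2\le C(E_s^\eps)(E_r^\eps)^2,
\]
and to divide by $2E_r^\eps$. Where $E_r^\eps$ vanishes, one can either work with $\sqrt{(E_r^\eps)^2+\delta}$ or note that the solution is then trivial.

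For $u^\eps$, equation \eqref{ueps} gives
\[
\tfrac{d}{dt}\|u^\eps\|_{H^{r-2}}^2\le 2|(F^\eps,u^\eps)_{H^{r-2}}|.
\]
Since $F(0)=0$, Lemma \ref{lem:comp} and \eqref{Enebb} give $\|F^\eps\|_{H^{r-2}}\le C(E_s^\eps)\|u^\eps\|_{H^{r-1}}\le C(E_s^\eps)E_r^\eps$, because $\|u^\eps\|_{H^{r-1}}\le\|u^\eps\|_{H^{r-2}}+\|v^\eps\|_{H^{r-2}}$. For $v^\eps$, equation \eqref{veps} has the right-hand side $R_2^\eps$ of \eqref{R2}, with $\dx v^\eps=w^\eps$. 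Its only terms carrying two derivatives of $u$ are $F_\be^\eps w^\eps$ and $F_{\cj\be}^\eps\cj{w^\eps}$, and these are bounded in $H^{r-2}$ by Corollary \ref{cor:comppro} using $\|w^\eps\|_{H^{r-2}}\le\|u^\eps\|_{H^r}\le C(E_s^\eps)E_r^\eps$. Thus \eqref{N2L1a} already gives $\|R_2^\eps\|_{H^{r-2}}\le C(E_s^\eps)E_r^\eps$, and the $v^\eps$-part is closed by Cauchy--Schwarz.

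The main part is $W^\eps$, which satisfies \eqref{Weps}. We obtain
\[
\tfrac{d}{dt}\|W^\eps\|_{H^{r-2}}^2\le 2\Re\big(A^\eps\dx W^\eps,W^\eps\big)_{H^{r-2}}+2\big|\big(F_{\cj\be}^\eps\cj{\dx W^\eps},W^\eps\big)_{H^{r-2}}\big|+2\|\fR^\eps\|_{H^{r-2}}\|W^\eps\|_{H^{r-2}},
\]
with the real-valued coefficient $A^\eps:=\Re F_\be^\eps+\eps\Im F_\be^\eps$. Lemma \ref{lem:remain} bounds the last term by $C(E_s^\eps)(E_r^\eps)^2$. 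For the first term we apply Lemma \ref{lem:ened} (i) with $A_0:=\Re F_\be(0)+\eps\Im F_\be(0)$. Lemma \ref{lem:comp} then gives
\[
\|A^\eps-A_0\|_{H^{r-1}}\le C(\|u^\eps\|_{H^{s}})\|u^\eps\|_{H^r}\le C(E_s^\eps)E_r^\eps,
\]
and similarly $\|A^\eps-A_0\|_{H^{s-1}}\le C(E_s^\eps)$, uniformly in $\eps\in(0,1)$. We also use $\|W^\eps\|_{H^{s-2}}\le E_s^\eps$ and $\|W^\eps\|_{H^{r-2}}\le E_r^\eps$. The second term is handled in the same way by Lemma \ref{lem:ened} (ii), with $B=F_{\cj\be}^\eps$ and $B_0=F_{\cj\be}(0)$. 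Summing the three pieces gives the claim, with $C_{0,r}$ chosen non-decreasing by taking suprema of the constants.

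The step we expect to be delicate is checking that every constant depends only on $E_s^\eps$ and not on $\|u^\eps\|_{H^s}$ directly. This follows from the second line of \eqref{Enebb} with $r=s$, namely $\|u^\eps\|_{H^s}\le C(E_s^\eps)E_s^\eps$. The genuinely hard estimate, the control of $(\dt\Ld^\eps)W^\eps$, is already contained in Lemma \ref{lem:remain}. What remains is the real-valuedness of $A^\eps$: it is exactly what the gauge transformation provides, and it is what permits the integration by parts in Lemma \ref{lem:ened} (i).
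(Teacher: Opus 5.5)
Your proposal is correct and follows the paper's proof essentially step for step. The $u^\eps$ and $v^\eps$ parts go through \eqref{N2L1a}, and the $W^\eps$ part goes through \eqref{Weps}, Lemma \ref{lem:ened}, and Lemma \ref{lem:remain}; the paper takes $A_0=F_\be(0)$, which coincides with your choice because $\Im F_\be(0)=0$. The only difference is cosmetic: for $u^\eps$ the paper bounds $\dt u^\eps=R_1^\eps=(-i+\eps)\dx v^\eps+F^\eps$ directly in $H^{r-2}$, rather than discarding the linear part by skew-adjointness and dissipation, and both work equally well.
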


\begin{proof}
From \eqref{eq:uve} and \eqref{N2L1a},
we have
\begin{equation}
\dt\big(
\|u^\eps\|_{H^{r-2}}^2+\|v^\eps\|_{H^{r-2}}^2
\big)
\le
C(E_s^\eps)E_r^\eps
\big(
\|u^\eps\|_{H^{r-2}}+\|v^\eps\|_{H^{r-2}}
\big).
\label{est:u1}
\end{equation}
For $W^\eps$,
\eqref{Weps} gives
\begin{align*}
\frac 12 \dt (\|W^\eps\|_{H^{r-2}}^2)
&=
-\eps \| \dx W^\eps \|_{H^{r-2}}^2
+ \Re ((\Re F_\be^\eps+\eps\Im F_\be^\eps)\dx W^\eps, W^\eps)_{H^{r-2}}
\\
&\quad
+ \Re (F_{\cj\be}^\eps\cj{\dx W^\eps}, W^\eps)_{H^{r-2}}
+ \Re (\fR^\eps, W^\eps)_{H^{r-2}}.
\end{align*}
For the second and third terms,
we apply Lemma \ref{lem:ened} with
\[
A=\Re F_\be^\eps+\eps\Im F_\be^\eps,
\quad
B=F_{\cj\be}^\eps,
\quad
A_0=F_\be(0),
\quad
B_0=F_{\cj\be}(0).
\]
For the fourth term,
we use Lemma \ref{lem:remain}.
Thus,
we obtain
\begin{equation}
\frac 12 \dt (\|W^\eps\|_{H^{r-2}}^2)
\le
-\eps\|\dx W^\eps\|_{H^{r-2}}^2
+ C(E_s^\eps) E_r^\eps \|W^\eps\|_{H^{r-2}}.
\label{enWe}
\end{equation}
Combining \eqref{Eesp}, \eqref{est:u1}, and \eqref{enWe} proves the assertion.
\end{proof}

We next establish a uniform lower bound on the lifespan and a uniform persistence estimate.

\begin{corollary}
\label{cor:enebdur}
Let $s,r\in\R$ satisfy \eqref{condsr0} and let $M>0$.
Assume that $F$ satisfies \eqref{eq:condition}.
Then, there exist $T=T(M)>0$ and $C_{1,r}(M)>0$ such that,
for every $\phi\in H^r(\R)$ satisfying $\|\phi\|_{H^s}\le M$,
the solution $u^\eps$ to \eqref{RNLS} exists on $[0,T]$ and satisfies
\[
\|u^\eps\|_{L_T^\infty H^r}
\le C_{1,r}(M)\|\phi\|_{H^r}
\]
for $\eps\in(0,1)$.
\end{corollary}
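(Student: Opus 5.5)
We will argue by a continuity (bootstrap) argument based on Proposition \ref{prop:dtEne}, first at regularity $s$ and then at regularity $r$.

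\emph{Step 1: a uniform lifespan at level $s$.} Fix $\phi\in H^r(\R)$ with $\|\phi\|_{H^s}\le M$. By \eqref{Enebb} at $t=0$ (where $u^\eps(0)=\phi$), we have
\[
E_s^\eps(0)\le C(M)M=:K_0(M).
\]
Apply Proposition \ref{prop:dtEne} with $r=s$. As long as $E_s^\eps(t)\le 2K_0$, we get
\[
\dt E_s^\eps\le C_{0,s}(2K_0)E_s^\eps,
\]
and hence $E_s^\eps(t)\le e^{C_{0,s}(2K_0)t}K_0$. Choose $T=T(M)$ so that $e^{C_{0,s}(2K_0)T}\le \frac32$. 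Since $t\mapsto E_s^\eps(t)$ is continuous on $[0,T_\eps)$, the set of times where $E_s^\eps\le 2K_0$ is relatively closed. The bound $E_s^\eps\le \frac32K_0$ then shows it is also open, so $E_s^\eps(t)\le \frac32K_0$ on $[0,\min(T,T_\eps))$. Continuity of $E_s^\eps$ follows from $u^\eps\in C([0,T_\eps];H^s)$ together with the continuity of the gauge $e^{\Ld^\eps}$, which we get from Lemmas \ref{lem:gauge} and \ref{lem:gggb}. By \eqref{Enebb}, this gives $\|u^\eps(t)\|_{H^s}\le C(M)$ on that interval.

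\emph{Step 2: the lifespan extends.} We must show $T_\eps\ge T$. This is the main obstacle, because Proposition \ref{prop:exRNLS} gives a lifespan that depends on $\eps$ and on the data. The standard blow-up alternative for the parabolic problem settles it. Iterating the contraction argument of Proposition \ref{prop:exRNLS}, the local existence time depends only on $\eps$ and an upper bound for the $H^s$ norm of the data. So if the maximal time $T_\eps^\ast$ were smaller than $T$, we could restart the problem from $t_0$ close to $T_\eps^\ast$, using $\|u^\eps(t_0)\|_{H^s}\le C(M)$, and extend the solution past $T_\eps^\ast$. This is a contradiction. Hence $u^\eps$ exists on $[0,T]$ with the $H^s$ bound. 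Note that the existence time of this restart may depend on $\eps$; that is harmless, since only the a priori bound must be uniform in $\eps$.

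\emph{Step 3: persistence at level $r$.} By Proposition \ref{prop:exRNLS}, $u^\eps\in C([0,T];H^r)$. Apply Proposition \ref{prop:dtEne} at level $r$ with the bound $E_s^\eps\le\frac32K_0$ from Step 1. This gives
\[
\dt E_r^\eps\le C_{0,r}\big(\tfrac32K_0\big)E_r^\eps.
\]
Gronwall's inequality then yields $E_r^\eps(t)\le e^{C_{0,r}(\frac32K_0)T}E_r^\eps(0)$. Converting both sides with \eqref{Enebb}, we obtain
\[
\|u^\eps(t)\|_{H^r}\le C(\tfrac32K_0)\,E_r^\eps(t)\le C(M)\,E_r^\eps(0)\le C(M)\,\|\phi\|_{H^r}.
\]
This holds for all $t\in[0,T]$, which is the claimed bound with a constant $C_{1,r}(M)$.

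\emph{A technical point.} Proposition \ref{prop:dtEne} is stated for $t\in(0,T_\eps)$, and differentiating the $H^{r-2}$ norm of $W^\eps$ there is legitimate because of the $H^\infty$ smoothing for $t>0$. We should therefore integrate the differential inequality on $[\delta,t]$ and let $\delta\downarrow0$, using the continuity of $E_r^\eps$ at $t=0$.
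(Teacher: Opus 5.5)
Your proposal is correct and follows essentially the same route as the paper: a bootstrap at level $s$ via Proposition~\ref{prop:dtEne} gives an $\eps$-independent lifespan $T(M)$, the continuation criterion for \eqref{RNLS} rules out early breakdown, and Gronwall at level $r$ is converted back to $H^r$ through \eqref{Enebb}. One small fix: in Step 2 you should rule out $T_\eps^\ast\le T$, not just $T_\eps^\ast<T$, to get existence on the closed interval $[0,T]$, and your restart argument does this without change, since the Step 1 bound holds on $[0,T_\eps^\ast)$ whenever $T_\eps^\ast\le T$.
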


\begin{proof}
Let $T_\eps^{\max}$ be the maximal existence time and set
\[
K:=E_s^\eps(0).
\]
Since $u^\eps(0)=\phi$, $K$ is independent of $\eps$, and \eqref{Enebb} gives
\[
K\le C(M)=:K'.
\]
Set
\[
T=T(M):=\frac{\log2}{C_{0,s}(2K')}.
\]
Proposition \ref{prop:dtEne} and the usual bootstrap argument yield
\[
E_s^\eps(t)\le 2K
\]
for $0\le t<\min(T,T_\eps^{\max})$.
If $T_\eps^{\max}\le T$, \eqref{Enebb} gives a uniform $H^s$ bound up to $T_\eps^{\max}$,
contradicting the continuation criterion for \eqref{RNLS}.
Hence, $T_\eps^{\max}>T$.
Applying Proposition \ref{prop:dtEne} at regularity $r$ and using \eqref{Enebb}, we obtain
\[
\|u^\eps\|_{L_T^\infty H^r}
\le C_{1,r}(M)\|\phi\|_{H^r}.
\]
This concludes the proof.
\end{proof}

By Corollary \ref{cor:enebdur} and the Banach-Alaoglu theorem,
there exists a sequence $\{ \eps_j \}_{j \in \N} \subset (0,1)$ such that
$\eps_j \downarrow 0$ and
$\{ u^{\eps_j} \}_{j \in \N}$ converges
to some $u$
in the weak-$\ast$ topology of $L^\infty([0,T];H^s(\R))$.
Moreover, \eqref{ueps} and Lemma \ref{lem:comp} give
\[
\sup_{j \in \N} \|\dt u^{\eps_j}\|_{L_T^\infty H^{s-2}}<\infty.
\]
By Lemma \ref{lem:cpt}, after taking a subsequence,
we have
\[
u^{\eps_j}\to u,
\quad
\dx u^{\eps_j}\to\dx u
\]
locally uniformly on $[0,T]\times\R$.
Consequently,
\[
F(u^{\eps_j},\dx u^{\eps_j},
\cj{u^{\eps_j}},\cj{\dx u^{\eps_j}})
\to
F(u,\dx u,\cj u,\cj{\dx u})
\]
locally uniformly.
Moreover,
\[
\eps_j\dx^2u^{\eps_j}\to0
\]
in the sense of distributions.
Since
\[
u^{\eps_j}(0)=\phi,
\]
we also have $u(0)=\phi$.
Passing to the limit in \eqref{RNLS}, we conclude that
$u$ is a solution to \eqref{NLS} in the sense of
Definition \ref{def:sol}.

If $\phi\in H^r(\R)$ for some $r\ge s$, Corollary \ref{cor:enebdur} also gives
\[
\sup_{j \in \N}
\|u^{\eps_j}\|_{L_T^\infty H^r}
\le
C_{1,r}(M)\|\phi\|_{H^r}.
\]
Passing to a further subsequence,
we may assume that
$\{ u^{\eps_j} \}_{j \in \N}$ converges
in the weak-$\ast$ topology of $L^\infty([0,T];H^r(\R))$.
The weak-$\ast$ limit agrees with $u$ by uniqueness of distributional limits.
Hence, by weak lower semicontinuity,
we have
\begin{equation}
\|u\|_{L_T^\infty H^r}
\le
C_{1,r}(M)\|\phi\|_{H^r}.
\label{persbound}
\end{equation}

We next prove uniqueness and the lower-order difference estimate.

\begin{proposition}
\label{prop:diffHs-1s}
Let $s>\frac52$ and assume that $F$ satisfies \eqref{eq:condition}.
Let $u^{(j)}\in C_w ([0,T];H^s(\R))$ be solutions to \eqref{NLS}
with initial data $\phi^{(j)}\in H^s(\R)$ for $j=1,2$.
Set
\[
M:= \|u^{(1)}\|_{L_T^\infty H^s}+\|u^{(2)}\|_{L_T^\infty H^s}.
\]
Then,
\[
\|u^{(1)}-u^{(2)}\|_{L_T^\infty H^{1}}
\le
C_1 (T,M)
\|\phi^{(1)}-\phi^{(2)}\|_{H^{1}}.
\]
In particular, the solution to \eqref{NLS} is unique.
\end{proposition}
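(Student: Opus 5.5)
We plan to prove the $H^1$ difference estimate by an energy argument on the difference, using the same gauge idea as in the existence part but at one derivative less. Set $z:=u^{(1)}-u^{(2)}$ and $y:=\dx z$. Since both solutions lie in $L^\infty_T H^s$ with $s>\frac52$, the coefficients $F_\be[u^{(j)}]$, $F_{\cj\be}[u^{(j)}]$ are in $L^\infty_TW^{1,\infty}$. Differentiating the equation once gives
\[
\dt y+i\dx^2 y
= F_\be[u^{(1)}]\dx y+F_{\cj\be}[u^{(1)}]\cj{\dx y}+\mathcal{E},
\]
where $\mathcal{E}$ collects the terms in which the difference carries at most one derivative. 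Here we write $F[u]=F(u,\dx u,\cj u,\cj{\dx u})$. A typical such term is $(F_\be[u^{(1)}]-F_\be[u^{(2)}])\dx^2u^{(2)}$, bounded in $L^2$ by $\|z\|_{W^{1,\infty}}\|u^{(2)}\|_{H^2}$. The difficulty is that $\|z\|_{W^{1,\infty}}$ is not controlled by $\|z\|_{H^1}$. Instead we use pointwise bounds $|z|+|\dx z|$ times $L^\infty$ factors, where $\dx^2u^{(2)}\in H^{s-2}\subset L^\infty$. This gives $\|\mathcal{E}\|_{L^2}\le C(M)\|z\|_{H^1}$. Hence the main concern is the first-order terms.

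As in \eqref{Ldep}, we introduce $\Ld:=-\frac12\int_0^x\Im F_\be[u^{(1)}]\,dy$, which is bounded by Lemma \ref{lem:gggb}/Proposition \ref{prop:Miz}, and set $Y:=e^{\Ld}y$. The equation for $Y$ then has real first-order coefficient $\Re F_\be[u^{(1)}]$ plus the term $F_{\cj\be}\cj{\dx Y}$, and both are handled by Lemma \ref{lem:ened} with $r=s$ replaced by the $L^2$ level. At $L^2$ level this is just integration by parts, since the coefficients are $W^{1,\infty}$. The remainder contains $(\dt\Ld+i\dx^2\Ld+\dots)Y$. We estimate $\dt\Ld$ in $L^\infty$ exactly as in the proof of Lemma \ref{lem:remain}: we use $\dt\Im F_\be[u^{(1)}]$ and the structure \eqref{eq:condition} to write $-2\dt\Ld=\Nl_1+\Nl_2$. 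Here $\Nl_1$ involves $\dx v^{(1)}=\dx^2 u^{(1)}\in L^\infty$, and $\Nl_2$ is bounded by an $L^1$ primitive. This requires only $u^{(1)}\in L^\infty_TH^s$, $s>\frac52$, and $\dt u^{(1)}\in L^\infty_TH^{s-2}$, which holds by Remark \ref{rem:sold}. The time regularity is justified via the Duhamel representative; we plan a mollification in $x$ (or working with $J_\delta z$ and passing $\delta\to0$) to make the energy identity rigorous.

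Combining, with $\mathcal{D}(t):=\|z(t)\|_{L^2}^2+\|Y(t)\|_{L^2}^2\sim_M\|z(t)\|_{H^1}^2$ (equivalence by boundedness of $\Ld$), we get $\frac{d}{dt}\mathcal{D}\le C(M)\mathcal{D}$. The $L^2$ estimate for $z$ itself is easy, since $\dt z+i\dx^2z=F[u^{(1)}]-F[u^{(2)}]$ is bounded in $L^2$ by $C(M)\|z\|_{H^1}$. Gronwall then gives the claim with $C_1(T,M)=C(M)e^{C(M)T}$, and uniqueness follows with $\phi^{(1)}=\phi^{(2)}$. The main obstacle is rigor: the solutions are only $C_w H^s$, so $y\in H^{s-1}$ and $\dx y$ pairs fine with $y$, but we must regularize (e.g.\ by Friedrichs mollifiers) to differentiate $\|Y\|_{L^2}^2$ in time and control the commutator by the standard Friedrichs lemma with $W^{1,\infty}$ coefficients. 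A second, milder obstacle is ensuring the $\dt\Ld$ bound uses only the weak solution's equation, which holds in $H^{s-2}$ almost everywhere in time.
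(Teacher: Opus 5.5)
Your proposal is correct and follows the paper's proof essentially step for step. It uses the same gauge $e^{\Ld}$ with $\Ld=-\frac12\int_0^x \Im F_\be[u^{(1)}]\,dy$ applied to $\dx(u^{(1)}-u^{(2)})$, the same $L^2$-level energy $\|z\|_{L^2}^2+\|Y\|_{L^2}^2$, pointwise control of the difference terms via $\dx^2 u^{(2)}\in L^\infty$, the $L^\infty$ bound on $\dt\Ld$ through the decomposition $-2\dt\Ld=\Nl_1+\Nl_2$ from Lemma \ref{lem:remain}, integration by parts for the first-order terms, and Gronwall. The only difference is your regularization remark for justifying the energy identity, which the paper handles by simply asserting the differential inequality for almost every $t$.
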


\begin{proof}
Set
\[
v^{(j)}:=\dx u^{(j)},
\quad
F^{(j)}:=F(u^{(j)},v^{(j)},\cj{u^{(j)}},\cj{v^{(j)}})
\]
for $j=1,2$.
We write
\[
\breve u:=u^{(1)}-u^{(2)},
\quad
\breve v:=\dx\breve u.
\]
Since $u^{(1)}$ and $u^{(2)}$ are solutions to \eqref{NLS},
we have
\begin{equation}
\dt \breve u+i\dx^2\breve u
= F^{(1)}-F^{(2)},
\label{NLSdu}
\end{equation}
and
\begin{equation}
\begin{aligned}
\dt\breve v+i\dx^2\breve v
&=F_\be^{(1)}\dx\breve v+F_{\cj\be}^{(1)}\cj{\dx\breve v}
\\
&\quad+(F_\be^{(1)}-F_\be^{(2)})\dx v^{(2)}
+(F_{\cj\be}^{(1)}-F_{\cj\be}^{(2)})\cj{\dx v^{(2)}}+\breve R,
\end{aligned}
\label{NLSdv}
\end{equation}
where
\[
\breve R
:=F_\al^{(1)}\breve v+F_{\cj\al}^{(1)}\cj{\breve v}
+(F_\al^{(1)}-F_\al^{(2)})v^{(2)}
+(F_{\cj\al}^{(1)}-F_{\cj\al}^{(2)})\cj{v^{(2)}}.
\]

Define
\[
\Ld^{(1)}(t,x):=-\frac12\int_0^x\Im F_\be^{(1)}(t,y)dy,
\quad
\breve V:=e^{\Ld^{(1)}}\breve v.
\]
With \eqref{NLSdv},
we have
\begin{equation}
\begin{aligned}
\dt\breve V+i\dx^2\breve V
&=(\Re F_\be^{(1)})\dx\breve V
+F_{\cj\be}^{(1)}\cj{\dx\breve V}
+\breve H+\breve\fR,
\end{aligned}
\label{NLSdW0}
\end{equation}
where
\[
\breve H
:=e^{\Ld^{(1)}}(F_\be^{(1)}-F_\be^{(2)})\dx v^{(2)}
+e^{\Ld^{(1)}}(F_{\cj\be}^{(1)}-F_{\cj\be}^{(2)})\cj{\dx v^{(2)}},
\]
and
\begin{equation}
\begin{aligned}
\breve\fR
&:=
\big(\dt\Ld^{(1)}+i(\dx\Ld^{(1)})^2+i\dx^2\Ld^{(1)}\big)\breve V
\\
&\quad
-(\Re F_\be^{(1)})(\dx\Ld^{(1)})\breve V
-F_{\cj\be}^{(1)}\cj{(\dx\Ld^{(1)})\breve V}
+e^{\Ld^{(1)}} \breve R.
\end{aligned}
\label{wtfR00}
\end{equation}
Set
\begin{equation}
\breve E_{1}(t)
:=
\sqrt{
\|\breve u(t)\|_{L^2}^2
+\|\breve V(t)\|_{L^2}^2}.
\label{Ewt0}
\end{equation}

The argument in the proof of Lemma \ref{lem:gggb} gives
\begin{align*}
\| \Ld^{(1)}(t) \|_{L^\infty}
&\le
C(\| u^{(1)}(t) \|_{H^{s-1}}).
\end{align*}
Consequently,
we have
\begin{equation}
\begin{split}
&\breve E_{1}(t)
\le
C(\|u^{(1)} (t)\|_{H^s})
\|\breve u(t)\|_{H^{1}},
\\
&
\|\breve u(t)\|_{L^2}
+ \|\breve v(t)\|_{L^2}
\le
C(\|u^{(1)} (t)\|_{H^s})\breve E_{1}(t).
\end{split}
\label{diffbd1}
\end{equation}
Since $s>\frac 52$,
the Sobolev embedding yields
\[
\sum_{j=1,2}
(\| u^{(j)} (t) \|_{L^\infty}
+ \| \dx v^{(j)} (t) \|_{L^\infty})
\le C(M).
\]
Since $F(0)=0$,
Remark \ref{rem:sold} gives
\[
u^{(1)} \in
C_w ([0,T];H^s(\R)) \cap AC([0,T]; H^{s-2}(\R)).
\]
The derivation of \eqref{intGep}
remains valid in the distributional sense
when we set $\eps=0$ and replace $u^\eps$ by $u^{(1)}$.
Hence, the estimates obtained in the proof of Lemma \ref{lem:remain}
give
\[
\|\dt\Ld^{(1)}(t)\|_{L^\infty}\le C(M).
\]
Hence,
we have
\begin{equation}
\|F^{(1)}(t)-F^{(2)}(t)\|_{L^2}
+
\|\breve H(t) \|_{L^2}
+
\|\breve\fR (t) \|_{L^2}
\le C(M)\breve E_{1}(t).
\label{diffrem}
\end{equation}

Since we work in $L^2(\R)$,
the commutator terms in the proof of Lemma \ref{lem:ened} vanish.
Hence,
integrating by parts as in the proof of Lemma \ref{lem:ened}
and using
\eqref{Ewt0}, \eqref{NLSdu}, \eqref{NLSdW0}, and \eqref{diffrem},
we obtain
\[
\dt \breve E_{1}(t)
\le C(M)\breve E_{1}(t)
\]
for almost every $t\in(0,T)$.
Moreover, \eqref{diffbd1} at $t=0$ gives
\[
\breve E_{1}(0)
\le C(\|\phi^{(1)}\|_{H^s})
\|\phi^{(1)}-\phi^{(2)}\|_{H^{1}}.
\]
Gronwall's inequality and \eqref{diffbd1} prove the assertion.
\end{proof}

\subsection{Continuous dependence on initial data}
\label{Subsec:code}

We use a Bona-Smith-type approximation as in \cite{BoSm75}
to prove continuous dependence.
First, we prove the following energy estimate.

\begin{proposition}
\label{prop:diffNM}
Let
\begin{equation}
s>\frac52,
\quad
s_0 \in \Big( \frac 52, s \Big),
\quad
\ta := \frac{s-s_0+1}{s-1} \in (0,1).
\label{BS001}
\end{equation}
Assume that $F$ satisfies \eqref{eq:condition}.
Let $u^{(j)}\in C_w ([0,T];H^{s+2}(\R))$ be solutions to \eqref{NLS}
with initial data $\phi^{(j)}\in H^{s+2}(\R)$ for $j=1,2$.
Set
\[
M:=\|u^{(1)}\|_{L_T^\infty H^s}+\|u^{(2)}\|_{L_T^\infty H^s}.
\]
Then,
we have
\begin{equation}
\begin{split}
\|u^{(1)}-u^{(2)}\|_{L_T^\infty H^s}
&\le
C(T,M)
\big(
\|\phi^{(1)}-\phi^{(2)}\|_{H^s}
\\
&\qquad
+
\|u^{(2)}\|_{L_T^\infty H^{s+1}}
\|\phi^{(1)}-\phi^{(2)}\|_{H^1}^\ta
\big).
\end{split}
\label{diffNM}
\end{equation}
\end{proposition}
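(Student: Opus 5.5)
We will run a gauged energy estimate at regularity $s$ for the difference, in the same spirit as Proposition \ref{prop:diffHs-1s}, but now in $H^{s-2}$ for the second derivative. Set $\breve u:=u^{(1)}-u^{(2)}$, $\breve v:=\dx\breve u$, $\breve w:=\dx^2\breve u$. We differentiate the equation \eqref{NLS} for $u^{(1)}$ and $u^{(2)}$ twice and subtract, as in \eqref{weps}. This gives
\[
\dt\breve w+i\dx^2\breve w=F_\be^{(1)}\dx\breve w+F_{\cj\be}^{(1)}\cj{\dx\breve w}+(F_\be^{(1)}-F_\be^{(2)})\dx w^{(2)}+(F_{\cj\be}^{(1)}-F_{\cj\be}^{(2)})\cj{\dx w^{(2)}}+\breve R_w ,
\]
where $\breve R_w$ collects the differences of the terms in \eqref{Reps}. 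We then gauge with $\Ld^{(1)}$ from $u^{(1)}$, setting $\breve W:=e^{\Ld^{(1)}}\breve w$, and consider
\[
\breve E_s^2:=\|\breve u\|_{H^{s-2}}^2+\|\breve v\|_{H^{s-2}}^2+\|\breve W\|_{H^{s-2}}^2 .
\]
By Lemmas \ref{lem:gauge} and \ref{lem:gggb}, $\breve E_s\sim\|\breve u\|_{H^s}$ with constants depending on $M$; here we use that $\dx\Ld^{(1)}\in H^{s-1}$ is bounded by $C(M)$. The regularity $H^{s+2}$ of the solutions justifies all differentiations and pairings rigorously.

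The transport terms are handled as in Proposition \ref{prop:dtEne}. Lemma \ref{lem:ened} controls $(\Re F^{(1)}_\be)\dx\breve W$ and $F^{(1)}_{\cj\be}\cj{\dx\breve W}$ by $C(M)\breve E_s^2$. The term $(\dt\Ld^{(1)})\breve W$ and the other gauge terms are treated as in Lemma \ref{lem:remain}, now with $r=s$, so they give $C(M)\breve E_s$. The low-order difference terms in $\breve R_w$ and in the $\breve u,\breve v$ equations are estimated by Lemma \ref{lem:comp} \eqref{comp2} and Lemma \ref{lem:product}. They are bounded by $C(M)\breve E_s$ plus terms of the form $\|u^{(2)}\|_{H^{s+1}}\|\breve u\|_{W^{1,\infty}}$ whenever a derivative falls on $u^{(2)}$ at top order.

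The main obstacle is the genuinely derivative-losing difference
\[
(F_\be^{(1)}-F_\be^{(2)})\dx w^{(2)} ,
\]
whose $H^{s-2}$ norm needs $u^{(2)}\in H^{s+1}$. By Lemma \ref{lem:product}(i) and \eqref{comp2} it is bounded by
\[
C(M)\bigl(\|F_\be^{(1)}-F_\be^{(2)}\|_{L^\infty}\|u^{(2)}\|_{H^{s+1}}+\|F_\be^{(1)}-F_\be^{(2)}\|_{H^{s-2}}\|\dx w^{(2)}\|_{L^\infty}\bigr).
\]
The second piece is $\les C(M)\breve E_s$, since $\|\dx w^{(2)}\|_{L^\infty}$ requires only $H^{s_0+?}$ regularity. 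One must check this against $s>5/2$, and if necessary place the $L^\infty$ on the difference instead. The first piece is $\les\|u^{(2)}\|_{H^{s+1}}\|\breve u\|_{W^{1,\infty}}$. For this we use $W^{1,\infty}\supset H^{s_0-1}$ and interpolate between $H^1$ and $H^s$:
\[
\|\breve u\|_{H^{s_0-1}}\le\|\breve u\|_{H^1}^{\ta}\|\breve u\|_{H^s}^{1-\ta},\qquad \ta=\frac{s-s_0+1}{s-1},
\]
which is exactly the exponent in \eqref{BS001}. Then $\|\breve u\|_{H^s}^{1-\ta}\le C(M)$, and Proposition \ref{prop:diffHs-1s} gives $\|\breve u\|_{L^\infty_TH^1}\le C(T,M)\|\phi^{(1)}-\phi^{(2)}\|_{H^1}$. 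Altogether,
\[
\dt\breve E_s\le C(M)\breve E_s+C(T,M)\|u^{(2)}\|_{L^\infty_TH^{s+1}}\|\phi^{(1)}-\phi^{(2)}\|_{H^1}^\ta .
\]
Gronwall's inequality combined with $\breve E_s(0)\le C(M)\|\phi^{(1)}-\phi^{(2)}\|_{H^s}$ and the equivalence $\breve E_s\sim\|\breve u\|_{H^s}$ yields \eqref{diffNM}. The same interpolation also handles the analogous $\cj{\dx w^{(2)}}$ term and any difference in $\dt\Ld$-type terms where $u^{(2)}$ carries the top derivative.
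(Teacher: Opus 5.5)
\textbf{There is a genuine gap in your main step.} Your overall architecture matches the paper's proof: the gauge by $\Ld^{(1)}$, the energy $\breve E_s$, Lemma \ref{lem:ened} for the transport terms, Lemma \ref{lem:remain}-type bounds for the gauge remainder, and the $H^1$--$H^s$ interpolation producing $\ta$. The failure is in how you bound $(F_\be^{(1)}-F_\be^{(2)})\dx w^{(2)}$.

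\emph{Where the estimate breaks.} Bounding $\|(F_\be^{(1)}-F_\be^{(2)})\dx w^{(2)}\|_{H^{s-2}}$ by Lemma \ref{lem:product}(i) necessarily produces the term
\[
\|F_\be^{(1)}-F_\be^{(2)}\|_{H^{s-2}}\,\|\dx^3u^{(2)}\|_{L^\infty}.
\]
Controlling $\|\dx^3u^{(2)}\|_{L^\infty}$ needs $H^{\frac72+}$, so it is \emph{not} bounded by $M$ for $\frac52<s\le\frac72$, which is exactly the range of interest. (Your bound does work when $s>\frac72$.)
\begin{itemize}
\item The only available substitute is $\|\dx^3u^{(2)}\|_{L^\infty}\les\|u^{(2)}\|_{H^{s+1}}$, valid since $s>\frac52$. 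The term then becomes $C(M)\|u^{(2)}\|_{H^{s+1}}\|\breve u\|_{H^{s-1}}$.
\item Interpolating $H^{s-1}$ between $H^1$ and $H^s$ only gives the exponent $\frac1{s-1}<\ta$, so \eqref{diffNM} does not follow.
\item The loss is not cosmetic. In the Bona--Smith step it would give $\eta^{-1}(\eta^{s-1})^{1/(s-1)}=O(1)$ instead of $\eta^{s-s_0}\to0$.
\end{itemize}
``Placing the $L^\infty$ on the difference'' is just the other half of the same product bound. It cannot remove this term, because when the difference carries the high frequency, derivatives land on it.

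\emph{The missing idea.} Do not estimate this term in $H^{s-2}$ at all. Work in the energy pairing and split off a commutator. With $f:=e^{\Ld^{(1)}}(F_\be^{(1)}-F_\be^{(2)})$, write
\[
(f\dx w^{(2)},\breve W)_{H^{s-2}}=\big([J^{s-2},f]\dx w^{(2)},J^{s-2}\breve W\big)_{L^2}+\big(fJ^{s-2}\dx w^{(2)},J^{s-2}\breve W\big)_{L^2}.
\]
\begin{itemize}
\item \emph{Commutator term.} Lemma \ref{lem:Com1} with $r=s-2$ and $\frac32+\dl=s_0-1$ bounds it by
\[
\big(\|f\|_{H^{s_0-1}}\|w^{(2)}\|_{H^{s-2}}+\|f\|_{H^{s-1}}\|w^{(2)}\|_{H^{s_0-2}}\big)\|\breve W\|_{H^{s-2}}.
\]
This is at most $C(M)\breve E_s\|\breve W\|_{H^{s-2}}$. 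The point is that the commutator moves the extra derivative from $w^{(2)}$ onto the difference, which can afford it: by \eqref{comp2} and Lemma \ref{lem:gauge}, $\|f\|_{H^{s-1}}\le C(M)\|\breve u\|_{H^s}$.
\item \emph{Remaining term.} It costs only $\|f\|_{L^\infty}\|u^{(2)}\|_{H^{s+1}}$, with $\|f\|_{L^\infty}\le C(M)\|\breve u\|_{W^{1,\infty}}\le C(M)\|\breve u\|_{H^{s_0-1}}$. Only at this point does your interpolation with exponent $\ta$ apply.
\end{itemize}
A paraproduct argument would work equally well: in the high--low interaction, use $\|P_{\ll N}\dx w^{(2)}\|_{L^\infty}\les N\|w^{(2)}\|_{L^\infty}$. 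The $\cj{\dx w^{(2)}}$ term needs the same treatment.
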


\begin{proof}
For $j=1,2$, set
\[
v^{(j)}:=\dx u^{(j)},
\quad
w^{(j)}:=\dx^2 u^{(j)},
\quad
F^{(j)} := F(u^{(j)},v^{(j)},\cj{u^{(j)}},\cj{v^{(j)}}).
\]
We write
\[
\wt u:=u^{(1)}-u^{(2)},
\quad
\wt v:=\dx\wt u,
\quad
\wt w:=\dx^2\wt u,
\]
and define
\[
\Ld^{(1)}(t,x):=-\frac12\int_0^x\Im F_\be^{(1)}(t,y)dy,
\quad
\wt W:=e^{\Ld^{(1)}}\wt w.
\]
As in \eqref{weps},
we write
\[
\dt w^{(j)}+ i\dx^2w^{(j)}
= F_\be^{(j)} \dx w^{(j)} + F_{\cj\be}^{(j)} \cj{\dx w^{(j)}}+R^{(j)},
\]
where $R^{(j)}$ is given by the same expression as \eqref{Reps}
with $u^\eps$ replaced by $u^{(j)}$.
Then,
\[
\dt\wt W+i\dx^2\wt W
=(\Re F_\be^{(1)})\dx\wt W
+F_{\cj\be}^{(1)}\cj{\dx\wt W}
+\wt H+\wt\fR,
\]
where
\[
\wt H
:=
e^{\Ld^{(1)}}(F_\be^{(1)}-F_\be^{(2)})\dx w^{(2)}
+ e^{\Ld^{(1)}}(F_{\cj\be}^{(1)}-F_{\cj\be}^{(2)}) \cj{\dx w^{(2)}},
\]
and $\wt\fR$ is given by the same expression as \eqref{wtfR00},
with $\breve V$ and $\breve R$ replaced by $\wt W$ and $R^{(1)}-R^{(2)}$, respectively.
Set
\begin{equation}
\wt E_s(t)
:=
\sqrt{
\|\wt u(t)\|_{H^{s-2}}^2
+\|\wt v(t)\|_{H^{s-2}}^2
+\|\wt W(t)\|_{H^{s-2}}^2}.
\label{Ewt}
\end{equation}
Arguing as in the derivation of \eqref{diffbd1} (see also \eqref{Enebb}),
we have
\begin{equation}
\wt E_s(t)
\le C(M) \|\wt u(t)\|_{H^s},
\quad
\|\wt u(t)\|_{H^s}
\le C(M)\wt E_s(t).
\label{diffC0}
\end{equation}

Repeating the argument in Lemma \ref{lem:remain} with $\eps=0$ and $r=s$,
and using Lemmas \ref{lem:product} and \ref{lem:comp} together with \eqref{diffC0},
we obtain
\[
\| \wt \fR (t) \|_{H^{s-2}}
\le C(M) \wt E_s(t).
\]
The only term involving a loss of derivatives is $\wt H$.
For the first term in $\wt H$, we write
\[
\begin{aligned}
&\Re \big( e^{\Ld^{(1)}}(F_\be^{(1)}-F_\be^{(2)})\dx w^{(2)},\wt W \big)_{H^{s-2}}
\\
&=
\Re \big( [J^{s-2},e^{\Ld^{(1)}}(F_\be^{(1)}-F_\be^{(2)})]\dx w^{(2)},J^{s-2}\wt W \big)_{L^2}
\\
&\quad+
\Re \big( e^{\Ld^{(1)}}(F_\be^{(1)}-F_\be^{(2)})J^{s-2}\dx w^{(2)},J^{s-2}\wt W \big)_{L^2}
\\
&=: H_1+H_2
.
\end{aligned}
\]
By Lemmas \ref{lem:Com1} and \ref{lem:comp},
together with \eqref{diffC0},
we obtain
\begin{align*}
|H_1(t)|
&\les
\big(
\| e^{\Ld^{(1)}(t)}(F_\be^{(1)}(t)-F_\be^{(2)}(t)) \|_{H^{s_0-1}} \| w^{(2)}(t) \|_{H^{s-2}}
\\
&\qquad
+
\| e^{\Ld^{(1)}(t)}(F_\be^{(1)}(t)-F_\be^{(2)}(t)) \|_{H^{s-1}} \| w^{(2)}(t) \|_{H^{s_0-2}}
\big) \| \wt W(t) \|_{H^{s-2}}
\\
&\le
C(M)\wt E_s(t) \|\wt W(t) \|_{H^{s-2}}.
\end{align*}
By Lemma \ref{lem:comp} and \eqref{diffC0},
we have
\begin{align*}
|H_2(t)|
&\les
\| e^{\Ld^{(1)}(t)} \|_{L^\infty}
\| F_\be^{(1)}(t)-F_\be^{(2)}(t) \|_{H^{s_0-2}} \| w^{(2)}(t) \|_{H^{s-1}} \| \wt W(t) \|_{H^{s-2}}
\\
&\le
C(T,M)
\| u^{(2)}(t) \|_{H^{s+1}}
\| \wt u(t)\|_{H^{s_0-1}}
\|\wt W(t) \|_{H^{s-2}}.
\end{align*}
The conjugate term is treated similarly.
Using \eqref{BS001} and Proposition \ref{prop:diffHs-1s},
we obtain by interpolation
\[
\|\wt u\|_{L_T^\infty H^{s_0-1}}
\les
\|\wt u\|_{L_T^\infty H^1}^{\ta}
\|\wt u\|_{L_T^\infty H^{s}}^{1-\ta}
\le
C(T,M)
\|\phi^{(1)}-\phi^{(2)}\|_{H^1}^{\ta}
.
\]

Thus, Lemma \ref{lem:ened} with \eqref{Ewt} gives
\[
\dt\wt E_s(t)
\le
C(M)\wt E_s(t)
+C(T,M)
\| u^{(2)} \|_{L_T^\infty H^{s+1}}
\| \phi^{(1)}-\phi^{(2)}\|_{H^1}^\ta.
\]
Since \eqref{diffC0} at $t=0$ gives
\[
\wt E_s(0)
\le C(M)\|\phi^{(1)}-\phi^{(2)}\|_{H^s},
\]
Gronwall's inequality proves \eqref{diffNM}.
\end{proof}

Let $\chi\in C_c^\infty(\R)$ satisfy
$0\le\chi\le1$, $\chi(\xi)=1$ on $|\xi|\le1$, and $\supp\chi\subset[-2,2]$.
For $\eta\in(0,1]$, set
\[
\ft{\Pi_\eta f}(\xi):=\chi(\eta\xi)\ft f(\xi).
\]
We record some elementary properties of $\Pi_\eta$.

\begin{lemma}
\label{lem:BoSm}
Let $s \in \R$ and $\phi \in H^s(\R)$.
Then, $\Pi_\eta$ satisfies the following:
\begin{enumerate}
\item
$\Pi_\eta \phi \in H^\infty(\R)$ for $\eta\in(0,1]$.

\item
$\displaystyle\lim_{\eta \downarrow 0}\|\Pi_\eta\phi-\phi\|_{H^s}=0$.

\item
If $r>s$, then $\|\Pi_\eta\phi\|_{H^r}\les\eta^{s-r}\|\phi\|_{H^s}$.

\item
If $r<s$, then $\|\Pi_\eta\phi-\phi\|_{H^r}\les\eta^{s-r}\|\phi\|_{H^s}$.
\end{enumerate}
\end{lemma}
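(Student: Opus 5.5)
All four properties follow from Plancherel's theorem, since $\Pi_\eta$ is the Fourier multiplier with the bounded symbol $\chi(\eta\xi)$, where $0\le\chi\le1$. The plan is to write each norm as a weighted $L^2$ integral of $\ft\phi$ and compare the weights pointwise in $\xi$.

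For (i), the support of $\chi(\eta\cdot)$ lies in $\{|\xi|\le 2/\eta\}$. Hence, for every $k$,
\[
\|\Pi_\eta\phi\|_{H^k}^2=\int_{|\xi|\le 2/\eta}\jb{\xi}^{2k}\chi(\eta\xi)^2|\ft\phi(\xi)|^2d\xi
\le \jb{2/\eta}^{2(k-s)}\|\phi\|_{H^s}^2,
\]
which is valid when $k\ge s$. For $k<s$ the bound $\|\Pi_\eta\phi\|_{H^k}\le\|\phi\|_{H^s}$ is immediate. This gives $\Pi_\eta\phi\in H^\infty(\R)$.

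For (ii), write
\[
\|\Pi_\eta\phi-\phi\|_{H^s}^2=\int(1-\chi(\eta\xi))^2\jb{\xi}^{2s}|\ft\phi|^2d\xi .
\]
The integrand is dominated by $\jb{\xi}^{2s}|\ft\phi|^2\in L^1$, and it tends to $0$ pointwise as $\eta\downarrow0$ because $\chi(\eta\xi)=1$ once $\eta|\xi|\le1$. Dominated convergence then gives the limit.

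For (iii), with $r>s$, the multiplier is supported where $\jb{\xi}\le\jb{2/\eta}\les\eta^{-1}$ (recall $\eta\le1$). On this set $\jb{\xi}^{2r}\le \jb{\xi}^{2s}\jb{2/\eta}^{2(r-s)}\les\eta^{-2(r-s)}\jb{\xi}^{2s}$, and integrating yields the claim.

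For (iv), with $r<s$, the factor $1-\chi(\eta\xi)$ vanishes for $|\xi|\le1/\eta$. On its support we have $\jb{\xi}\ge|\xi|>1/\eta$, so $\jb{\xi}^{2r}=\jb{\xi}^{2s}\jb{\xi}^{-2(s-r)}\le\eta^{2(s-r)}\jb{\xi}^{2s}$. Since $0\le 1-\chi\le1$, integrating gives $\|\Pi_\eta\phi-\phi\|_{H^r}\le\eta^{s-r}\|\phi\|_{H^s}$.

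There is no real obstacle here. The only points needing care are that the constant in (iii) uses $\eta\le1$, through $\jb{2/\eta}\le\sqrt5/\eta$, and that (ii) is genuinely qualitative, with no rate of convergence, which is why dominated convergence is used there rather than a direct estimate.
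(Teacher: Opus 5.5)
Your proof is correct: (i)--(iv) follow from Plancherel's theorem together with your pointwise comparisons of $\jb{\xi}^{2r}$ and $\jb{\xi}^{2s}$ on $\{|\xi|\le 2/\eta\}$ and $\{|\xi|>1/\eta\}$, and dominated convergence gives (ii). The paper records this lemma as elementary and gives no proof, and your argument is the standard verification it implicitly relies on.
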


Finally, we prove local well-posedness.

\begin{theorem}
\label{thm:suff}
Let $s>\frac52$ and assume that $F$ satisfies \eqref{eq:condition}.
Then, the Cauchy problem \eqref{NLS} is locally well-posed in $H^s(\R)$.
\end{theorem}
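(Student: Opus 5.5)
Fix $\psi\in H^s(\R)$ and set $M:=2(\|\psi\|_{H^s}+1)$ and $U:=B_{H^s}(\psi,1)$, so that $\|\phi\|_{H^s}\le M$ for every $\phi\in U$. Let $T=T(M)$ be given by Corollary \ref{cor:enebdur}. For each $\phi\in U$, the compactness argument following Corollary \ref{cor:enebdur} yields a solution $u^{[\phi]}\in C_w([0,T];H^s(\R))$ with $\|u^{[\phi]}\|_{L^\infty_TH^s}\le C_{1,s}(M)\|\phi\|_{H^s}$. By Remark \ref{rem:sold}, every solution in the sense of Definition \ref{def:sol} has a $C_w([0,T];H^s)$ representative, so Proposition \ref{prop:diffHs-1s} applies and gives uniqueness. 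It remains to prove that $U\ni\phi\mapsto u^{[\phi]}\in L^\infty([0,T];H^s(\R))$ is continuous.

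We use a Bona--Smith approximation. Fix $\phi\in U$ and a sequence $\phi_n\to\phi$ in $H^s$ with $\phi_n\in U$. For $\eta\in(0,1]$, the data $\Pi_\eta\phi$ and $\Pi_\eta\phi_n$ lie in $H^\infty$ and have $H^s$ norm at most $\|\phi\|_{H^s}$, resp.\ $\|\phi_n\|_{H^s}$, because $0\le\chi\le1$. Their solutions therefore exist on the common interval $[0,T]$, and by \eqref{persbound} they lie in $L^\infty_TH^{s+2}$ with
\[
\|u^{[\Pi_\eta\phi]}\|_{L^\infty_TH^{s+1}}\le C(M)\|\Pi_\eta\phi\|_{H^{s+1}}\les C(M)\eta^{-1}\|\phi\|_{H^s}
\]
by Lemma \ref{lem:BoSm}(iii). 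Weak continuity in $H^{s+2}$ is obtained as in Remark \ref{rem:sold}. We then split
\[
u^{[\phi_n]}-u^{[\phi]}=(u^{[\phi_n]}-u^{[\Pi_\eta\phi_n]})+(u^{[\Pi_\eta\phi_n]}-u^{[\Pi_\eta\phi]})+(u^{[\Pi_\eta\phi]}-u^{[\phi]}).
\]
The middle term involves two smooth solutions. Proposition \ref{prop:diffNM} bounds it by $C(T,M)\big(\|\phi_n-\phi\|_{H^s}+\eta^{-1}\|\phi_n-\phi\|_{H^1}^\ta\big)$, which tends to $0$ as $n\to\infty$ for each fixed $\eta$.

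The outer terms have to be small uniformly in $n$ as $\eta\downarrow0$, and this is the main obstacle. Proposition \ref{prop:diffNM} needs both solutions in $H^{s+2}$, so it cannot be applied directly to $u^{[\phi]}$. Instead, we compare $\Pi_\eta\phi$ with $\Pi_{\eta'}\phi$ for $0<\eta'<\eta$, where both solutions are smooth and $u^{[\Pi_\eta\phi]}$ plays the role of $u^{(2)}$. Lemma \ref{lem:BoSm}(iv) gives $\|\Pi_\eta\phi-\Pi_{\eta'}\phi\|_{H^1}\les\eta^{s-1}\|\phi-\Pi_{\eta}\phi\|_{H^s}$ up to the trivial tail, so
\[
\eta^{-1}\|\Pi_\eta\phi-\Pi_{\eta'}\phi\|_{H^1}^\ta\les\eta^{\ta(s-1)-1}\,o(1)=\eta^{s-s_0}\,o(1)\to0,
\]
using $\ta(s-1)=s-s_0+1$ from \eqref{BS001}. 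Hence $\{u^{[\Pi_\eta\phi]}\}_\eta$ is Cauchy in $L^\infty_TH^s$ as $\eta\downarrow0$. Its limit is identified with $u^{[\phi]}$ through the $H^1$ Lipschitz bound of Proposition \ref{prop:diffHs-1s}. The resulting modulus of convergence depends only on $M$ and on $\sup_n\|\phi_n-\Pi_\eta\phi_n\|_{H^s}$, and the latter tends to $0$ as $\eta\to0$ uniformly in $n$ because $\{\phi_n\}$ is precompact in $H^s$. Choosing $\eta$ small first and then $n$ large gives $u^{[\phi_n]}\to u^{[\phi]}$ in $L^\infty_TH^s$, which proves continuity and hence local well-posedness.
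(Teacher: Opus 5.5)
Your proposal is correct and follows the paper's proof essentially step for step. It uses the same Bona--Smith splitting via $\Pi_\eta$, and it feeds the bound $\|u^{[\Pi_\eta\phi]}\|_{L^\infty_TH^{s+1}}\lesssim\eta^{-1}$ into Proposition~\ref{prop:diffNM} to get convergence for fixed $\eta$ and the Cauchy property in $\eta$ at rate $\eta^{s-s_0}$. The differences are cosmetic. You identify the limit with $u^{[\phi]}$ through the $H^1$ Lipschitz bound of Proposition~\ref{prop:diffHs-1s}, where the paper passes to the limit in the equation and invokes uniqueness. You get uniformity in $n$ from precompactness of $\{\phi_n\}$, where the paper uses the explicit inequality \eqref{BSdiffd}. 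Your extra $o(1)$ factor is unnecessary, since $\eta^{s-s_0}\to0$ already.
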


\begin{proof}
Fix $\psi\in H^s(\R)$ and $\rho>0$.
Set
\[
M := \| \psi \|_{H^s} + \rho.
\]
By Corollary \ref{cor:enebdur} and the compactness argument above,
there exists $T = T(M)>0$ such that,
for every initial datum $\phi \in H^s(\R)$ with $\| \phi \|_{H^s} \le M$,
the Cauchy problem \eqref{NLS} has a solution on $[0,T]$ satisfying \eqref{persbound} with $r=s$.
Since $B_{H^s}(\psi,\rho) \subset B_{H^s}(0,M)$,
the required existence on $[0,T]$ follows.
Uniqueness follows from Proposition \ref{prop:diffHs-1s}.

Note that
\[
\| \Pi_\eta \phi \|_{H^s}
\le
\| \phi \|_{H^s}
\le M
\]
for $\phi \in B_{H^s}(\psi,\rho)$ and $0<\eta \le 1$.
Thus,
the same time interval $[0,T]$ can be used for the original and smoothed initial data constructed below.

Let $\phi^{(\l)}\to\phi$ in $H^s(\R)$ with
$\phi^{(\l)},\phi\in B_{H^s}(\psi,\rho)$,
and denote the corresponding solutions by $u^{(\l)}$ and $u$.
For fixed $\eta \in (0,1]$, set
\[
\phi_\eta^{(\l)}:=\Pi_\eta\phi^{(\l)},
\quad
\phi_\eta:=\Pi_\eta\phi,
\]
and let $u_\eta^{(\l)}$, $u_\eta$ be the corresponding solutions.
By Proposition \ref{prop:diffNM},
we have
\begin{equation}
\lim_{\l\to\infty}u_\eta^{(\l)}=u_\eta
\quad\text{in }L^\infty([0,T];H^s(\R)).
\label{convNN}
\end{equation}

It follows from \eqref{persbound} and Lemma \ref{lem:BoSm} that
\[
\|u_\eta\|_{L_T^\infty H^{s+1}}
\le C(M)\eta^{-1}\|\phi\|_{H^s}.
\]
Let $s_0 \in (\frac 52,s)$.
By Proposition \ref{prop:diffNM} and Lemma \ref{lem:BoSm},
we have
\[
\| u_{\eta'}-u_\eta \|_{L_T^\infty H^s}
\le
C(M)
\big(
\|\phi_\eta-\phi_{\eta'}\|_{H^s}
+ \eta^{s-s_0}
\big)
\]
for $0<\eta'<\eta\le 1$.
Hence,
the limit $\wt u := \lim_{\eta \downarrow 0} u_\eta$ exists in $L^\infty([0,T];H^s(\R))$.
Passing to the limit in the equation shows that $\wt u$ is an $H^s(\R)$-solution to \eqref{NLS} with initial datum $\phi$.
By Proposition \ref{prop:diffHs-1s}, $\wt u$ agrees with the solution $u$ constructed above.
Therefore,
\begin{equation}
\|u-u_\eta\|_{L_T^\infty H^s}
\le
C(M)
\big(
\|\phi-\phi_\eta\|_{H^s}
+\eta^{s-s_0}
\big).
\label{BSdiffb}
\end{equation}
Similarly,
we have
\begin{equation}
\|u^{(\l)}-u_\eta^{(\l)}\|_{L_T^\infty H^s}
\le
C(M)
\big(
\|\phi^{(\l)}-\phi_\eta^{(\l)}\|_{H^s}
+\eta^{s-s_0}
\big)
\label{BSdiffc}
\end{equation}
for sufficiently large $\l$.

Since
$\phi_\eta - \phi_\eta^{(\l)}
= \Pi_\eta (\phi-\phi^{(\l)})$,
we have
\begin{equation}
\begin{aligned}
\|\phi^{(\l)}-\phi_\eta^{(\l)}\|_{H^s}
&\le
\|\phi-\phi^{(\l)}\|_{H^s}
+ \| \phi - \phi_\eta^{(\l)}\|_{H^s}
\\
&\le
2 \|\phi-\phi^{(\l)}\|_{H^s}
+ \| \phi - \phi_\eta\|_{H^s}.
\end{aligned}
\label{BSdiffd}
\end{equation}
It follows from \eqref{BSdiffb}--\eqref{BSdiffd} that
\begin{align*}
\|u^{(\l)}-u\|_{L_T^\infty H^s}
&\le
\|u^{(\l)}-u_\eta^{(\l)}\|_{L_T^\infty H^s}
+\|u_\eta^{(\l)}-u_\eta\|_{L_T^\infty H^s}
+\|u_\eta-u\|_{L_T^\infty H^s}
\\
&\le
\|u_\eta^{(\l)}-u_\eta\|_{L_T^\infty H^s}
\\
&\quad
+
C(M)
\big(
\|\phi-\phi^{(\l)}\|_{H^s}
+ \| \phi - \phi_\eta\|_{H^s}+\eta^{s-s_0}
\big)
.
\end{align*}
First choose $\eta>0$ sufficiently small and then let $\l\to\infty$.
With \eqref{convNN},
this proves
\[
\lim_{\l\to\infty}u^{(\l)}=u
\quad\text{in }L^\infty([0,T];H^s(\R)).
\]
This concludes the proof.
\end{proof}

\section{Non-existence}
\label{sec:necessity}

In this section,
we prove the non-existence result when at least one of the conditions in \eqref{eq:condition} fails.
At the end of this section,
we complete the proof of Theorem \ref{thm:main}.

\subsection{The constant-term obstruction}

\begin{proposition}
\label{prop:F0}

Let $s>\frac 32$ and assume
\begin{align*}
F(0)\ne0.
\end{align*}
Then, for every
$\phi\in H^s(\R)$,
the Cauchy problem \eqref{NLS} has no solution.
\end{proposition}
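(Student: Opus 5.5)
Suppose, for contradiction, that $u$ is a solution to \eqref{NLS} on some $[0,T]$ in the sense of Definition \ref{def:sol}, with $u\in L^\infty([0,T];H^s(\R))$ and $s>\frac32$. The idea is to test the weak formulation against functions that are spread out in $x$. Since $u$ and $\dx u$ lie in $L^2$, and $u,\dx u$ are small far out in space, the nonlinearity $F(u,\dx u,\cj u,\cj{\dx u})$ will look like the nonzero constant $F(0)$ on most of a large spatial region. The left-hand side, however, is controlled by $L^2$ norms of $u$, so it grows more slowly with the size of the region than the right-hand side.

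Concretely, I will fix $\theta\in C_c^\infty([0,T))$ with $\theta\ge0$ and $\int_0^T\theta\,dt>0$, and $\rho\in C_c^\infty(\R)$ with $\rho\ge 0$ and $\int\rho=1$. For $R\ge1$ I take $\chi_R(t,x):=\theta(t)\rho(x/R)$. By Cauchy--Schwarz, the left-hand side of the weak formulation is bounded by
\[
C\Big(\|u\|_{L^\infty_TL^2}\,\|\rho(\cdot/R)\|_{L^2}+\|\phi\|_{L^2}\,\|\rho(\cdot/R)\|_{L^2}\Big)\les R^{1/2},
\]
because $\dx^2$ acting on $\rho(\cdot/R)$ only brings in extra factors of $R^{-2}$.

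For the right-hand side I will split $F[u]=F(0)+(F[u]-F(0))$. The constant part contributes exactly $F(0)\,R\int\theta\,dt$. For the remainder, Lemma \ref{lem:comp} (specifically \eqref{comp1} with $r=s$) gives $\|F[u](t)-F(0)\|_{L^2}\le C(\|u\|_{L^\infty_TH^s})$, where the constant is uniform in $t$ because $\|u(t)\|_{W^{1,\infty}}\les\|u(t)\|_{H^s}$. By Cauchy--Schwarz this remainder then contributes $O(R^{1/2})$. Measurability in $t$ is harmless, since everything is bounded in $L^\infty_T$.

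Putting the two sides together gives $|F(0)|\,R\int\theta\,dt\les R^{1/2}$. Letting $R\to\infty$ forces $F(0)=0$, which is a contradiction. The only point that needs care is the uniform $L^2$ bound on $F[u]-F(0)$. Since $u\in L^\infty H^s$ with $s>\frac32$, this is provided directly by \eqref{comp1} with the uniform $W^{1,\infty}$ bound, so I do not expect any real obstacle. The argument applies to every $\phi$ and every $T>0$.
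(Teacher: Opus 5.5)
Your argument is correct, but it takes a different route from the paper's.

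The paper removes the constant by setting $\wt u := u - tF(0)$. It then uses the weak formulation to obtain $\wt u(t) = \phi + \int_0^t \big(-i\dx^2 u + \widetilde F(u,\dx u,\cj u,\cj{\dx u})\big)(t')\,dt'$ in $\mathcal S'(\R)$ for almost every $t$. This gives $tF(0) = u(t) - \wt u(t) \in H^{s-2}(\R)$, which is impossible for a nonzero constant.

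You never leave the weak formulation. Testing against the dilated bumps $\theta(t)\rho(x/R)$, the constant part of the nonlinearity pairs to exactly $F(0)\,R\int\theta\,dt$. Every other term is $O(R^{1/2})$ by Cauchy--Schwarz, since $\|\rho(\cdot/R)\|_{L^2} = R^{1/2}\|\rho\|_{L^2}$. This scaling mismatch is a quantitative form of the fact that a nonzero constant is not square integrable, which the paper uses only qualitatively at the $H^{s-2}$ level.

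What your version buys:
\begin{itemize}
\item It needs no passage from the distributional identity to a pointwise-in-time integrated identity, so there is no choice of representative and no Bochner integral in $H^{s-2}$.
\item It uses only $u\in L^\infty([0,T];L^2(\R))$ and a uniform-in-time $L^2$ bound on $F[u]-F(0)$.
\item That $L^2$ bound does not really need Lemma~\ref{lem:comp}. It follows from the mean value theorem, $|F(X)-F(0)|\le C_M|X|$ for $|X|\le M$, together with $\|u(t)\|_{L^2}+\|\dx u(t)\|_{L^2}+\|u(t)\|_{W^{1,\infty}}\les\|u(t)\|_{H^s}$ for almost every $t$.
\end{itemize}

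The paper's version is shorter once the integrated form of the equation is accepted, and it fits the Duhamel framework used elsewhere in the paper.
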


\begin{proof}
Write $F=F(0)+\widetilde F$, where $\widetilde F(0)=0$.
Suppose, for contradiction, that $u\in L^\infty ([0,T];H^s(\R))$ is a solution to \eqref{NLS} with the initial datum $\phi$.
Set
\[
\wt u(t,x) := u(t,x) - t F(0).
\]
Then,
we have
\[
\dt \wt u = -i \dx^2 u + \widetilde F(u,\dx u,\cj u,\cj{\dx u}),
\quad
\wt u(0) = \phi.
\]
Lemma \ref{lem:comp} gives
\[
\widetilde F(u,\dx u,\cj u,\cj{\dx u})\in L^\infty ([0,T];H^{s-1}(\R)).
\]
By the weak formulation,
we have
\[
\wt u(t)
= \phi + \int_0^t \big( -i \dx^2 u + \widetilde F(u,\dx u,\cj u,\cj{\dx u}) \big) (t') dt'
\]
in $\mathcal S'(\mathbb R)$ for almost every $t \in [0,T]$.
Since the right-hand side belongs to $H^{s-2}(\R)$,
this forces
\[
t F(0)
= u(t) - \wt u(t) \in H^{s-2}(\R)
\]
for almost every $t \in [0,T]$.
Since a nonzero constant is not in $H^{s-2}(\R)$,
we have $F(0)=0$.
This contradicts the assumption $F(0) \neq 0$.
\end{proof}

\subsection{Failure of the Mizohata condition}

In the following analysis of the Mizohata obstruction, we assume that
\[
F(0)=0.
\]
By Remark \ref{rem:sold},
we regard
a solution $u$ to \eqref{NLS}
as an $H^s(\R)$-valued weakly continuous function.

\begin{definition}
\label{dfn:As}
For $s>\frac 32$ and $T,M>0$,
define $\Al_s(T,M)$ to be the set of all
$\phi\in H^s(\R)$ admitting a solution $u\in C_w([0,T];H^s(\R))$ to \eqref{NLS}
such that
\[
\| u \|_{L_T^\infty H^s}\le M.
\]
Moreover, we set
\begin{equation}
\Al_s
:=
\bigcup_{m,n \in \N}\Al_s(m^{-1},n).
\label{eq:Nbb3}
\end{equation}
\end{definition}

\begin{remark}
\rm
\begin{enumerate}
\item
No uniqueness of solutions or continuity of a solution map is assumed in the definition of $\Al_s(T,M)$.

\item
$\Al_s$ denotes the set of initial data for which \eqref{NLS} admits a solution on some positive time interval.
\end{enumerate}
\end{remark}

\begin{lemma}
\label{lem:ATM-closed}
Assume that $F(0)=0$ and let $s>\frac 32$.
For every $T,M>0$, the set $\Al_s(T,M)$ is closed in $H^s(\R)$.
\end{lemma}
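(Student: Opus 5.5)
We take a sequence $\phi_n\in\Al_s(T,M)$ with $\phi_n\to\phi$ in $H^s(\R)$, together with corresponding solutions $u_n\in C_w([0,T];H^s(\R))$ satisfying $\|u_n\|_{L_T^\infty H^s}\le M$, and we show that $\phi\in\Al_s(T,M)$. The idea is to extract a limit solution by means of the compactness Lemma \ref{lem:cpt}. The bound needed there is uniform control of the time derivative. Since $F(0)=0$, Remark \ref{rem:sold} shows that each $u_n$ lies in $AC([0,T];H^{s-2}(\R))$ and that
\[
\dt u_n=-i\dx^2u_n+F(u_n,\dx u_n,\cj{u_n},\cj{\dx u_n}).
\]
By \eqref{comp1} in Lemma \ref{lem:comp}, with $r=s$ and with the $W^{1,\infty}$ norm bounded by $C M$ through the Sobolev embedding (valid because $s>\frac32$), we obtain
\[
\|\dt u_n\|_{L_T^\infty H^{s-2}}\le M+C(s,F,M)M=:M_1,
\]
uniformly in $n$.

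Lemma \ref{lem:cpt} then yields a subsequence and a limit $u\in C_w([0,T];H^s(\R))\cap AC([0,T];H^{s-2}(\R))$ with $\sup_t\|u(t)\|_{H^s}\le M$. Moreover $u_n\to u$ and $\dx u_n\to\dx u$ locally uniformly on $[0,T]\times\R$. Since $u_n(0)=\phi_n\to\phi$ in $H^s$, and hence in distributions, we also get $u(0)=\phi$.

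It remains to pass to the limit in the weak formulation of Definition \ref{def:sol}(ii). Fix $\chi\in C_c^\infty([0,T)\times\R)$. The linear terms converge by the weak-$\ast$ convergence in $L^\infty([0,T];H^s(\R))$, and $\int\phi_n\chi(0)\,dx\to\int\phi\chi(0)\,dx$. For the nonlinear term, local uniform convergence of $(u_n,\dx u_n)$, combined with the uniform $L^\infty$ bound $\|u_n\|_{W^{1,\infty}}\les M$, gives
\[
F(u_n,\dx u_n,\cj{u_n},\cj{\dx u_n})\to F(u,\dx u,\cj u,\cj{\dx u})
\]
uniformly on the compact set $\supp\chi$, because $F$ is uniformly continuous on bounded sets. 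Hence the right-hand side converges as well. Therefore $u$ is a solution with $\|u\|_{L_T^\infty H^s}\le M$, and $\phi\in\Al_s(T,M)$.

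The only delicate point is producing the uniform $H^{s-2}$ bound on $\dt u_n$ from the weak formulation, since this is what lets us apply Lemma \ref{lem:cpt}. Remark \ref{rem:sold} together with Lemma \ref{lem:comp} supplies exactly this bound. Note that no uniqueness is needed in the argument, since only the existence of some solution is required for membership in $\Al_s(T,M)$.
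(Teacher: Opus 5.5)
Your proposal is correct and follows essentially the same route as the paper. Like the paper, you bound $\dt u_n$ uniformly in $L_T^\infty H^{s-2}$ via Remark \ref{rem:sold} and \eqref{comp1}, extract a limit with Lemma \ref{lem:cpt}, and pass to the limit in the weak formulation using the local uniform convergence of $(u_n,\dx u_n)$.
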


\begin{proof}
Let $\phi_n\in\Al_s(T,M)$ converge to $\phi$ in $H^s(\R)$, and
choose the corresponding solutions $u_n$ to \eqref{NLS} with the
initial datum $\phi_n$ such that
\[
\sup_{n\in\N}\|u_n\|_{L_T^\infty H^s}\le M.
\]
Since $F(0)=0$,
Remark \ref{rem:sold} and Lemma \ref{lem:comp} give
\begin{align*}
\sup_{n\in\N}
\|\dt u_n\|_{L_T^\infty H^{s-2}}
&\le
\sup_{n\in\N}
\big(
\|u_n\|_{L_T^\infty H^s}
+
\| F(u_n, \dx u_n, \cj{u_n}, \cj{\dx u_n})\|_{L_T^\infty H^{s-1}}
\big)
\\
&\le C(M).
\end{align*}

Since $s>\frac 32$, Lemma \ref{lem:cpt} yields,
after taking a subsequence,
a function $u \in C_w([0,T];H^s(\R))$ such that
\[
\| u \|_{L_T^\infty H^s} \le M,
\quad
u(0) = \phi,
\]
and
\[
u_n\to u,
\quad
\dx u_n\to\dx u
\]
locally uniformly on $[0,T]\times\R$.
Therefore,
\[
F(u_n,\dx u_n,\cj{u_n},\cj{\dx u_n})
\to
F(u,\dx u,\cj u,\cj{\dx u})
\]
locally uniformly on $[0,T]\times\R$.
Thus,
$u$ is a solution to \eqref{NLS} with the initial datum $\phi$
in the sense of Definition \ref{def:sol},
and hence
$\phi\in\Al_s(T,M)$.
Consequently,
$\Al_s(T,M)$ is closed in $H^s(\R)$.
\end{proof}

The following theorem is the key step in proving non-existence.

\begin{theorem}
\label{thm:necessity}
Assume that $F(0)=0$ and that at least one of the following conditions fails:
\[
\Im F_\be(0)=0,
\quad
F_{\cj\al\be}(0)=\cj{F_{\al\be}(0)}.
\]
Then, for $s>\frac 52$ and $T,M>0$,
the interior of $\Al_s(T,M)$ in $H^s(\R)$ is empty.
\end{theorem}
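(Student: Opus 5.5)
We argue by contradiction. Suppose $B_{H^s}(\phi_0,\rho)\subset\Al_s(T,M)$ for some $\phi_0\in H^s(\R)$ and $\rho>0$. By Proposition \ref{prop:Miz}, the failure of the algebraic conditions means that $\Gl_s$ has empty interior. Moreover, the perturbation $\eta\al_\ast e$ with $e=\jb{x}^{-1}$ used there (or the constant-term mechanism when $\Im F_\be(0)\ne0$) allows us to pick a smooth $\psi\in B_{H^s}(\phi_0,\rho/2)$, with $\psi$ and all its derivatives decaying polynomially, such that the real function $\Im a_0$, where $a_0:=F_\be(\psi,\dx\psi,\cj\psi,\cj{\dx\psi})$, has unbounded primitive. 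Replacing $x$ by $-x$ if necessary (which only conjugates roles; alternatively we choose the direction of propagation), we may assume that $\int_{x_1}^{x_2}\Im a_0\,dy$ can be made arbitrarily large of a fixed favourable sign along suitable intervals. For each $N$ we take $\phi_N:=\psi+\delta_N$, with $\delta_N$ small in $H^s$, and let $u_N$ be a solution given by $\phi_N\in\Al_s(T,M)$. Set $w_N:=\dx^2u_N$. Then $w_N$ satisfies \eqref{eqw1}, with coefficients bounded uniformly by Lemma \ref{lem:comp} and $\|u_N\|_{L^\infty_TH^s}\le M$. We also have $\|\dt u_N\|_{L^\infty_TH^{s-2}}\le C(M)$, so $u_N(t)$ stays within $O(t)$ of $\phi_N$ in $H^{s-2}$, and hence within $o(1)$ of $\psi$ in $W^{1,\infty}$ on short times by interpolation.

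Next we construct wave packets. Fix a bump $\chi$ and a large frequency $N$, and let
\[
h_N(t,x)=N^{s-2}\,\chi\big((x-x(t))/\ell\big)\,e^{iN x - iN^2 t}\,e^{\gamma_N(t,x)},
\]
with group velocity $\dot x(t)=-2N$ (so the transit time over the interval $[x_1,x_2]$ is $T_N\sim|x_2-x_1|/N\to0$). The amplitude $\gamma_N$ is chosen by solving the transport (eikonal) equation obtained by requiring $\dt h+(-i\dx^2+a_0\dx)^\ast h=O(\text{lower order})$. At leading order this gives $\dt\gamma-2N\dx\gamma\approx$ a multiple of $N\,a_0$ (conjugated). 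Along characteristics the real part of $\gamma$ therefore changes by $c\int_{x(0)}^{x(T_N)}\Im a_0\,dy$, which we arrange to be $\le -K$ with $K$ large. The normalisation $N^{s-2}$ makes $|(w_N(0),h_N(0))_{L^2}|$ comparable to the $\dot H^{s-2}$ size of a chosen component. We insert into $\delta_N$ a packet $\delta_N=\eps' N^{-s}\chi(\cdot/\ell)e^{iNx}$ (so $\|\delta_N\|_{H^s}\sim\eps'$) so that $|(w_N(0),h_N(0))|\ge c>0$. Meanwhile $\|h_N(T_N)\|_{H^{-(s-2)}}\les e^{-K}$, so $|(w_N(T_N),h_N(T_N))|\le Me^{-K}$.

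We then compute $\frac{d}{dt}(w_N,h_N)_{L^2}$ and show that its time integral over $[0,T_N]$ tends to $0$; this contradicts the lower bound once $K$ is large. The error terms are of several kinds. First, $(F_\be(u_N)-a_0)\dx w_N$ is paired with $h_N$; after moving $\dx$ onto $h_N$ it costs a factor $N\cdot N^{s-2}$ against $\|w_N\|_{H^{s-2}}$, but it is multiplied by $T_N\sim N^{-1}$ and by $\|F_\be(u_N)-a_0\|_{L^\infty}=o(1)$ (small time plus $\delta_N$ small in $W^{1,\infty}$ after a lower-frequency estimate), so the total is $o(1)$. Second, the lower-order remainder $R$ and the transport-equation errors are each $O(1)$ per unit time and so contribute $O(T_N)$. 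Third, and this is the main obstacle, the term $(F_{\cj\be}\cj{\dx w_N},h_N)$ is first order with no small factor. We handle it by nonresonance. Writing $\cj{w_N}$ via its equation, $\cj{\dx w_N}$ oscillates like $e^{+i\dx^2}$ while $h_N$ carries $e^{-iN^2t}$ at frequency $N$; the pairing involves frequency $-N$ content of $\cj{w_N}$, whose phase is $e^{-iN^2t}$ conjugated. Hence the total phase is $e^{\mp 2iN^2t}$, which is nonstationary. We integrate by parts in time: $\int_0^{T_N}$ of this term equals boundary terms plus $\int(\dt\cdot)$, each gaining $N^{-2}$, at the cost of $\dt w_N\in H^{s-4}$ (two derivatives, i.e. $N^2$, lost). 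To make this work we first conjugate, i.e. work with $e^{-it\dx^2}w_N$ so that $\dt$ of the profile is only first order, gaining a net $N^{-1}$; the boundary terms are then $O(N^{-1})$. If time regularity is insufficient, we instead use a normal-form correction $w_N+\mathcal B(\cj{w_N})$ with a bilinear symbol of size $N^{-1}$. Combining the three kinds of error gives $|(w_N(T_N),h_N(T_N))-(w_N(0),h_N(0))|\to0$, which is the desired contradiction.
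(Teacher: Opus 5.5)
Your proposal is correct in outline and follows essentially the same route as the paper. The shared ingredients are a smooth $\psi$ with $\Im a_0$ of definite sign and non-integrable at infinity, an adjoint wave packet whose amplitude $e^{-\cj{\G}}$ decays along its path, a small high-frequency packet in the datum, short-time smallness of $a_N-a_0$, and nonresonant integration by parts in time for the $\cj{\dx w_N}$ term.

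Two corrections to the details. The packet's time phase should be $e^{iNx+iN^2t}$ to match your (correct) group velocity $-2N$. Also, freeze $b_N$ to $b_0=F_{\cj\be}(\psi,\dx\psi,\cj\psi,\cj{\dx\psi})$ before integrating by parts, as the paper does; it then realizes your normal-form step via the exact identity $\frac{d}{dt}(b_0\cj{v_N},h_N)_{L^2}=2i(b_0\dx\cj{w_N},h_N)_{L^2}+O(1)$ with $|(b_0\cj{v_N},h_N)_{L^2}|\les N^{-1}$, avoiding any Fourier-side analysis.
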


In the proof of Theorem \ref{thm:necessity},
we use the following lemma.

\begin{lemma}
\label{lem:mod}

Let $s \in \R$ and $N\ge1$.
For either choice of sign and every $f \in H^s(\R)$,
we have
\begin{align*}
\| e^{\pm iNx} f \|_{H^s}
\les N^{|s|} \| f \|_{H^s}.
\end{align*}
If $s \ge 0$ and $f \in H^{s+1}(\R)$, then the following estimates also hold:
\begin{align*}
N^s
\|e^{\pm iNx} f \|_{H^{-s}}
&\les
\| f \|_{L^2} + N^{-1}\| f \|_{H^{s+1}},
\\
N^s
\|\dx( e^{\pm iNx} f)\|_{H^{-s}}
&\les
N \| f \|_{L^2} + \| f \|_{H^{s+1}}.
\end{align*}
\end{lemma}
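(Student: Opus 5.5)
All three estimates will be proved on the Fourier side. The starting point is that modulation by $e^{\pm iNx}$ shifts the Fourier transform: $\Ft[e^{\pm iNx}f](\xi)=\ft f(\xi\mp N)$. Each bound then reduces to comparing the weight $\jb{\xi}^{\pm s}$ with $\jb{\xi\mp N}^{\pm s}$, or with powers of $N$.

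For the first estimate, we change variables $\xi\mapsto\xi\pm N$ and obtain
\[
\|e^{\pm iNx}f\|_{H^s}^2=\int_\R \jb{\eta\pm N}^{2s}|\ft f(\eta)|^2d\eta .
\]
Peetre's inequality $\jb{\eta\pm N}^{s}\les \jb{N}^{|s|}\jb{\eta}^s$ holds for every $s\in\R$. Combined with $\jb{N}\sim N$ for $N\ge1$, it gives the factor $N^{|s|}$.

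For the second estimate, with $s\ge0$, we write
\[
N^{2s}\|e^{\pm iNx}f\|_{H^{-s}}^2=\int_\R N^{2s}\jb{\eta\pm N}^{-2s}|\ft f(\eta)|^2d\eta
\]
and split into the regions $|\eta|\le N/2$ and $|\eta|>N/2$.
\begin{itemize}
\item On $|\eta|\le N/2$ we have $\jb{\eta\pm N}\ges N$. The weight is then $O(1)$, and this region contributes $\les\|f\|_{L^2}^2$.
\item On $|\eta|>N/2$ we use $\jb{\eta\pm N}^{-2s}\le1$ and $N^{2s}\les N^{-2}\jb{\eta}^{2s+2}$, the latter because $\jb{\eta}\ges N$ there. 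This region contributes $\les N^{-2}\|f\|_{H^{s+1}}^2$.
\end{itemize}
Taking square roots gives the claim.

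For the third estimate we use the product rule
\[
\dx(e^{\pm iNx}f)=\pm iNe^{\pm iNx}f+e^{\pm iNx}\dx f .
\]
Applying the second estimate to each piece gives
\[
N^s\|\dx(e^{\pm iNx}f)\|_{H^{-s}}\les N\|f\|_{L^2}+\|f\|_{H^{s+1}}+\|\dx f\|_{L^2}+N^{-1}\|\dx f\|_{H^{s+1}}.
\]
The last term would require $f\in H^{s+2}$, so instead we treat $\dx f$ directly by the same frequency split. On $|\eta|\le N/2$ the weight is $O(1)$, giving $\|\dx f\|_{L^2}\le\|f\|_{H^{s+1}}$. On $|\eta|>N/2$ the weight is bounded by $N^{2s}\jb{\eta}^2\les\jb{\eta}^{2s+2}$, again giving $\|f\|_{H^{s+1}}^2$. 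Both contributions are therefore $\les\|f\|_{H^{s+1}}$.

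There is no genuine obstacle. The only point requiring care is this last one: estimating $\dx f$ through the split directly, rather than feeding it into the second estimate and paying an extra derivative.
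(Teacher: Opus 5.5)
Your proof is correct and follows essentially the same route as the paper: Peetre's inequality with Plancherel for the first bound, then the frequency split at $|\xi|=N/2$ with the pointwise bounds $\jb{\xi\pm N}^{-s}\sim N^{-s}$ and $\jb{\xi\pm N}^{-s}\les N^{-s-1}\jb{\xi}^{s+1}$ for the other two. Your product-rule treatment of the derivative estimate amounts to splitting the shifted symbol as $|\eta\pm N|\le N+|\eta|$, which the paper's one-line argument uses implicitly.
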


\begin{proof}
Peetre's inequality gives
\[
\jb{\xi+N}^{s}
\les
\jb{\xi}^{s}\jb{N}^{|s|}
\]
for $\xi \in \R$.
With Plancherel's identity,
this gives the first bound.

For the last two bounds, split the frequency region into
$|\xi|\le \frac N2$ and $|\xi|> \frac N2$.
On the first region,
we have
\[
\jb{\xi \pm N}^{-s}
\sim N^{-s}.
\]
On the second,
we have
\[
\jb{\xi \pm N}^{-s}
\les N^{-s-1} \jb{\xi}^{s+1}.
\]
These pointwise bounds and Plancherel's identity
give the assertions.
\end{proof}

\begin{proof}[Proof of Theorem \ref{thm:necessity}]
To show that $\Al_s(T,M)$ has empty interior,
suppose that there exist $\phi \in H^s(\R)$ and $\rho>0$ such that
\begin{equation}
B_{H^s}(\phi,\rho)
\subset
\Al_s(T,M).
\label{eq:AA0}
\end{equation}

\smallskip
\noi\emph{Step 1: choose a smooth function inside the ball.}
Choose $\psi_0\in C_c^\infty (\R)$ with
\[
\|\psi_0-\phi\|_{H^s}< \frac \rho4.
\]
If $\Im F_\be(0)\ne0$, put $\psi := \psi_0$.
Otherwise,
the assumption yields $F_{\cj\al\be}(0) \neq \cj{F_{\al\be}(0)}$.
Set
\[
\al_\ast 
:=
\frac{i}{F_{\al\be}(0) - \cj{F_{\cj\al\be}(0)}}.
\]
Then,
we have
\begin{equation}
\Im(F_{\al\be}(0)\al_\ast  +F_{\cj\al\be}(0) \cj{\al_\ast })
=
\Im \big( (F_{\al\be}(0) - \cj{F_{\cj\al\be}(0)}) \al_\ast  \big)
=1.
\label{kkk1}
\end{equation}
Put
\[
\psi :=\psi_0+\dl\al_\ast \jb{x}^{- \frac 34},
\]
where $\dl>0$ is so small that
\begin{equation}
\|\psi-\phi\|_{H^s}< \frac \rho 2.
\label{eq:AA0a}
\end{equation}
In either case,
we have
$\psi \in H^\infty(\R)$ and $\psi\in B_{H^s}(\phi,\rho)
$.
Set
\begin{equation}
a_0 := F_\be(\psi,\psi',\cj\psi,\cj{\psi'}),
\quad
b_0 := F_{\cj\be}(\psi,\psi',\cj\psi,\cj{\psi'}).
\label{ea:AA-2a}
\end{equation}
Each coefficient is the sum of a constant and a function in $H^\infty (\R)$.
Moreover,
there exist $c>0$, $R \ge 1$, and $\nu\in\{1,-1\}$ such that
\begin{equation}
\nu\Im a_0(x)\ge c x^{- \frac 34}
\label{eq:tail-lower}
\end{equation}
for $x \ge R$.
Indeed,
in the case $\Im F_\be (0) \neq 0$,
$\Im a_0$ is a nonzero constant
outside the support of $\psi_0$.
If $\Im F_\be (0)=0$,
we set
$h(x) := \jb{x}^{-\frac 34}$.
Then,
\[
a_0 = F_\be( \dl\al_\ast  h, \dl\al_\ast h', \dl \cj{\al_\ast } h, \dl \cj{\al_\ast} h')
\]
outside the support of $\psi_0$.
It follows from the Taylor expansion at the origin and \eqref{kkk1} that
\[
\Im a_0(x)
=\dl h(x) + O \big( |h'(x)|+h(x)^2 \big)
=\dl x^{- \frac 34}(1+o(1))
\]
for $x \gg 1$.

\smallskip
\noi\emph{Step 2: construct a wave packet.}
Put
\begin{equation}
d := \frac 1{100(1+\|a_0\|_{L^\infty})}.
\label{AA-rd}
\end{equation}
Fix a nonzero $\chi\in C_c^\infty((-d,d))$.
Let $L,N>0$ be large parameters to be chosen later.
Set
\begin{equation}
\begin{split}
(x_0,y_0)
&:=
\begin{cases}(L,2L), &\text{if } \nu=1,\\
(2L,L),& \text{if } \nu=-1,
\end{cases}
\\
T_N
&:=\frac{L}{2N}.
\end{split}
\label{AA-path}
\end{equation}
We require
\[
L \ge R+d,
\quad
T_N \le \min (T,1).
\]
Note that
\begin{equation}
x_0+2\nu N T_N = y_0,
\label{AA-pa1}
\end{equation}
where $\nu$ is as in \eqref{eq:tail-lower}.

Define
\begin{align}
\G(x)&:=-\frac i2\int_{x_0}^x a_0(y) dy,
\label{AA-test0}
\\
h_N(t,x)&:=N^{s-2} e^{-\cj{\G(x)}}
\chi(x-x_0-2\nu Nt)
e^{-i\nu N(x-x_0)+iN^2t}
\label{AA-test}
\end{align}
for $t \ge 0$.
Note that
\[
\supp h_N(t, \cdot) \subset [L-d,2L+d]
\]
for $t \in [0,T_N]$.

Write
\begin{equation}
A_N (t,x)
:= e^{-\cj{\G (x)}}\chi(x-x_0-2\nu Nt).
\label{AA-testa}
\end{equation}
Let
\[
c_\ast  := 2c (2^{\frac 14}-1)>0,
\]
where $c$ is as in \eqref{eq:tail-lower}.
From \eqref{AA-test0}, \eqref{AA-path}, and \eqref{eq:tail-lower},
we have
\[
\Re \G (y_0)
= \frac 12 \int_{x_0}^{y_0} \Im a_0 (y) dy
\ge c_\ast L^{\frac 14}.
\]
Moreover, \eqref{AA-rd} gives
\begin{align*}
\min_{x \in [L-d,2L+d]} \Re \G(x)
&\ge
\min_{x \in [L-d,L] \cup [2L,2L+d]} \Re \G(x)
\\&
\ge
- \frac d2 \|\Im a_0\|_{L^\infty}
\ge
- \frac 1{200}
.
\end{align*}
From \eqref{AA-testa} and \eqref{AA-pa1},
we obtain
\begin{equation}
\begin{split}
&\sup_{0\le t\le T_N}\|A_N(t)\|_{H^s}\les 1,
\quad
\|A_N(T_N)\|_{H^s}\les
\exp ( - c_\ast   L^{\frac 14} ) .
\end{split}
\label{AA07b}
\end{equation}
Here and hereafter, the implicit constants are independent of $L$, $N$, and $t$,
but may depend on $T$, $M$, $\psi$, and $\chi$.

From \eqref{AA-test} and \eqref{AA-testa},
we have
\begin{equation}
h_N(t,x) = N^{s-2} A_N(t,x) e^{-i\nu N(x-x_0)+iN^2t}.
\label{AA07c}
\end{equation}
Lemma \ref{lem:mod} with \eqref{AA07b} and \eqref{AA07c} gives
\begin{align}
\begin{split}
&\sup_{0 \le t \le T_N}
(\|h_N(t)\|_{H^{-(s-2)}} +N\|h_N(t)\|_{H^{-(s-1)}} )
\les 1,
\\	
&\|h_N(T_N)\|_{H^{-(s-2)}}
\les \exp ( - c_\ast  L^{\frac 14} ).
\end{split}
\label{AA-testbb}
\end{align}
The same bounds hold if $\chi$ is replaced by any of its derivatives in the definition of $h_N$.
Direct differentiation, using $\G'=- \frac i2 a_0$, gives
\begin{equation}
 (\dt+(-i\dx^2+a_0\dx)^\ast )h_N=E_N,
\label{AA-08a}
\end{equation}
where, for either sign of $\nu$,
\begin{align*}
E_N(t,x)
&:=
iN^{s-2} e^{-\cj{\G(x)}}
\chi''(x-x_0-2\nu Nt)e^{-i\nu N(x-x_0)+iN^2t}\\
&\quad
+ \Big( \frac i4\cj{a_0 (x)}^{2}-\frac12 \cj{a_0'(x)} \Big)h_N (t,x).
\end{align*}
Here, $(-i\dx^2+a_0\dx)^\ast = i \dx^2 - \cj{a_0'} - \cj{a_0} \dx$ is the formal adjoint of $-i\dx^2+a_0\dx$ in $L^2(\R)$.
From \eqref{AA-testbb} and Lemma \ref{lem:mod},
we have
\begin{equation}
\sup_{0\le t\le T_N} \|E_N(t)\|_{H^{-(s-2)}}
\les 1.
\label{AA08ad}
\end{equation}

\smallskip
\noi
\emph{Step 3: choose an initial datum.}
Let $\eta>0$ be a small constant to be specified later.
Set
\begin{equation}
\kk_N(x)
:=\eta N^{-s}\chi(x-x_0)e^{-i\nu N(x-x_0)},
\label{AA-09aa}
\end{equation}
where $\nu$ and $x_0$ are as in \eqref{eq:tail-lower} and \eqref{AA-path}, respectively.
Then,
Lemma \ref{lem:mod} yields that
\begin{equation}
\sup _{N \ge 1} \|\kk_N\|_{H^s} \les \eta.
\label{eq:AA09}
\end{equation}
Moreover, a direct calculation yields that
\begin{equation}
\|\kk_N\|_{W^{1,\infty}}
\les
\eta N^{1-s}.
\label{eq:AA09b}
\end{equation}

It follows from \eqref{AA-test} and \eqref{AA-09aa} that
\[
(\kk_N'',h_N(0))_{L^2}
=-\eta\int_{\R} |\chi(x-x_0)|^2e^{-\G(x)} dx
+ O( N^{-1}).
\]
From \eqref{AA-rd} and \eqref{AA-test0},
we have
\[
\sup_{|x-x_0| \le d} |\G(x)|
\le \frac d2 \| a_0 \|_{L^\infty}
\le \frac 1{100}.
\]
Since $\supp \chi \subset (-d,d)$,
there exist $c_0>0$ and $N_0\ge1$, independent of $L$,
such that
\begin{equation}
|(\kk_N'',h_N(0))_{L^2}
|\ge c_0\eta
\label{ea:AA-10}
\end{equation}
for $N\ge N_0$.

For any $\al, \be \in \C$,
we have
\begin{align*}
\max ( |\al+\be|, |\al-\be|)^2
\ge
\frac 12 ( |\al+\be|^2+|\al-\be|^2)
= |\al|^2+ |\be|^2
\ge |\be|^2.
\end{align*}
Hence,
we can choose $\s_N\in\{1,-1\}$ with
\begin{equation}
|(\psi'',h_N(0))_{L^2}
+\s_N (\kk_N'',h_N(0))_{L^2}|
\ge
|(\kk_N'',h_N(0))_{L^2}|.
\label{ea:AA-11}
\end{equation}
Define the single-packet datum
\begin{equation}
\phi_N:
=\psi+\s_N\kk_N.
\label{AA-initial}
\end{equation}
It follows from \eqref{eq:AA0}, \eqref{eq:AA0a}, and \eqref{eq:AA09} that
\begin{equation}
\phi_N
\in B_{H^s}(\phi,\rho)
\subset\Al_s(T,M)
\label{eq:AA11i}
\end{equation}
provided that $\eta$ is sufficiently small.
From \eqref{ea:AA-10} and \eqref{ea:AA-11},
we have
\begin{equation}
|(\phi_N'', h_N(0))_{L^2}|\ge c_0\eta
\label{ea:AA-12}
\end{equation}
for $N \ge N_0$.

\smallskip
\noi\emph{Step 4: estimate a solution.}
By \eqref{eq:AA11i},
there exists a solution
\[
u_N \in C_w([0,T];H^s(\R))
\]
to \eqref{NLS}
with the initial datum $\phi_N$ and
\[
\| u_N \|_{L_T^\infty H^s} \le M.
\]
It follows from \eqref{eq:Duh}, $F(0)=0$, and Lemma \ref{lem:comp} that
\begin{equation}
\begin{split}
\|u_N(t)-\phi_N\|_{H^{s-1}}
&\les
\| e^{-it \dx^2} \phi_N - \phi_N \|_{H^{s-1}}
\\
&\quad
+
\int_0^t \| F(u_N,\dx u_N,\cj{u_N},\cj{\dx u_N}) (t') \|_{H^{s-1}} dt'
\\
&\les
t^{\frac 12}
\end{split}
\label{eq:AA-13}
\end{equation}
for $0<t< \min (T,1)$.

Set
\begin{equation}
v_N:=\dx u_N,
\quad w_N:=\dx^2u_N,
\label{eq:AA-2aa}
\end{equation}
and
\begin{equation}
a_N :=F_\be(u_N,v_N,\cj{u_N},\cj{v_N}),
\quad
b_N :=F_{\cj\be}(u_N,v_N,\cj{u_N},\cj{v_N}).
\label{ea:AA-2}
\end{equation}
As in \eqref{weps},
we write
\begin{equation}
\dt w_N +i \dx^2 w_N =a_N \dx w_N +b_N\dx\cj{w_N}+R_N,
\label{AA-w}
\end{equation}
where $R_N$ is a remainder term satisfying
\begin{equation}
\|w_N\|_{L_T^\infty H^{s-2}}+\|R_N\|_{L_T^\infty H^{s-2}}
\les 1.
\label{eq:AAwR}
\end{equation}

It follows from \eqref{AA-initial}, \eqref{eq:AA09b}, and \eqref{eq:AA-13} that
\begin{align*}
&\| u_N(t) - \psi \|_{L^\infty}
+ \| \dx u_N(t) - \dx \psi \|_{L^\infty}
\\
&\les
\| u_N(t) - \phi_N \|_{H^{s-1}}
+
\| \kk_N \|_{W^{1,\infty}}
\\
&\les
N^{1-s}+t^{\frac 12}.
\end{align*}
By \eqref{ea:AA-2a}, \eqref{ea:AA-2}, and the mean value theorem applied to $F_\be$ and $F_{\cj\be}$,
we have
\begin{equation}
\begin{split}
&
\|a_N(t)-a_0\|_{L^\infty}+\|b_N(t)-b_0\|_{L^\infty}
\\
&\les
\| u_N(t) - \psi \|_{L^\infty}
+ \| \dx u_N(t) - \dx \psi \|_{L^\infty}
\\
&\les
N^{1-s}+t^{\frac 12}.
\end{split}
\label{AA-delta}
\end{equation}

For $T_N\le T$,
from \eqref{AA-08a} and \eqref{AA-w},
we have
\begin{align}
\begin{split}
\frac d{dt} (w_N,h_N)_{L^2}
&=
(-i \dx^2 w_N + a_N \dx w_N + b_N \dx \cj{w_N} +R_N, h_N)_{L^2}
\\
&\quad
+ (w_N,\dt h_N)_{L^2}
\\
&=
((a_N-a_0)\dx w_N,h_N)_{L^2}
+
((b_N-b_0)\dx \cj{w_N},h_N)_{L^2}
\\
&\quad
+ (R_N,h_N)_{L^2}
+ (w_N,E_N)_{L^2}
+ (b_0 \dx \cj{w_N},h_N)_{L^2}
\\
&=:
\I_1
+
\I_2
+
\I_3
+
\I_4
+
\I_5
.
\end{split}
\label{AA17}
\end{align}
Note that
\[
w_N \in C_w([0,T]; H^{s-2}(\R)),
\quad
\dt w_N \in L^\infty ([0,T]; H^{s-4}(\R)).
\]
Since \eqref{AA-test} gives $h_N \in C^1([0,T];H^\infty(\R))$,
we interpret $(w_N,h_N)_{L^2}$ as the duality pairing between $H^{s-4}(\R)$ and $H^{-(s-4)}(\R)$.
Then,
$(w_N,h_N)_{L^2}$ is absolutely continuous on $[0,T_N]$,
and \eqref{AA17} holds for almost every $t\in[0,T_N]$.

We have
\begin{equation}
|I_j(t)| 
\les
N t^{\frac 12} +1
\label{AA18}
\end{equation}
for $j=1,2,3,4$.
Indeed,
Lemmas \ref{lem:mod}, \ref{lem:product}, and \ref{lem:comp} with
\eqref{AA07c}, \eqref{AA07b}, \eqref{AA-delta}, and $s>\frac 52$
yield
\begin{align*}
&\|\dx((\cj{a_N(t)-a_0}) h_N(t))\|_{H^{-(s-2)}}
\\
&\les
N\| (\cj{a_N(t)-a_0}) A_N(t) \|_{L^2}
+\| (\cj{a_N(t)-a_0}) A_N(t) \|_{H^{s-1}}
\\
&\les
N( N^{1-s}+t^{\frac 12})+1
\\
&\les
N t^{\frac 12}+1.
\end{align*}
From \eqref{AA17}, we have
\begin{align*}
|I_1(t)|
&= |(w_N(t),\dx ((\cj{a_N(t)-a_0}) h_N(t)))_{L^2}|
\\
&\le
\| w_N (t) \|_{H^{s-2}} \| \dx ( (\cj{a_N(t)-a_0}) h_N(t)) \|_{H^{-(s-2)}}
\\
&\les
N t^{\frac 12} +1.
\end{align*}
The same calculation yields that
\[
|I_2(t)|
\les
N t^{\frac 12} +1.
\]
It follows from \eqref{AA17}, \eqref{AA-testbb}, and \eqref{eq:AAwR} that
\[
|I_3(t)|
\le
\| R_N(t) \|_{H^{s-2}} \| h_N(t) \|_{H^{-(s-2)}}
\les
1.
\]
Moreover,
\eqref{AA17}, \eqref{eq:AAwR}, and \eqref{AA08ad} give
\[
|I_4(t)|
\le
\| w_N(t) \|_{H^{s-2}} \| E_N(t) \|_{H^{-(s-2)}}
\les 1.
\]
Combining the estimates above,
we obtain \eqref{AA18}.

We now estimate $I_5$ in \eqref{AA17}.
Since $u_N$ is a solution to \eqref{NLS},
\eqref{eq:AA-2aa} and \eqref{ea:AA-2} give
\[
\dt \cj{v_N} = i\dx\cj{w_N}+H_N,
\]
where
\[
H_N
:=\cj{a_N} \cj{w_N}+\cj{b_N} w_N
+\cj{F_\al(u_N,v_N,\cj{u_N},\cj{v_N})} \cj{v_N}
+\cj{F_{\cj\al} (u_N,v_N,\cj{u_N},\cj{v_N})} v_N.
\]
Corollary \ref{cor:comppro}  gives
\begin{equation}
\sup_{0 \le t \le T_N}
\|H_N (t)\|_{H^{s-2}}
\les 1.
\label{AA:20a}
\end{equation}
Using \eqref{AA-08a},
we obtain
\begin{equation}
\begin{split}
\frac d{dt} (b_0 \cj{v_N},h_N)_{L^2}
&=
2i (b_0\dx\cj{w_N},h_N)_{L^2}
+ (Q_N,h_N)_{L^2}
+ (b_0 \cj{v_N},E_N)_{L^2},
\end{split}
\label{eq:AA21b}
\end{equation}
where
\[
Q_N
:=
b_0 H_N +2i b_0' \cj{w_N}
+i b_0'' \cj{v_N}
-a_0 ( b_0\cj{w_N} + b_0' \cj{v_N} ).
\]
It follows from Lemma \ref{lem:product}, \eqref{ea:AA-2a}, and \eqref{AA:20a} that
\[
\sup_{0 \le t \le T_N}
\|Q_N(t)\|_{H^{s-2}}
\les 1.
\]
By \eqref{ea:AA-2a} and \eqref{AA-testbb}, we have
\[
\sup_{0 \le t \le T_N}
|(b_0\cj{v_N(t)},h_N(t))_{L^2}|
\le
\sup_{0 \le t \le T_N}
\|b_0\cj{v_N(t)}\|_{H^{s-1}} \|h_N(t)\|_{H^{-(s-1)}}
\les
N^{-1}.
\]
Moreover,
it follows from \eqref{ea:AA-2a} and \eqref{AA08ad} that
\[
\sup_{0 \le t \le T_N}
|(b_0 \cj{v_N(t)},E_N(t))_{L^2}|
\le
\sup_{0 \le t \le T_N}
\|b_0\cj{v_N(t)}\|_{H^{s-2}} \|E_N(t)\|_{H^{-(s-2)}}
\les
1.
\]
Integrating \eqref{eq:AA21b} over $[0,T_N]$,
we obtain
\begin{equation}
\bigg| \int_0^{T_N} (b_0 \dx \cj{w_N},h_N)_{L^2} dt \bigg|
\les
N^{-1}+T_N.
\label{AA-21}
\end{equation}

Combining \eqref{AA17}, \eqref{AA18}, \eqref{AA-21},
and \eqref{AA-path},
we find
\begin{align}
\begin{split}
&| (w_N(T_N), h_N(T_N))_{L^2} - (w_N(0),h_N(0))_{L^2}|
\\
&=
\bigg|
\int_0^{T_N} \frac d{dt} (w_N,h_N)_{L^2} dt \bigg|
\\
&\les
T_N (N T_N^{\frac 12}+1) + N^{-1} + T_N
\\
&\les
L^{\frac 32}N^{-\frac 12}
.
\end{split}
\label{AA:22}
\end{align}
On the other hand,
from \eqref{AA-testbb},
we have
\begin{equation}
\begin{aligned}
|(w_N(T_N),h_N(T_N))_{L^2}|
&\le
\| w_N(T_N) \|_{H^{s-2}} \| h_N(T_N) \|_{H^{-(s-2)}}
\\
&\les
\exp ( - c_\ast  L^{\frac 14} )
.
\end{aligned}
\label{AA:23}
\end{equation}

\smallskip
\noi\emph{Step 5: choose parameters and conclude.}
Choose $L>0$ sufficiently large and set
\[
N
:=
L^4.
\]
With \eqref{AA-path}, we have
\[
T_N = \frac 1{2L^3} \le \min (T,1)
\]
for $L \gg 1$.

Thus, \eqref{AA:22} and \eqref{AA:23} yield that
\begin{align*}
|(w_N(0),h_N(0))_{L^2}|
&\le
|(w_N(T_N),h_N(T_N))_{L^2}|
\\
&\quad
+
|(w_N(T_N),h_N(T_N))_{L^2}
- (w_N(0),h_N(0))_{L^2} |
\\
&\les
\exp ( - c_\ast  L^{\frac 14} )
+
L^{-\frac 12}
.
\end{align*}
This contradicts \eqref{ea:AA-12}
provided that $L \gg 1$.
Thus,
$\Al_s(T,M)$ has empty interior.
\end{proof}

\begin{corollary}
\label{cor:ABS}
Assume that $F(0)=0$ and that at least one of
\[
\Im F_\be(0)=0,
\quad
F_{\cj\al\be}(0)=\cj{F_{\al\be}(0)}
\]
fails.
Then, for $s>\frac 52$,
the interior of $\Al_s$ in $H^s(\R)$ is empty.
\end{corollary}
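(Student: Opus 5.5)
The plan is a Baire category argument combining Lemma \ref{lem:ATM-closed} and Theorem \ref{thm:necessity}. By definition \eqref{eq:Nbb3}, $\Al_s$ is the countable union of the sets $\Al_s(m^{-1},n)$ over $m,n\in\N$. Lemma \ref{lem:ATM-closed} gives that each $\Al_s(m^{-1},n)$ is closed in $H^s(\R)$, since $F(0)=0$ and $s>\frac52>\frac32$. Theorem \ref{thm:necessity} gives that each has empty interior, applied with $T=m^{-1}$ and $M=n$. Hence each $\Al_s(m^{-1},n)$ is nowhere dense, and $\Al_s$ is a meager set in $H^s(\R)$.

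Since $H^s(\R)$ is a complete metric space, the Baire category theorem says that a meager subset has empty interior. I will argue this concretely. Suppose $\Al_s$ contains an open ball $B_0$. Then $B_0=\bigcup_{m,n}(B_0\cap\Al_s(m^{-1},n))$ is a countable union of relatively closed sets. By Baire's theorem applied to the open subset $B_0$ of a complete metric space, one of these sets, say $B_0\cap\Al_s(m^{-1},n)$, contains a nonempty open subset of $B_0$. That subset is open in $H^s(\R)$, so $\Al_s(m^{-1},n)$ has nonempty interior. This contradicts Theorem \ref{thm:necessity}.

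One point needs a check. Membership in $\Al_s$ means existence of a solution on some $[0,T]$ with $T>0$ and a finite bound $\|u\|_{L_T^\infty H^s}$. This is indeed captured by some $(m^{-1},n)$: shrink $T$ to $m^{-1}\le T$ and take $n\ge\|u\|_{L^\infty_TH^s}$, noting that the restriction of a $C_w([0,T];H^s)$ solution to a subinterval is still a solution. So the union in \eqref{eq:Nbb3} is exactly the set of data admitting some local solution. I expect no real obstacle; the substantive work already lies in Theorem \ref{thm:necessity} and Lemma \ref{lem:ATM-closed}.
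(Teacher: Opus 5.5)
Your proposal is correct and follows essentially the paper's route: each $\Al_s(m^{-1},n)$ is closed by Lemma \ref{lem:ATM-closed} and has empty interior by Theorem \ref{thm:necessity}, and the Baire category theorem then rules out interior for the countable union. The paper phrases this via complements, noting that $H^s(\R)\setminus\Al_s$ is a countable intersection of open dense sets and hence dense, which is equivalent to your meagerness formulation.
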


\begin{proof}
With \eqref{eq:Nbb3},
we have
\[
H^s(\R) \setminus \Al_s
=
\bigcap_{m,n \in \N}
( H^s(\R)\setminus\Al_s(m^{-1},n) ).
\]
By Lemma \ref{lem:ATM-closed} and Theorem \ref{thm:necessity},
each $\Al_s(m^{-1},n)$ is closed and has empty interior in $H^s(\R)$.
The Baire category theorem therefore implies that
$H^s(\R) \setminus \Al_s$ is dense in $H^s(\R)$.
Hence,
$\Al_s$ has empty interior.
\end{proof}

The proof also shows that $H^s(\R) \setminus \Al_s$ is a dense $G_\delta$ subset of $H^s(\R)$.
Moreover,
under the same assumptions as in Corollary \ref{cor:ABS},
for $\phi \in H^s(\R) \setminus \Al_s$ and $T>0$,
there is no distributional solution $u \in C_w([0,T]; H^s(\R))$ to \eqref{NLS}.

\begin{proof}[Proof of Theorem \ref{thm:main}]
The implication (iii)$\implies$(i) follows from Theorem \ref{thm:suff},
and (i)$\implies$(ii) is immediate.

For (ii)$\implies$(iii),
we prove the contrapositive.
If $F(0)\ne0$,
Proposition \ref{prop:F0} shows that there is no $H^s(\R)$-solution to \eqref{NLS} for any initial datum in $H^s(\R)$.
When $F(0)=0$,
Remark \ref{rem:sold} shows that (ii) is equivalent to $\Al_s$ having nonempty interior in $H^s(\R)$.
If $F(0)=0$ but (iii) fails,
then at least one of
\[
\Im F_\be(0)=0,
\quad
F_{\cj\al\be}(0)=\cj{F_{\al\be}(0)}
\]
fails.
Corollary \ref{cor:ABS} gives
that the interior of $\Al_s$ is empty,
which proves the contrapositive.
\end{proof}

\appendix
\section{Strong continuity in time}
\label{SEC:ASCT}

In this appendix, we prove the following proposition.

\begin{proposition}
Let $s>\frac 32$.
Assume that $F(0)=0$ and that \eqref{NLS} is locally well-posed
in $H^s(\R)$ in the sense of Definition \ref{def:wp}.
Let $T>0$ and $U$ be as in Definition \ref{def:wp}.
Then, for every $\phi\in U$, the solution $u^{[\phi]}$,
with the representative specified in Remark \ref{rem:sold},
belongs to $C([0,T];H^s(\R))$.
\end{proposition}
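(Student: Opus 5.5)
The plan is to combine the weak continuity of the representative with continuity of the map $t\mapsto\|u^{[\phi]}(t)\|_{H^s}$. In a Hilbert space, weak convergence together with convergence of norms gives strong convergence. By Remark \ref{rem:sold}, $u:=u^{[\phi]}$ already belongs to $C_w([0,T];H^s(\R))\cap C([0,T];H^{s-1}(\R))$. It therefore suffices to show that $t\mapsto u(t)$ is strongly continuous in $H^s$ at every $t_0\in[0,T]$. Rather than prove norm continuity through an energy identity, which is unavailable at this regularity without the structure of Section \ref{sec:suff}, I will approximate $u$ uniformly in time by strongly continuous functions and pass to the uniform limit.

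First, the approximants. Take $\phi_n\in U\cap H^{s+2}(\R)$ with $\phi_n\to\phi$ in $H^s$; for instance $\phi_n=\Pi_{1/n}\phi$, which lies in $U$ for large $n$ since $U$ is open. By the continuity of the solution map in Definition \ref{def:wp},
\[
\|u^{[\phi_n]}-u\|_{L_T^\infty H^s}\to0 .
\]
Since each $u^{[\phi_n]}-u$ lies in $C_w([0,T];H^s(\R))$, the identity \eqref{NNN1} upgrades the essential supremum to a true supremum over $[0,T]$. So $u^{[\phi_n]}\to u$ uniformly on $[0,T]$ in $H^s$. A uniform limit of $C([0,T];H^s)$ functions is again in $C([0,T];H^s)$, so it remains to show $u^{[\phi_n]}\in C([0,T];H^s)$.

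The key step, and the main obstacle, is to see why the smoother solutions are strongly continuous in $H^s$. We do not know that $u^{[\phi_n]}$ stays in $H^{s+2}$, because well-posedness is only assumed in $H^s$ and persistence of regularity is not part of the hypothesis. I will avoid regularity altogether and use the time-translation structure. Fix $t_0$ and let $\tau\downarrow0$. The function $t\mapsto u(t+\tau)$ solves \eqref{NLS} on $[0,T-\tau]$ with datum $u(\tau)$, but $u(\tau)$ need not lie in $U$, and the interval is shorter than $[0,T]$. These two issues block a direct appeal to uniqueness and continuous dependence. So I will instead exploit continuity of the data-to-solution map at the initial time via a density argument, as follows.

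For $\phi'\in U$, the bound $\|u^{[\phi']}(t)-\phi'\|_{H^s}\le \|u^{[\phi']}(t)-u^{[\phi_n]}(t)\|_{H^s}+\|u^{[\phi_n]}(t)-\phi_n\|_{H^s}+\|\phi_n-\phi'\|_{H^s}$ reduces right-continuity at $t=0$ to the corresponding statement for nice data. For data in $H^\infty$ I expect to use the Duhamel formula \eqref{eq:Duh}. The free part $e^{-it\dx^2}\phi_n$ is strongly continuous in $H^s$. The Duhamel term is bounded in $H^{s}$ by $t$ times $\sup\|F(u^{[\phi_n]},\dots)\|_{H^{s}}$. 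Here persistence of regularity in $H^{s+1}$ on a short interval must be obtained, and I would get it by applying the hypothesis with $s$ replaced by the weaker claim that the $H^s$-solution coincides with a smooth local solution constructed via the weak limits in Lemma \ref{lem:cpt} and uniqueness. If this proves too delicate, the fallback is the following. Continuity at $t=0$ combined with uniqueness and restarting at interior times, using the openness of $U$ and the semigroup property when $u(t_0)\in U$, gives continuity on the whole interval. If $u(t_0)\notin U$, I would shrink to a neighbourhood and use the uniform time $T$ from Definition \ref{def:wp} around $\psi=u(t_0)$.
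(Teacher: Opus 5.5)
Your reduction is sound: if you knew $u^{[\phi_n]}\in C([0,T];H^s(\R))$, then continuity of the solution map together with \eqref{NNN1} would give uniform convergence on $[0,T]$, and the claim would follow. The gap is that this key step is never established, and none of your routes can establish it from Definition~\ref{def:wp} alone.

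Regularizing the \emph{datum} does not help. Nothing in the hypothesis says $u^{[\phi_n]}$ is smoother than $H^s$, so the question for $\phi_n$ is exactly as hard as for $\phi$. Your proposed source of $H^{s+1}$ persistence (a smooth solution obtained from Lemma~\ref{lem:cpt} plus uniqueness) has no input. Lemma~\ref{lem:cpt} only extracts limits from sequences already bounded in the higher norm. Uniform higher-norm bounds for an approximation scheme are what Section~\ref{sec:suff} proves, but only under \eqref{eq:condition} and $s>\frac52$, neither of which is assumed here ($s>\frac32$ only).

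The fallback also fails, for three reasons:
\begin{itemize}
\item Your continuity at $t=0$ was itself reduced to the nice-data case.
\item Definition~\ref{def:wp} gives uniqueness only on the full interval $[0,T]$ and only for data in $U$, so no semigroup/restarting property is available.
\item Applying Definition~\ref{def:wp} at $\psi=u(t_0)$ yields a time $T'$ unrelated to $T-t_0$. Moreover, left-continuity at $t_0$ would require $u(t_0-\delta)$ to be $H^s$-close to $u(t_0)$, which is what you are trying to prove.
\end{itemize}

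The missing idea is to regularize the \emph{solution} rather than the datum, using a symmetry that commutes with the solution map.
\begin{itemize}
\item For $|h|$ small, $\tau_h\phi\in U$. Spatial translation invariance and uniqueness then give $u^{[\tau_h\phi]}=\tau_h u$.
\item Continuity of the solution map therefore yields $\|\tau_h u-u\|_{L_T^\infty H^s}\to0$ as $h\to0$.
\item The spatial mollification $u_\eta=\int_\R\chi(y)\,\tau_{-\eta y}u\,dy$ then satisfies $\|u_\eta-u\|_{L_T^\infty H^s}\le\sup_{|h|\le\eta}\|\tau_h u-u\|_{L_T^\infty H^s}\to0$.
\item Also $u_\eta\in C([0,T];H^s(\R))$, simply because $u\in C([0,T];H^{s-1}(\R))$ by Remark~\ref{rem:sold} and mollification maps $H^{s-1}$ boundedly into $H^s$.
\end{itemize}
Your final step (supremum equals essential supremum by \eqref{NNN1}, and uniform limits of continuous functions are continuous) then completes the proof, with no regularity theory for the equation at all.
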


\begin{proof}
Fix $\phi\in U$ and write $u := u^{[\phi]}$.

\smallskip
\noi\emph{Step 1: spatial translations.}
For $h\in\R$, define
\[
\tau_h f(x):=f(x+h).
\]
Since $U$ is open and
\[
\lim_{h \to 0}\|\tau_h\phi-\phi\|_{H^s}=0,
\]
we have $\tau_h \phi\in U$ for all sufficiently small $|h|$.
For such $h$, set
\[
u_h:=u^{[\tau_h\phi]}.
\]
By the spatial translation invariance of \eqref{NLS},
$\tau_h u$ satisfies
\[
\begin{cases}
\dt(\tau_h u)+i\dx^2(\tau_h u)
=
F(\tau_h u,\dx(\tau_h u),
\cj{\tau_h u},\cj{\dx(\tau_h u)}),
\\
(\tau_h u)(0,x)=(\tau_h \phi)(x).
\end{cases}
\]
Thus, $\tau_h u$ and $u_h$ solve the same Cauchy problem
on $[0,T]$ in the sense of Definition \ref{def:sol}.
Uniqueness gives $u_h=\tau_h u$.
The continuity of the solution map therefore implies
\[
\lim_{h\to0}
\|\tau_h u-u\|_{L_T^\infty H^s}=0.
\]

\smallskip
\noi\emph{Step 2: spatial regularization.}
Choose $\chi\in C_c^\infty((-1,1))$ such that
\[
\chi\ge0,
\quad
\int_\R\chi(x) dx=1.
\]
For $\eta>0$, define
\[
u_\eta(t,x)
:=
\int_\R\frac1\eta
\chi \Big( \frac{x-y}{\eta} \Big) u(t,y) dy
=
\int_\R\chi(y)(\tau_{-\eta y}u)(t,x) dy.
\]
Then,
\[
\begin{aligned}
\|u_\eta-u\|_{L_T^\infty H^s}
&\le
\int_\R\chi(y)
\|\tau_{-\eta y}u-u\|_{L_T^\infty H^s} dy
\\
&\le
\sup_{|h|\le\eta}
\|\tau_h u-u\|_{L_T^\infty H^s}.
\end{aligned}
\]
By Step 1, it follows that
\[
\lim_{\eta \downarrow 0}
\|u_\eta-u\|_{L_T^\infty H^s}=0.
\]

By Remark \ref{rem:sold},
\[
u\in C([0,T];H^{s-1}(\R))
\cap C_w([0,T];H^s(\R)).
\]
Spatial regularization therefore gives
\[
u_\eta\in C([0,T];H^s(\R))
\]
for every $\eta>0$.
Moreover,
$u_\eta-u\in C_w([0,T];H^s(\R))$.
With \eqref{NNN1},
we have
\[
\sup_{t\in[0,T]}\|u_\eta(t)-u(t)\|_{H^s}
=
\|u_\eta-u\|_{L_T^\infty H^s}
\]
for every $\eta>0$.
The right-hand side tends to zero as $\eta\downarrow0$.
Thus, $u$ is the uniform limit on $[0,T]$ of
$H^s(\R)$-valued continuous functions.
Consequently,
$u\in C([0,T];H^s(\R))$.
\end{proof}

\mbox{}

\noi
{\bf 
Acknowledgements.}
This work was
supported by JSPS KAKENHI Grant Number
JP26K00613 and JST SPRING Grant Number JPMJSP2132.

\end{document}